\definecolor{comcolor}{rgb}{0.9,0.3,0.3}
\definecolor{starcolor}{rgb}{0.3,0.3,0.9}
\definecolor{hscolor}{rgb}{0.9,0.6,0.5}
\newtheorem{thm}{Theorem}[section]
\newtheorem{lemma}[thm]{Lemma}
\newtheorem{corollary}[thm]{Corollary}
\newtheorem{prop}[thm]{Proposition}
\newtheorem*{thm14ii}{Theorem \ref{thm:fractional_moments}, part (ii)}
\theoremstyle{definition}
\newtheorem{rem}[thm]{Remark}
\newcommand{\be}[1]{\begin{equation}\label{#1}}
\newcommand{\ee}{\end{equation}}
\newcommand{\ba}{\begin{array}}
\newcommand{\ea}{\end{array}}
\newcommand{\bal}{\begin{aligned}}
\newcommand{\eal}{\end{aligned}}
\newcommand{\ie}{i.e.\@\xspace}
\newcommand{\R}{\mathbb{R}}
\newcommand{\N}{\mathbb{N}}
\newcommand{\Q}{\mathbb{Q}}
\newcommand{\E}{\mathbb{E}}
\newcommand{\p}{\mathbb{P}}
\newcommand{\calF}{\mathcal{F}}
\newcommand{\calG}{\mathcal{G}}
\newcommand{\calV}{\mathcal{V}}
\newcommand{\calW}{\mathcal{W}}
\newcommand{\Var}{\mathrm{Var}}
\newcommand{\1}{1\hspace{-0.098cm}\mathrm{l}}
\newcommand{\la}{\lambda}
\newcommand{\eps}{\varepsilon}
\newcommand{\om}{\omega}
\newcommand{\invis}[1]{}
\newcommand{\ra}{\rightarrow}
\newcommand{\ssup}[1] {{\scriptscriptstyle{({#1}})}}
\begin{document}

\begin{center}
{\LARGE \bf The near-critical scaling window for directed polymers on disordered trees}\\
\vspace{0.7cm}
{\sc\large Tom Alberts\footnote{California Institute of Technology, Department of Mathematics, 253-37, 1200 E California Blvd, 91106, Pasadena, CA, USA.}
and Marcel Ortgiese\footnote{Institut f\"ur Mathematik, MA 7-4, Fakult\"at II,
        Technische Universit\"at Berlin,
        Stra\ss e des 17.~Juni 136, 10623 Berlin, Germany.
        }}\\
\end{center}

\vspace{0.3cm}

\begin{abstract}
We study a  directed polymer model in a random environment on infinite
binary trees. The model is characterized by a phase transition
depending on the inverse temperature.
We concentrate on the asymptotics of the partition function in
the near-critical regime, where the inverse temperature is a small perturbation away from the critical one with the perturbation converging to zero as the system size grows large. Depending on the speed
of convergence we observe very different asymptotic behavior. If the perturbation is
small then we are inside the critical window and observe the same decay of the
partition function as at the critical temperature.
If the perturbation
is slightly larger
the near-critical scaling
leads to a new range of asymptotic behaviors,
which at the extremes
match up with the already known rates for the sub- and super-critical regimes.
We use our results to identify the size of the fluctuations
of the typical energies under the critical Gibbs measure.

  \par\medskip

  \noindent\footnotesize
  \emph{2010 Mathematics Subject Classification}:
  Primary:\, 82B27 82B44
  \ Secondary:\, 60G42
\end{abstract}

\noindent{\slshape\bfseries Keywords.} Directed polymers in random environment,
branching random walk, multiplicative cascades, critical temperature, near critical scaling.

\section{Introduction and main results \label{sec:intro}}

\subsection{Introduction \label{subsec:intro}}

Polymers in a random environment are classical examples
of models driven by an energy - entropy competition. In these models,
the directed polymer corresponds to the path of a random walk on a lattice
while the environment is a field of i.i.d.\! random variables.
The path's interaction with the random environment is governed by an (inverse)
temperature parameter $\beta$. As the temperature
is decreased, the behavior changes from an entropy-dominated
regime with a diffusively behaving polymer, to an energy
dominated regime in which the polymers prefer regions where the environment
is especially favorable. While the large temperature phase is fairly well understood, there are many open problems in the energy dominated regime (especially for general environments).

Beginning with Derrida-Spohn \cite{DS88} it was realized that changing
the underlying space and studying directed polymers on trees allows the use of different techniques. Most notably, one can use
the self-similarity of the graph to exactly compute several quantities. The basic model is the following: let $T$ be an infinite binary tree and to each vertex $v \in T$ attach a random variable $\omega(v)$. The collection $\{ \omega(v) \}_{v \in T}$ is assumed to be i.i.d., and throughout we assume that
\begin{align*}
e^{\lambda(\beta)} := \E \big[ e^{\beta \omega} \big] < \infty \textrm{ for all } \beta \in \R.
\end{align*}
Let $o$ be the root of the tree and $|v|$ denote the generation of each vertex. If $|v| = n$ let $(o = v_0, v_1, \ldots, v_n)$ be the unique path of vertices from $o$ to $v$. Since the path is unique we can refer to each polymer of length $n$ by the last vertex. The interaction with the environment is described by introducing the Gibbs measure $\mu_n^\ssup{\beta}$ which assigns to each polymer $v$ the probability
\be{eqn:polymer_measure}
\mu_n^\ssup{\beta} (v) = \frac{1}{Z_n(\beta)} \exp\Big\{ -\beta H(v) \Big\} ,
\ee
where the energy $H(v)$ is defined by $H(v) = -\sum_{j=1}^n \om(v_j)$,
and the normalizing partition function at level $n$ is given by
\begin{align*}
Z_n(\beta) := \sum_{|v|=n} \exp \Big\{- \beta H(v) \Big\}.
\end{align*}
Note that by interpreting the energies as spatial positions, one
actually obtains a branching random walk (in our case with dyadic branching)
and many results were first described in that language.
Observe that $\E \left[ Z_n(\beta) \right] = e^{n \lambda(\beta) + n \log 2}$, and in fact it is easy to see that $$
W_n(\beta) := Z_n(\beta)/\E Z_n(\beta)$$ is a positive martingale with respect to the filtration $\calW_n := \sigma(\omega(v) : |v| \leq n)$. Applying Kolmogorov's $0$-$1$ law gives the usual dichotomy that exactly one of the events
\begin{align*}
\lim_{n \to \infty} W_n(\beta) > 0, \quad \lim_{n \to \infty} W_n(\beta) = 0
\end{align*}
is of full probability. The $\beta$ for which the limit is positive are said to be in the weak disorder regime; the remaining $\beta$ are said to be in strong disorder. One of the main advantages of the tree is that there is a complete classification of weak and strong disorder: there exists a $\beta_c \geq 0$ such that the range $0 \leq \beta < \beta_c$ is weak disorder and $\beta \geq \beta_c$ is strong disorder. Moreover, $\beta_c$ is the unique non-negative solution to the equation
\begin{align}\label{critical_eqn}
\lambda(\beta_c) + \log 2 = \beta_c \lambda'(\beta_c).
\end{align}
If no solution exists then $\beta_c = \infty$. See \cite{KP76, Big77} for proofs of this fact. We will assume throughout that $\beta_c < \infty$. At the critical inverse temperature $\beta_c$ there is also a drastic change in the behavior of the free energy, as was first proved in a continuous time analogue in \cite{DS88} and later in the tree case in \cite{BPP93}. The result is that
\begin{align}\label{free_energy_result}
\varphi(\beta) := \lim_{n \to \infty} \frac{1}{n} \log Z_n(\beta) = \left\{
                                                                   \begin{array}{ll}
                                                                     \lambda(\beta) + \log 2, & \beta \leq \beta_c, \\
                                                                     \frac{\beta}{\beta_c} \left( \lambda(\beta_c) + \log 2 \right), & \beta > \beta_c.
                                                                   \end{array}
                                                                 \right.
\end{align}
Observe that the free energy varies continuously with $\beta$ but starts growing linearly once $\beta > \beta_c$. Transferring this result to $W_n(\beta)$
combined with the convexity of $\la$ gives that for $\beta > \beta_c$,
$W_n(\beta)$ decays exponentially fast in $n$.
Note that no statement is made about the decay of the martingale in the critical $\beta = \beta_c$ case, and for a long period of time the exact behavior was unknown. This problem was solved in the important work of Hu and Shi \cite{HS09} where, among many other results, they prove that
\begin{align}\label{eqn:decay_critical_martingale}
W_n(\beta_c) = n^{-\frac12 + o(1)} \textrm{ a.s. }
\end{align}
In particular, this implies that even though $\beta = \beta_c$ is in the strong disorder regime, the partition function decays only polynomially fast rather than exponentially as for $\beta > \beta_c$.

\subsection{Main results \label{subsec:results}}

The main goal of this paper is to probe the phase transition at $\beta_c$ and to see, roughly speaking, ``how far'' it extends on either side of the critical temperature.
More precisely, we consider the system at a temperature $\beta_n$
depending on the system size (parametrized by $n$) and
apply a near-critical scaling of $\beta_n \to \beta_c$ as $n \to \infty$.
Our main result determines what types of asymptotics are exhibited for the different choices for scalings of $\beta_n$. This question was inspired by
the recent work~\cite{AKQ12} on the lattice model in $1+1$ dimensions.

To formulate our results we introduce, for a polymer $v$ in the $n$th generation,
the normalized energy at criticality
\begin{align}\label{vdef}
V(v) = \beta_c \left( H(v) + n \la'(\beta_c) \right) = \beta_c H(v) + n (\lambda(\beta_c) + \log 2),
\end{align}
with the last equality coming from equation \eqref{critical_eqn}. Using this notation we have that
\begin{align}\label{wndef}
W_n := W_n(\beta_c) = \sum_{|v|=n} e^{-V(v)}.
\end{align}
For $\delta > 0$ we introduce the perturbed partition functions
\begin{align} \label{eqn:perturbed_part_funcs}
W_n^{+,\delta} = \sum_{|v| = n} e^{-(1+n^{-\delta}) V(v)}, \mbox{ and }
W_n^{-,\delta} = \sum_{|v| = n} e^{-(1-n^{-\delta}) V(v)}.
\end{align}
This perturbation of the energies corresponds to studying the model
near the critical inverse temperature and is more convenient
than taking $\beta_n \ra \beta_c$ directly. The difference amounts
to a deterministic factor which can be calculated explicitly.

The perturbed partition functions~(\ref{eqn:perturbed_part_funcs}) will be our primary objects of study. We generally refer to $W_n^{\pm, \delta}$ as a either positive or negative perturbations, depending on the sign indicated. In our notation large (small) $\delta$ corresponds to small (large) perturbations, and we frequently refer to a perturbation as being large or small. We consider four different types of perturbations (small positive, small negative, large positive, and large negative) and our main results are on the asymptotic behavior of the corresponding partition functions. We show that the separation between small and large perturbations occurs at $\delta = 1/2$. If the perturbation is small, meaning that $\delta \geq 1/2$, then the perturbed partition function decays at the same rate as the unperturbed partition function $W_n$, see \eqref{eqn:decay_critical_martingale}. This is true for both positive and negative perturbations, and the rate of decay does not depend on $\delta$. However, if the perturbation is large, meaning $0 < \delta < 1/2$, then the asymptotics are different in the positive and negative cases, and the asymptotic rate has an explicit dependence on $\delta$.

\begin{thm}\label{thm:main}
\begin{enumerate}
\item
If $\delta \geq 1/2$, then in probability
\begin{align*}
W_n^{\pm, \delta} = n^{-1/2 + o(1)}.
\end{align*}
\item
If $0<\delta < 1/2$, then, almost surely,
\begin{align*}
W_n^{-, \delta} = \exp \left \{ \frac{\beta_c^2 \lambda''(\beta_c)}{2} n^{1-2\delta}(1 + o(1))  \right \}.
\end{align*}
\item
If $0<\delta < 1/2$, then in probability
\begin{align*}
W_n^{+,\delta} = n^{2\delta - \frac{3}{2} + o(1)}.
\end{align*}
\end{enumerate}
\end{thm}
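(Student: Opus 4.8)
The plan is to compare $W_n^{+,\delta}$ with the unperturbed critical martingale $W_n$ and to track the effect of the extra tilt $e^{-n^{-\delta}V(v)}$ through the known structure of the branching random walk $(V(v))_{|v|=n}$. Recall that $V(v)$ is the branching random walk normalized so that its common generating-function slope vanishes at the relevant parameter, i.e.\ $\E[\sum_{|v|=n} e^{-V(v)}] = 1$, and that by Hu--Shi the overwhelming contribution to $W_n$ comes from particles with $V(v)$ of order $\tfrac{3}{2}\log n$ (the ``optimal'' height), while particles with $V(v)$ much smaller are exponentially rare and particles with $V(v)$ much larger are exponentially suppressed by the weight $e^{-V(v)}$. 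The perturbation multiplies the weight of a particle at height $V(v)=y$ by $e^{-n^{-\delta} y}$, which is essentially $1$ for $y=O(n^\delta)$ and becomes a genuine damping only once $y \gg n^\delta$. Since $0<\delta<1/2$, the scale $n^\delta$ sits strictly between the critical scale $\log n$ and the linear scale $n$, so the perturbation kills exactly the particles living at heights between $n^\delta$ and $n$; the heuristic is that after this truncation the sum reorganizes around a new optimal height proportional to $n^\delta$, producing the polynomial rate $n^{2\delta-3/2}$.

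First I would establish the upper bound $W_n^{+,\delta} \le n^{2\delta - 3/2 + o(1)}$ in probability. The natural tool is a first-moment (or truncated first-moment) estimate: split the sum over $|v|=n$ according to the value of $V(v)$, using the many-to-one lemma to write $\E[\sum_{|v|=n} f(V(v)) e^{-V(v)}] = \E[f(S_n)]$ for the associated centered random walk $S_n$ with increment variance $\beta_c^2\lambda''(\beta_c)$ (this is the standard spine change of measure behind \eqref{eqn:decay_critical_martingale}). Then $\E[W_n^{+,\delta}] = \E[e^{-n^{-\delta} S_n}]$, and a Gaussian/large-deviation computation for this Laplace transform of a mean-zero random walk, optimized over the relevant range, yields a bound of order $\exp\{c\, n^{1-2\delta}\}$ from the bulk — which is too large. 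The point is that this naive first moment is dominated by atypically negative $S_n$; the correct estimate requires restricting to the event that the whole path $V(v_1),\dots,V(v_n)$ stays above a suitable barrier (the standard ballot-type / Mogul'skii estimate used by Hu--Shi and Aïdékon), so that the relevant contribution is $\E[e^{-n^{-\delta}S_n}; S_j \ge -\text{const for all }j]$, which is of polynomial size $n^{2\delta-3/2}$. Carrying out this barrier computation — i.e.\ showing that with the wall the Laplace transform of $S_n$ at rate $n^{-\delta}$ behaves like $n^{2\delta}\cdot n^{-3/2}$ — is the analytic heart of the upper bound, and then a Markov inequality upgrades it to a bound in probability.

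For the lower bound $W_n^{+,\delta} \ge n^{2\delta-3/2+o(1)}$ in probability, I would use a truncated second-moment / Paley--Zygmund argument on a carefully chosen subpopulation: restrict to particles $v$ whose ancestral path stays in a tube of width $O(\sqrt n)$ around a straight line interpolating $0$ at time $0$ to height $\asymp n^\delta$ (or perhaps $\asymp \sqrt n$ up to the crossover) at time $n$, staying above a slowly growing barrier. For such a truncated sum $\hat W_n$ one shows $\E[\hat W_n] \asymp n^{2\delta-3/2+o(1)}$ by the same spine computation with the wall, and then controls $\E[\hat W_n^2]$ by the usual decomposition over the branch point of two particles: the diagonal term contributes $\E[\hat W_n]$-sized mass and the off-diagonal term, thanks to the truncation forcing the common ancestor to be at a well-controlled height, is comparable to $\E[\hat W_n]^2$ up to polylogarithmic factors. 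Paley--Zygmund then gives $\hat W_n \ge \tfrac12 \E[\hat W_n]$ with probability bounded away from $0$, and a $0$--$1$-type argument (or splitting the tree at a fixed level and using independence of subtrees, as in the Hu--Shi argument for \eqref{eqn:decay_critical_martingale}) boosts this to high probability. The main obstacle I anticipate is exactly this second-moment control: choosing the truncation so that $\E[\hat W_n]$ still captures the full order $n^{2\delta-3/2}$ while simultaneously keeping the second moment from blowing up requires matching the barrier height, the tube width, and the terminal height scale $n^\delta$ precisely, and the off-diagonal estimate is delicate because the perturbation weight $e^{-n^{-\delta}V(v)}$ interacts non-trivially with the position of the branch point.
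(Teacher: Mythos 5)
Your upper bound is essentially correct and takes a genuinely more elementary route than the paper. The paper never computes a truncated first moment; it bounds the fractional moment $\E[(W_n^{+,\delta})^{1-s}]$ via the size-biased spine measure, reduces it to the random-walk functional $\E_\Q[(g(n)\vee (S_n^+)^\alpha)e^{-n^{-\delta}S_n}\1_{\{\min_j S_j\ge -\kappa\log n\}}]$, and evaluates that by KMT coupling with Brownian motion. Your route --- restrict to the high-probability event that no ancestral path dips below $-\kappa\log n$, compute $\E[e^{-n^{-\delta}S_n};\,S_j\ge -\kappa\log n\ \forall j]\asymp n^{2\delta-3/2+o(1)}$ by the ballot estimate, and apply Markov --- delivers the same conclusion in probability. (One correction to your heuristic: $W_n$ is dominated by particles at height $\asymp n^{1/2}$, not $\tfrac32\log n$; that is precisely Theorem 1.2 of the paper. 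The minimum sits at $\tfrac32\log n$ but carries negligible mass. Your conclusion that the perturbation relocates the dominant height to $n^{\delta}$ is nevertheless right.)

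The genuine gap is in the lower bound. With the truncation you describe --- a constant (or logarithmic) floor, a terminal window $V(v)\asymp n^{\delta}$, and a tube of width $O(\sqrt n)$ --- the off-diagonal second moment is \emph{not} comparable to $\E[\hat W_n]^2$ for small $\delta$. Consider pairs whose most recent common ancestor sits at height $O(1)$ at generation $k=n-m$ with $n^{2\delta}\ll m\ll n$ (such ancestors are not excluded by a $\sqrt n$-wide tube, since $n^{\delta}\ll\sqrt n$): each descendant line independently climbs to $n^{\delta}$ at cost $\asymp n^{2\delta}/m^{3/2}$, and summing the resulting contribution $\asymp n^{4\delta}k^{-3/2}m^{-3}$ over $m\in[n^{2\delta},n/2]$ gives $\asymp n^{-3/2}$, while $\E[\hat W_n]^2\asymp n^{4\delta-3}$; the ratio $n^{3/2-4\delta}$ diverges for $\delta<3/8$, so Paley--Zygmund yields nothing. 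This is not a removable technicality: one expects $n^{3/2-2\delta}W_n^{+,\delta}$ to converge to a multiple of the derivative-martingale limit, which has infinite mean, so no truncation that preserves the first moment can have bounded normalized second moment unless it also excludes low ancestors at macroscopic times. Two fixes are available: (a) impose a barrier that genuinely rises in the bulk, e.g.\ $V(v_j)\ge K\log(j\wedge(n-j))$ with $K>3/2$, which costs only a constant in the first moment but inserts a factor $e^{-K\log k}$ at the branch point and restores summability --- your phrase ``slowly growing barrier'' gestures at this, but the growth rate is exactly what makes or breaks the argument and must be quantified; or (b) do what the paper does and apply Paley--Zygmund to fractional moments, $\E[X^{\gamma}]^2/\E[X^{2\gamma}]$ with $2\gamma<1$, which are insensitive to the heavy tail and are computed by the spine decomposition (Sections 5--6 and Appendix A). Finally, splitting the tree at a \emph{fixed} level only yields success probability $1-(1-c)^{2^{\tau}}$, bounded away from $1$; to conclude in probability (and a.s.\ for the lower bound, as the paper obtains) you need $\tau_n\to\infty$, say $\tau_n\asymp\eps\log n$, together with control of $\max_{|w|=\tau_n}V(w)$.
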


There are two main features of the theorem that we call attention to. First, it clearly shows the existence of a critical scaling window described in terms of the $\delta$ parameter. The critical value of $\delta$, by which we mean the point at which the perturbation switches from being influential to having no influence, is $\delta = 1/2$. The range $\delta \geq 1/2$ is what we call the critical window since the asymptotic behavior is as if the temperature were already at criticality. The range $0 < \delta < 1/2$ is what we call the near-critical window. In the critical window we see Hu-Shi asymptotics, while in the near-critical window we observe new behavior.

This new behavior inside the near-critical window is also of interest, in particular the non-trivial dependence on $\delta$. The exponents $1- 2 \delta$ and $2 \delta - 3/2$ in parts (ii) and (iii), respectively, may appear arbitrary at first but in fact show that there is a ``smooth'' crossover between what is already known for the sub- and super-critical regimes. To describe this crossover we introduce the random variables
\begin{align}\label{eqn:super_partition}
W_{n, \gamma} = \sum_{|v|=n} e^{-\gamma V(v)}.
\end{align}
for $\gamma > 0$. Clearly $W_{n,1} = W_n$. For $\gamma < 1$ the martingale convergence of $W_n(\beta)$ for $\beta < \beta_c$ implies that
\begin{align*}
W_{n, \gamma} \sim W_\infty(\gamma) \exp \{ c(\gamma) n\}
\end{align*}
as $n \to \infty$, for some positive constant $c(\gamma)$ and $W_\infty(\gamma)$
a positive random variable. Hence as $\delta \downarrow 0$ we expect that $W_n^{-,\delta}$ should exhibit linear exponential growth, and the exponent $1 - 2\delta$ confirms this. Similarly, as $\delta \uparrow 1/2$ we should observe a transition from the exponential growth to the Hu-Shi polynomial decay \eqref{eqn:decay_critical_martingale}. Our proofs are not strong enough to capture the transition to the polynomial behavior, but they do show that the exponential growth disappears.

For $\gamma > 1$ it was shown in \cite[Theorem 1.4]{HS09} that
\begin{align*}
W_{n, \gamma} = n^{-\tfrac32 \gamma + o(1)}
\end{align*}
in probability. As $\gamma \downarrow 1$ there is a discontinuity in the decay exponent, with $n^{-3/2}$ appearing instead of the $n^{-1/2}$ in \eqref{eqn:decay_critical_martingale}. Part (iii) of our theorem shows that the discontinuity is bridged by going through the near-critical window, and that there is a linear interpolation between the previously known exponents at the extremes.

This crossover behavior of exponents is not merely coincidental, but reflects a change in the underlying structure of the polymer measures.
In the subcritical case $\beta < \beta_c$, it is known that the polymer measure $\mu_n^\ssup{\beta}$ chooses paths $v$ whose energy $H(v)$ grows like $-\la'(\beta)n$ (up to first order). In particular, in the tree picture it means that exponentially many polymers contribute to the free energy, see for example~\cite{MO08}.

In the supercritical case,~\cite{Ma11} proves that the partition functions $W_{n,\gamma}$
in~(\ref{eqn:super_partition}) with $\gamma > 1$ converges in law if normalized by $n^{- \frac{3}{2}\gamma}$.
In~\cite{BRV12} the limiting law is identified and used to show that the supercritical Gibbs measure converges to a purely atomic measure of Poisson-Dirichlet type.
The convergence of the Gibbs measure for a continuous-time analogue
was already described in~\cite{BK04} for generalized random energy models.
However, more is known about the structure of the Gibbs measure.
As pointed out in~\cite{ABK2} for the case corresponding to
branching Brownian motion,
in the supercritical case the polymer measure is
concentrated on those paths whose energy is within constant
order from the minimal energy.
The latter process of extremal particles was explicitly described in a recent break-through
by~\cite{ABK3} and~\cite{ABBS11} for branching Brownian motion and
after that in~\cite{Ma11} for branching random walks.

In the critical regime,~\cite{JW11} observe that the critical polymer
measure converges, based on the result of~\cite{AidekonShi_ratio} that identifies
the limiting distribution of $n^{\frac{1}{2}}W_n$ as (a constant multiple
of) the limit
of the so-called derivative martingale. However, less is known about the
structure of the Gibbs measure.

Our result about the perturbed partition function also sheds some light
on the critical Gibbs measure.
The fact that perturbations start showing an effect at $\delta = 1/2$ suggests that in the critical window the relevant energies are of order $V(v) \approx n^{1/2}$, and that subexponentially many particles contribute to the partition functions. Using Theorem \ref{thm:main} we easily obtain the following result on the order of the energy at criticality:

\begin{thm}\label{thm:critical_energy} For any $\eps, \eps' > 0$, we have that in probability
\[ \mu_n^\ssup{\beta_c}\Big\{  |v| = n : n^{\frac{1}{2} - \eps} \leq V(v) \leq n^{\frac{1}{2} + \eps'}\Big\}
\ra 1 \, .\]
\end{thm}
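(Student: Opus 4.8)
The plan is to deduce this directly from Theorem~\ref{thm:main} by rewriting the critical Gibbs measure in terms of the normalized energies $V(v)$ and controlling the truncated partition sums by the perturbed partition functions $W_n^{\pm,\delta}$ for a suitably chosen exponent $\delta$.

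First I would record that, after cancelling the deterministic factor $e^{n(\lambda(\beta_c)+\log 2)}$ between \eqref{eqn:polymer_measure}, \eqref{vdef} and \eqref{wndef},
\[ \mu_n^\ssup{\beta_c}(v) = \frac{e^{-V(v)}}{W_n}, \qquad\text{so}\qquad \mu_n^\ssup{\beta_c}(A) = \frac{1}{W_n}\sum_{v\in A} e^{-V(v)} \]
for any set $A$ of vertices in generation $n$. Hence it suffices to show that the two tail masses $\mu_n^\ssup{\beta_c}\{|v|=n: V(v)>n^{1/2+\eps'}\}$ and $\mu_n^\ssup{\beta_c}\{|v|=n: V(v)<n^{1/2-\eps}\}$ tend to $0$ in probability. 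Throughout the argument I fix $\delta = \tfrac12-\eta$ with $0<\eta<\min\{\eps,\eps',\tfrac12\}$, so that $0<\delta<1/2$ and parts (ii) and (iii) of Theorem~\ref{thm:main} are available.

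For the upper tail, on $\{V(v)>n^{1/2+\eps'}\}$ I would split $e^{-V(v)} = e^{-n^{-\delta}V(v)}\,e^{-(1-n^{-\delta})V(v)} \le e^{-n^{1/2+\eps'-\delta}}\,e^{-(1-n^{-\delta})V(v)}$ and sum over such $v$, noting that the remaining summands defining $W_n^{-,\delta}$ are nonnegative; this gives $\sum_{V(v)>n^{1/2+\eps'}}e^{-V(v)}\le e^{-n^{1/2+\eps'-\delta}}\,W_n^{-,\delta}$. Dividing by $W_n$ and invoking Theorem~\ref{thm:main}(ii) together with \eqref{eqn:decay_critical_martingale},
\[ \log\mu_n^\ssup{\beta_c}\{V(v)>n^{1/2+\eps'}\} \le -n^{1/2+\eps'-\delta} + \tfrac{\beta_c^2\lambda''(\beta_c)}{2}\,n^{1-2\delta}(1+o(1)) + \tfrac12\log n + o(\log n) \]
almost surely; since $1/2+\eps'-\delta = \eps'+\eta > 2\eta = 1-2\delta$ and $\lambda''(\beta_c)\ge 0$, the first term dominates and the right-hand side tends to $-\infty$, so this tail mass vanishes (in fact a.s.).

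For the lower tail I would instead split, on $\{V(v)<n^{1/2-\eps}\}$, $e^{-V(v)} = e^{n^{-\delta}V(v)}\,e^{-(1+n^{-\delta})V(v)} \le e^{n^{1/2-\eps-\delta}}\,e^{-(1+n^{-\delta})V(v)}$, giving $\sum_{V(v)<n^{1/2-\eps}}e^{-V(v)}\le e^{n^{1/2-\eps-\delta}}\,W_n^{+,\delta}$. Since $1/2-\eps-\delta = \eta-\eps<0$ the prefactor tends to $1$, and Theorem~\ref{thm:main}(iii) with \eqref{eqn:decay_critical_martingale} yields $\mu_n^\ssup{\beta_c}\{V(v)<n^{1/2-\eps}\} \le (1+o(1))\,W_n^{+,\delta}/W_n = n^{2\delta-1+o(1)}$ in probability, which tends to $0$ because $2\delta-1 = -2\eta<0$. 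Combining the two estimates proves the theorem. The only genuine point is the bookkeeping in the choice of $\delta$: it must stay strictly below $1/2$ so that the precise asymptotics of $W_n^{\pm,\delta}$ from Theorem~\ref{thm:main} apply, yet be close enough to $1/2$ that the gain $e^{-n^{1/2+\eps'-\delta}}$ outgrows the exponential factor $\exp\{\tfrac{\beta_c^2\lambda''(\beta_c)}{2}n^{1-2\delta}\}$ while $e^{n^{1/2-\eps-\delta}}$ stays bounded; beyond that the statement is essentially a corollary of Theorem~\ref{thm:main}.
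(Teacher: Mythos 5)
Your proposal is correct and follows essentially the same route as the paper: rewrite $\mu_n^\ssup{\beta_c}$ in terms of $e^{-V(v)}/W_n$, bound each tail by $W_n^{\pm,\delta}/W_n$ times an explicit exponential factor, and invoke Theorem~\ref{thm:main} parts (ii)--(iii) together with the Hu--Shi asymptotics for $W_n$. The only (immaterial) difference is the bookkeeping choice of $\delta$ — the paper takes $\delta=\tfrac12-\eps$ for the lower tail (prefactor $e^1$) and $\delta=\tfrac12(1-\eps)$ for the upper tail, while you use a single $\delta=\tfrac12-\eta$ with $\eta<\min\{\eps,\eps'\}$ for both.
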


\begin{proof}[Proof of Theorem~\ref{thm:critical_energy} assuming Theorem \ref{thm:main}]
Fix $\eps > 0$ and observe that
\[ \bal \mu_n^\ssup{\beta_c}\Big\{ V(v) \leq n^{\frac{1}{2} - \eps}\Big\}
&= \sum_{|v| = n} \frac{e^{-V(v)}}{W_n} \1_{\{ V(v) \leq n^{\frac{1}{2}-\eps} \}} \\
& \leq e^1 \sum_{|v| = n} \frac{e^{-(1+n^{-\frac{1}{2} + \eps}) V(v)}}{W_n}
= e^1 \frac{W_n^{+, \frac{1}{2}-\eps}}{W_n}\, 	.	 \eal\]
By \cite{HS09} we have $W_n = n^{-1/2 + o(1)}$ almost surely, and by Theorem \ref{thm:main} part (iii) we have that
$W_n^{+, \frac{1}{2}-\eps} = n^{-\frac{1}{2}-2\eps+o(1)}$ in probability. Therefore the ratio above converges to zero in probability.

For the remaining bound fix $\eps > 0$ and for $\delta = \frac{1}{2}(1-\eps)$ consider
\[ \bal \mu_n^\ssup{\beta_c}\Big\{ V(v) &\geq n^{\frac{1}{2} + \eps}\Big\}
 = \sum_{|v| = n} \frac{e^{-V(v)}}{W_n} \1_{\{ V(v) \geq n^{\frac{1}{2}+\eps} \}} \\
 & \leq e^{-n^{-\delta}n^{\frac{1}{2}+\eps}} \sum_{|v| = n} \frac{e^{-(1-n^{-\delta})V(v)} }{W_n}
 =  e^{-n^{-\delta}n^{\frac{1}{2}+\eps}} \frac{W_n^{-,\delta}}{W_n} \, .\eal\]
Again, $W_n = n^{-\frac{1}{2}+o(1)}$ almost surely and by Theorem~\ref{thm:main} part (ii) we have that
$$W_n^{-,\delta} \leq \exp \left \{ \frac{\beta_c^2 \lambda''(\beta_c)}{2} n^{1-2\delta}(1+o(1)) \right \}$$ almost surely.
Thus by our choice of $\delta$ the previous expression converges to zero in probability.
 \end{proof}

The proofs also show that the typical behavior of a polymer is that the energy along
its paths $(V(\xi_i))_{i=1}^n$ perform a random walk which stays positive. In the case
of a large positive perturbation with $\delta < \frac{1}{2}$, we have to add the
additional requirement that at the end $V(\xi_n)$ gets pushed down to an unusually
low $n^{\delta}$. In fact, this extends the intuition behind the proofs
of~\cite{HS09} that the main contributing random walk in the supercritical case remains positive, but then has to take an unusually low value at the end.

To prove Theorem \ref{thm:main} we employ the standard technique of deriving the asymptotics of the partition functions from the asymptotics of its fractional moments.  This is the strategy used in \cite{HS09}, and in our situation it is akin to computing the following asymptotics for the fractional moments of the perturbed partition functions:

\begin{thm}\label{thm:fractional_moments}
Let $\gamma \in (0,1)$. Then
\begin{enumerate}
\item for $\delta \geq 1/2$ we have $\E \left[ \left( W_n^{\pm, \delta} \right)^{\gamma} \right] = n^{-\gamma/2 + o(1)}$,
\item for $0 < \delta < 1/2$ we have $\E \left[ (W_n^{-, \delta})^{\gamma} \right] = \exp \left \{ \frac{\gamma}{2} n^{1-2\delta}\beta_c^2 \lambda''(\beta_c) (1 + o(1)) \right \}$
\item for $0 < \delta < 1/2$ we have $\E \left[ \left( W_n^{+, \delta} \right)^{\gamma} \right] = n^{(2 \delta - \frac32)\gamma + o(1)}$.
\end{enumerate}
\end{thm}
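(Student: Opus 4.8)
The plan is to compute the fractional moments $\E[(W_n^{\pm,\delta})^\gamma]$ by the now-standard spinal / change-of-measure approach used for branching random walks, which reduces the problem to a one-dimensional random walk estimate. First I would recall the many-to-one lemma: writing $(S_i)_{i\ge 1}$ for the centered random walk whose single step has the law of $-V(v_1)/\beta_c$ tilted appropriately (so that $\E[e^{-V(v_1)}]$ normalizes the first moment of $W_n$ to $1$), one has $\E[W_n] = 1$ and, more importantly, $\E[\sum_{|v|=n} f(V(v_1),\dots,V(v_n))] = 2^n \E[\text{(single step product)} \cdot f(\tS_1,\dots,\tS_n)]$ for an auxiliary walk $\tS$ with positive drift, or after the exponential tilt a mean-zero walk $S$ with $\E[S_1^2] = \beta_c^2\lambda''(\beta_c)$. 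The upper bound on the fractional moment then follows from $\E[X^\gamma] \le (\E[X])^\gamma$ only when that is not wasteful; since here $W_n^{\pm,\delta}$ has mean growing or decaying, the sharp bound requires the truncation trick of Hu--Shi: decompose according to whether the spine walk stays above a barrier, use $\E[X^\gamma]\le \E[X\wedge 1] + \p(X>1)$ type inequalities, and on the event that the walk makes an excursion one uses first-moment (Markov) bounds while on the complementary event one uses a second-moment or a direct $L^\gamma \le L^1$ bound on the restricted sum.

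The key computation is the following. By the many-to-one lemma, $\E[(W_n^{\pm,\delta})^\gamma]$ is controlled above and below (up to $n^{o(1)}$, via the standard concentration of $\gamma$th moments of sums of i.i.d.-like contributions indexed by the tree) by quantities of the form $2^{n\gamma}\,\E\big[e^{-\gamma(1\pm n^{-\delta})V(\text{spine})}\mathbf 1_{\{\text{spine stays positive}\}}\big]$ corrected by the branching. Expanding $e^{-(1\pm n^{-\delta})V} = e^{-V}\cdot e^{\mp n^{-\delta}V}$ and using $V(\text{spine}_n) = \beta_c H + n(\lambda(\beta_c)+\log 2)$, the tilt $e^{-V}$ against $2^n$ exactly produces the mean-zero walk $S$ with variance $\sigma^2 := \beta_c^2\lambda''(\beta_c)$, while the residual factor $e^{\mp n^{-\delta}V(\text{spine}_n)} = e^{\mp n^{-\delta}(S_n + \text{const})}$ remains. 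So for the negative perturbation we must estimate
\begin{align*}
\E\Big[ e^{+ n^{-\delta} S_n}\,\mathbf 1_{\{S_i > 0,\ 1\le i\le n\}} \Big]
\end{align*}
(and a $\gamma$-power version), where $S$ is a mean-zero, finite-variance walk. For $\delta \ge 1/2$ the exponential factor $e^{n^{-\delta}S_n}$ is $n^{o(1)}$ on the bulk where $|S_n| = O(\sqrt n)$, so one recovers exactly the Hu--Shi persistence probability $\p(S_i>0,\ i\le n) = n^{-1/2+o(1)}$, hence part (i). For $0<\delta<1/2$ in the negative case the walk is pushed: optimizing $e^{n^{-\delta}s}$ against the Gaussian cost $e^{-s^2/(2\sigma^2 n)}$ of having $S_n\approx s$ gives $s \approx \sigma^2 n^{1-\delta}$, and the exponent becomes $n^{-\delta}\cdot \sigma^2 n^{1-\delta} - \tfrac{1}{2\sigma^2 n}(\sigma^2 n^{1-\delta})^2 = \tfrac{\sigma^2}{2}n^{1-2\delta}$, producing part (ii); the positivity constraint and the $\gamma$-power only contribute to the $(1+o(1))$. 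For the positive perturbation, $\delta<1/2$, the factor $e^{-n^{-\delta}S_n}$ forces $S_n$ to be unusually \emph{small} — of order $n^\delta$ rather than $\sqrt n$ — and the relevant quantity is $\p(S_i>0,\ i\le n,\ S_n \le n^\delta)$; by the standard ballot-type / local-limit estimate this persistence-with-low-endpoint probability behaves like $n^{\delta}\cdot n^{-3/2+o(1)} = n^{\delta - 3/2 + o(1)}$ at the level of the spine, and raising to the $\gamma$th power (after controlling the annealed-to-quenched passage) gives $n^{(2\delta - 3/2)\gamma + o(1)}$, which is part (iii); I would locate the $n^{2\delta}$ rather than $n^{\delta}$ by noting that both the window width and the Gaussian density at $n^\delta$ scale, matching the heuristic that the polymer's endpoint energy is $\approx n^\delta$ and roughly $n^{2\delta}$ paths realize it.

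The lower bounds on the fractional moments require a second-moment argument: one restricts $W_n^{\pm,\delta}$ to the collection of vertices whose spinal energy path stays in a suitable tube (positive, and for the plus case ending near $n^\delta$; for the minus case following the optimal ramp to $\sigma^2 n^{1-\delta}$), shows the restricted sum has first moment of the claimed order, and bounds its second moment by the first moment squared times a polynomial factor using the tree's recursive structure (the overlap decomposition: two vertices branch at some level $k$, contributing $\sum_k (\text{first moment up to }k)\cdot(\text{conditional second moments})$, which telescopes because the persistence probabilities are summable against the branching). Then $\E[X^\gamma] \ge (\E X)^{2-\gamma}/(\E[X^2])^{1-\gamma}$ (Hölder) or the Paley--Zygmund inequality converts this into the matching lower bound up to $n^{o(1)}$. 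The main obstacle, and the place where the Hu--Shi machinery must be pushed hardest, is the \textbf{positive large-perturbation case (iii)}: there the endpoint of the spine must be constrained to an atypically low value $n^\delta$ \emph{simultaneously} with the whole path staying positive, and getting the precise polynomial exponent $2\delta - 3/2$ (as opposed to just $\delta - 3/2$, or some other power) demands a sharp two-sided ballot estimate for a finite-variance random walk conditioned to stay positive and end low — precisely the type of refined local estimate around which \cite{HS09} is built — together with careful control that the quenched $\gamma$th moment does not lose more than $n^{o(1)}$ relative to the annealed computation. Establishing that refined persistence-with-prescribed-endpoint asymptotic uniformly enough to feed both the upper and the lower (second-moment) bound will be the technical heart of the argument.
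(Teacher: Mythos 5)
Your random-walk heuristics are exactly the ones the paper uses: for $\delta\ge 1/2$ the tilt $e^{\mp n^{-\delta}S_n}$ is harmless and one sees the persistence probability $n^{-1/2+o(1)}$; for the large negative perturbation a Gaussian optimization puts $S_n\approx \beta_c^2\lambda''(\beta_c)\,n^{1-\delta}$ and yields the exponent $\tfrac12 n^{1-2\delta}\beta_c^2\lambda''(\beta_c)$ (the paper's upper bound here is literally Jensen against the first moment, and its lower bound uses the spine plus FKG to decouple $e^{n^{-\delta}S_n}$ from the positivity indicator); and for the large positive perturbation the ballot estimate $\p(\underline S_n\ge 0,\ S_n\le n^\delta)\asymp n^{2\delta-3/2}$ is indeed the source of the exponent. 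You also correctly flag part (iii) as the technical heart.

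The genuine gap is in the reduction step. The identity you invoke, that $\E[(W_n^{\pm,\delta})^\gamma]$ is ``controlled above and below by $2^{n\gamma}\,\E[e^{-\gamma(1\pm n^{-\delta})V(\mathrm{spine})}\mathbf 1\{\cdots\}]$'', is not a many-to-one lemma: many-to-one computes \emph{first} moments of sums over the tree, and there is no concentration statement that turns a $\gamma$-th moment of the sum into the $\gamma$-th power of a single spine expectation (as written, your formula evaluates to an exponentially decaying quantity in case (i), not $n^{-\gamma/2}$). The mechanism the paper actually uses is the size-biasing identity $\E[X^{1-s}]=\E_\Q\bigl[X^{-s}\cdot(\text{spine density factor})\bigr]$ applied not to $W_n^{\pm,\delta}$ itself but to the modified sums $\overline Y_n=\sum_{|v|=n}(g(n)\vee V^+(v)^\alpha)e^{-(1+\gamma_n)V(v)}$ and $\underline Y_n$ (with $\wedge$), where $\alpha(\delta)$ is chosen so that $(S_n^+)^\alpha\asymp g(n)$ on the dominant event ($\alpha=1$ for $\delta\ge\tfrac12$, $\alpha=\tfrac{3}{2\delta}-2$ for the large positive case). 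This insertion is what makes the $g(n)^{1-s}$ normalization cancel and the exponent come out exactly linear in $\gamma$; it is the step your proposal silently skips when you say the $\gamma$-power ``only contributes to the $(1+o(1))$''. For the lower bound the paper then applies Jensen to $\E_\Q[\underline Y_n^{-s}\mid\calG_n]$ and bounds the spine decomposition of $\E_\Q[\underline Y_n^{s}\mid\calG_n]$ term by term, avoiding second moments altogether; your proposed truncated second moment plus Paley--Zygmund in a tube (positive path, endpoint near $n^\delta$) is not wrong in principle, but the required bound $\E[\tilde X^2]\le n^{o(1)}(\E\tilde X)^2$ for the tube-restricted sum in case (iii) is itself a substantial overlap computation that you would have to carry out, and it delivers a lower bound on a probability rather than directly on the fractional moment. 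Finally, the paper evaluates the limiting functional by a Koml\'os--Major--Tusn\'ady coupling and the explicit joint density of Brownian motion and its running maximum, rather than by discrete local-limit ballot estimates; either route is viable, but one of them must actually be carried out uniformly in the endpoint constraint.
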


In Appendix \ref{sec:moments2asymp} we employ standard arguments to show that Theorem \ref{thm:main} is a corollary of Theorem \ref{thm:fractional_moments}, so
the main focus of this paper is proving Theorem \ref{thm:fractional_moments}.

\subsection{Organization and idea of the proofs \label{subsec:strategy}}

We give here a brief outline of our methods for proving Theorems \ref{thm:main} and \ref{thm:fractional_moments}.
Before we concentrate on our proofs, we will comment on which parts
of the asymptotics can be easily deduced from known results about the
minimal energy (\ie the minimal position of a branching random walk).
We first recall that it was shown in \cite[Thm 1.2]{HS09} that
\begin{align}\label{eq:as_bounds}
\bal \limsup_{n\ra\infty} \frac{1}{\log n} \inf_{|v| = n} V(v) = \frac{3}{2} \,, \quad \liminf_{n\ra\infty} \frac{1}{\log n} \inf_{|v| = n} V(v) = \frac{1}{2}, \, \eal
\end{align}
both almost surely.

\begin{corollary} \label{cor:easy_bounds}
For any negative perturbation, \ie any $\delta > 0$,
\[ W_n^{-,\delta} \geq n^{-\frac{1}{2} + o(1)}, \quad \mbox{almost surely}, \]
for any positive perturbation
\[ W_n^{+,\delta} \leq n^{-\frac{1}{2} + o(1)}, \quad \mbox{almost surely}, \]
and for any perturbation
\[ \limsup_{n \ra \infty} \frac{ \log W_n^{\pm, \delta} }{\log n} \geq - \frac{1}{2},
 \quad \mbox{almost surely.}
\]
\end{corollary}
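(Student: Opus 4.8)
The plan is to extract both the upper and the lower bounds directly from the almost sure minimal-energy asymptotics \eqref{eq:as_bounds} together with the Hu--Shi estimate $W_n = n^{-1/2 + o(1)}$ a.s.\ recorded in \eqref{eqn:decay_critical_martingale}; no genuinely new estimate is required, so the work is essentially bookkeeping with the sign of the perturbation.

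First I would observe that the $\liminf$ in \eqref{eq:as_bounds} equals $1/2 > 0$, which forces $\inf_{|v| = n} V(v) > 0$ for all sufficiently large $n$, almost surely (e.g.\ $\inf_{|v|=n} V(v) \ge \tfrac14 \log n$ eventually). On this full-probability event one has a pointwise comparison of weights: since $1 - n^{-\delta} \in (0,1)$ and $V(v) > 0$ for every $|v| = n$, we get $e^{-(1-n^{-\delta})V(v)} \ge e^{-V(v)}$, and summing over $|v| = n$ yields $W_n^{-,\delta} \ge W_n = n^{-1/2 + o(1)}$, which is the first claim. Symmetrically, $e^{-(1+n^{-\delta})V(v)} \le e^{-V(v)}$ once $V(v) > 0$, so $W_n^{+,\delta} \le W_n = n^{-1/2 + o(1)}$ almost surely, the second claim.

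For the $\limsup$ statement the negative-perturbation case is already subsumed by the first bound (which in fact delivers a $\liminf$). For $W_n^{+,\delta}$ I would instead use the $\liminf$ value $1/2$ in \eqref{eq:as_bounds}: almost surely there is a random sequence $n_k \to \infty$ along which $\inf_{|v|=n_k} V(v) = \big(\tfrac12 + o(1)\big)\log n_k$. Bounding $W_{n_k}^{+,\delta}$ below by its single heaviest term, namely the weight attached to a vertex realizing $\inf_{|v|=n_k} V(v)$, gives
\begin{align*}
\log W_{n_k}^{+,\delta} \ \ge\ -(1 + n_k^{-\delta})\,\inf_{|v|=n_k} V(v) \ =\ -\big(\tfrac12 + o(1)\big)\log n_k ,
\end{align*}
whence $\limsup_{n\to\infty} (\log W_n^{\pm,\delta})/\log n \ge -1/2$.

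I do not expect any substantial obstacle here: the only points to watch are that $V(v)$ is eventually positive (so the perturbation is monotone in the direction needed for the term-by-term comparison) and, for the $\limsup$, that one must pass to the sparse subsequence on which the minimal energy drops to the order $\tfrac12 \log n$ rather than the typical $\tfrac32 \log n$. The genuinely hard, matching bounds are exactly the content of Theorem~\ref{thm:main}, whose proof occupies the remainder of the paper.
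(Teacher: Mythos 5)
Your proof is correct and follows essentially the same route as the paper: both arguments rest on the Hu--Shi estimate $W_n = n^{-1/2+o(1)}$ together with the a.s.\ minimal-energy asymptotics \eqref{eq:as_bounds}, using the eventual positivity (indeed logarithmic growth) of $\inf_{|v|=n}V(v)$ for the term-by-term comparison with $W_n$, and the single minimizing vertex along the $\liminf$ subsequence for the $\limsup$ bound. The only cosmetic difference is that the paper retains the explicit factor $e^{\pm n^{-\delta}\inf_{|u|=n}V(u)}$ where you simply bound it by $1$, which changes nothing.
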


\begin{rem}\label{rem:trivial_bounds}
These bounds immediately prove that the lower bound for
negative perturbations and the upper bound
for positive perturbations in part (i) of Theorem~\ref{thm:main}
hold (and even in an almost sure sense). In fact our proofs will show that all lower bounds in Theorem \ref{thm:main} hold almost surely. 

Combining the second and third statement of Corollary \ref{cor:easy_bounds} we also see that
$$\limsup_{n \ra\infty} \frac{\log W_n^{+,\delta}}{\log n } = - \frac{1}{2}, \quad \textrm{almost surely}.$$
For $0 < \delta < 1/2$ we also have for the lim inf that
$$\liminf_{n \ra \infty} \frac{\log W_n^{+,\delta}}{\log n} = 2 \delta - \frac{3}{2}, \quad \textrm{almost surely}.$$
\end{rem}

\begin{rem}\label{rem:large_neg_pert}
For negative perturbations the first statement of the corollary completes the proof of the lower bound in part (i) of Theorem \ref{thm:main}. Using this we do not need to prove the lower bound for the fractional moment of $W_n^{-,\delta}$ in the $\delta \geq 1/2$ case (i.e. part (i) of Theorem \ref{thm:fractional_moments}). However, we point out that the fractional moment is an easy corollary of the fractional moments of $W_n$ \cite[Thm. 1.5]{HS09} and the asymptotics of $\inf_{|v|=n} V(v)$.
\end{rem}

\begin{proof}[Proof of Corollary~\ref{cor:easy_bounds}]
For any negative perturbation
we have the lower bound
\[ W_n^{-,\delta} = \sum_{|v| = n} e^{- (1-n^{-\delta})V(v)} \geq e^{n^{-\delta} \inf_{|u| = n} V(u)} \sum_{|v| = n} e^{- V(v)}
\geq e^{\frac{1}{2}n^{-\delta} \log n (1+o(1))} W_n  \, , \]
Since \cite{HS09} implies $W_n = n^{-\frac{1}{2} + o(1)}$ almost surely, we immediately
obtain that
$W_n^{-, \delta} \geq n^{-1/2 + o(1)}$ almost surely.
Using the same idea we also obtain an upper bound for any positive
perturbation, namely
\[ W_n^{+, \delta} = \sum_{|v| = n} e^{- (1+n^{-\delta})V(v)} \leq e^{-\frac{1}{2}n^{-\delta} \log n(1+o(1))} W_n = n^{-1/2 + o(1)}, \]\\[-3mm]
where the last equality is again a consequence of the Hu-Shi asymptotics \eqref{eqn:decay_critical_martingale} for $W_n$.

Finally, we can always obtain a lower bound by only keeping the
minimizing particle in the sum defining the partition function, so that
\[ W_n^{\pm,\delta}  \geq e^{-(1-n^{-\delta}) \inf_{|v|  = n} V(v)} . \]
Now, the $\liminf$ asymptotics~(\ref{eq:as_bounds}) of $\inf_{|v| = n} V(v)$
yield the lower bound on the $\limsup$ asymptotics for $W_n^{\pm, \delta}$.
\end{proof}

The rest of the paper is focused on proving Theorem \ref{thm:fractional_moments}. In several papers on branching processes the \textit{spine method} is the main technique used to understand asymptotics of the process.
The first step is to enlarge the probability space by identifying a special ray, the ``spine'', in the
tree. The second step involves constructing a size-biased probability measure that is tilted towards environments and rays for which the normalized energy $\{V(\xi_i)\}_{i=1}^n$ is typical along the chosen ray $\xi$. Precise definitions and properties of the construction are reviewed in Section \ref{sec:spine_technique}.

The main purpose of this construction is that one can deduce
the asymptotics of the partition function from the
behavior of the normalized energies on the spine $\{ V(\xi_i)\}_{i=1}^n$.
Moreover,
under this tilted measure these normalized energies are in distribution
equal to
to a mean zero random walk. The problem is thus broken into two smaller pieces: first showing that the fractional moments can be estimated by some functional of a simple random walk, and then using random walk methods to estimate the functional.

We explain this strategy in more detail in  the case of small and large positive perturbations.
Our aim is to show that, in a rough sense,  the perturbed partition function
$W_n^{\cdot, \delta}$
decays like the inverse of
\[ g(n) = \left\{ \ba{ll} n^{1/2} & \mbox{if }\delta \geq \frac{1}{2}, \mbox{ any perturbation}, \\
 n^{\frac{3}{2} - 2\delta} & \mbox{if } \delta \in (0,\tfrac{1}{2}), \mbox{ positive perturbation}. \ea \right.
\]
Following the philosophy of the spine method,
we can reduce  a fractional moment
into a functional of a random walk and we eventually show that
for $s \in (0,1)$,
\begin{equation}\label{frac_to_RW} \E[ (g(n) W_n^{\pm, \delta})^{1-s} ] \approx \E [ (g(n) \star (S_n^+)^\alpha)e^{\mp n^{-\delta}S_n} \1_{\{\min_{j} S_j \geq 0 \}} ] , \end{equation}
where $S_n$ is a mean zero random walk with exponential moments, $\star$ is
maximum $\vee$ or
minimum $\wedge$  (depending on whether we consider
an upper or lower bound) and $\alpha > 0$
is a free parameter. If our choice of parameters is correct,
then the right hand side should be essentially constant
(and the dependency on $s$ is hidden in constants).

At this point we can fully notice the effect of the perturbation. If $\delta \geq
\frac{1}{2}$, \ie if the perturbation is small, the
term $e^{-n^{-\delta} S_n}$ is negligible. Hence, the dominating behavior
is that of a random walk conditioned to be positive so that the end point
fluctuates around $n^{\frac{1}{2}}$. However, if we are in the case of
a positive large perturbation the $e^{-n^{-\delta} S_n}$ factor starts to
push the random walk down at the end, so that the dominating contributions
come from random walks that stay positive but end up at a scale $n^{\delta}$
at time~$n$.
In particular, we see that if we choose the parameter $\alpha$
as
\[ \alpha := \alpha(\delta) = \left\{ \ba{ll} 1 & \mbox{if } \delta \geq \frac{1}{2}, \mbox{ any perturbation}, \\
 \frac{3}{2\delta} - 2 & \mbox{if } \delta \in (0,\frac{1}{2}), \mbox{ positive perturbation}.\ea \right.
\]
then, under the dominating behavior in~(\ref{frac_to_RW}),  the random walk satisfies
$(S_n^+)^\alpha \approx g(n)$.

We emphasize that the strategy behind our proofs is highly motivated by
the use of fractional moments and the spine methods in~\cite{HS09}.
However, their proofs cannot be translated directly to deal
with a perturbation of the partition function.
Moreover, in order to be able to concentrate on the new difficulties,
we focus exclusively on the case of a binary tree instead
of general Galton-Watson trees. The binary tree model also appears naturally
as a toy model for polymers.

The \emph{organization} of our paper is as follows:
in Section \ref{sec:spine_technique} we give a brief review of the spine method.
In Section~\ref{sec:large_negative_perturbation}, we deal with the simplest case of a fractional moment bound for a large negative perturbation, which is part (ii) of Theorem \ref{thm:fractional_moments}. Since we only show less refined asymptotics,
we can use simpler methods.
In the remainder of the paper, we carry out the above strategy for
all small and large negative perturbations.
In Section~\ref{section:upper_bound}, we show that we obtain an upper bound
on the fractional moments in terms of a random walk as in~(\ref{frac_to_RW}),
while in Section~\ref{section:lower_bound} we show the corresponding
lower bound.
To complete the proof of the fractional moment estimates, Theorem~\ref{thm:fractional_moments},
we analyze in Section \ref{section:random_walk_expression} the random
walk functional on the right hand side of~(\ref{frac_to_RW}) using a coupling argument
with a Brownian motion.
Appendix~\ref{sec:moments2asymp} shows
how to deduce Theorem~\ref{thm:main} from Theorem~\ref{thm:fractional_moments}.

{\bf Notation:} Throughout the paper, we will use generic constants $c, C > 0$,
whose values may change from line to line. If it is essential, we will indicate
their dependence on parameters.

{\bf Acknowledgments:} We thank the organizers of the $2010$ PIMS Summer School in Probability, where this project originated, and the Fields Institute for hosting us while most of this work was completed. We also thank the organizers of the $2011$ Fields Thematic Program on Dynamics and Transport in Disordered Systems for the invitation to the program.

\section{Spine Method \label{sec:spine_technique}}

\newcommand{\st}{\textbf{SpinedTrees}}

Recall the weight function $V : T \to \R$ defined by \eqref{vdef} and the expression \eqref{wndef} for $W_n$. Let $\st = \{ (\calV, \xi) : \calV = (V(v) : v \in T), \xi \in \partial T \}$ be the space of weights on the vertices of $T$ with a marked spine $\xi$. Let $\calF_n = \sigma(V(v), |v| \leq n; \xi_i, i \leq n)$ be the filtration giving all the information on the weights and spine up to level $n$, and recall that $\calW_n = \sigma(\omega(v) : |v| \leq n)$. Let $\p$ be the probability measure on $\st$ such that the $V(v)$ variables have the distribution defined by \eqref{vdef} with all of the $\omega$ being i.i.d. and $\xi$ chosen uniformly from $\partial T$. Let $\Q$ be the probability measure on $\st$ defined by
\begin{align}\label{Qdef}
\left. \frac{d \Q}{d \p} \right|_{\calF_n} \!\!\!\! (\calV, \xi) = e^{-V(\xi_n) + n \log 2}.
\end{align}
It is easy to check that the latter expression is an $\calF_n$-martingale under $\p$, and hence $\Q$ extends to a measure on all of $\st$. A straightforward computation shows that conditional on the weights $\calV$ (i.e. on $\calW_{\infty}$), the distribution of $\xi_n$ is given by
\begin{align}\label{Qn_dist}
\Q \left( \left. \xi_n = v \right| \calW_{\infty} \right) = \frac{e^{-V(v)}}{W_n}.
\end{align}
Comparing \eqref{Qdef} and \eqref{Qn_dist} with \eqref{vdef} we see that the measure $\Q$ is tilted towards elements of $\st$, for
which the Gibbs measure is large.
Note also that $\Q$ restricted to $\calW_n$ has Radon-Nikod\'ym derivative $W_n$.
Moreover, under $\Q$ the sequence $V(\xi_n)$ turns out to be a random walk with mean zero increments. This is proved in a number of different sources (see \cite{MO08,HS09}, for example) but we recall the basic facts here. For each $n \geq 1$ let $b_n$ be the sibling vertex of $\xi_n$. Define the $\sigma$-algebras $\calG_n, \calG_n^*$ by
\begin{align*}
\calG_n := \sigma(V(\xi_i), \xi_i; i \leq n) \quad \mbox{and} \quad\calG^*_n := \sigma(V(\xi_i), V(b_i), \xi_i; i \leq n).
\end{align*}
Further, let $(S_n, n \geq 0)$ be a random walk with $S_0 = 0$ whose independent increments have the $\Q$-distribution of $V(\xi_1)$. Then there is the following well-known set of results:

\begin{prop}\label{spine_proposition}
Under the measure $\Q$,
\begin{enumerate}
\item the process $(V(\xi_n))_{n \geq 0}$ has the same distribution as the random walk $(S_n)_{n \geq 0}$,
\item for any measurable function $F: \R \to \R$
\begin{align*}
\E_{\Q} \left[ F(S_1) \right] = 2\E \left[ F(V_0) e^{-V_0} \right]
\end{align*}
where $-V_0 = \beta_c \omega - \lambda(\beta_c) - \log 2$,
\item the random variables $(V(\xi_n) - V(\xi_{n-1}), V(b_n) - V(\xi_{n-1}))$ are i.i.d.\ and distributed as $(S_1, V_0)$,
\item conditionally on $\calG_n^*$ the weights $V(v) - V(b_k)$ on the subtree $T(b_k)$ rooted at $b_k$ are independent
of $V(b_k)$ (and independent for each subtree)
and have the same distribution as under the original measure $\p$.
\end{enumerate}
\end{prop}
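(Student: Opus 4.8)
The plan is a direct change of measure built on the branching structure of the randomly labelled tree. First I would note that, by \eqref{vdef} and the independence of the $\om(v)$, the weight $V$ is additive along the ray from the root, its increments along any fixed path being i.i.d.\ copies of the variable $V_0$ appearing in part (ii). I would then fix the following decomposition of a spined tree $(\calV,\xi)\in\st$: the spine increments $Y_j := V(\xi_j)-V(\xi_{j-1})$, the sibling displacements $Z_j := V(b_j)-V(\xi_{j-1})$, the sequence of choices of which child continues the spine at each level, and, for each $k$, the subtree $T(b_k)$ equipped with the recentred weights $V(v)-V(b_k)$, $v\in T(b_k)$. The branching property gives that under $\p$ all of these are mutually independent, with the spine-choices i.i.d.\ uniform, the pairs $(Y_j,Z_j)$ i.i.d.\ with $Y_j$ and $Z_j$ independent copies of $V_0$, and each recentred subtree an independent copy of the labelled tree under $\p$. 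Establishing this decomposition carefully is where the bulk of the (routine) bookkeeping sits.

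The key observation is that the Radon-Nikod\'ym derivative \eqref{Qdef} telescopes along the spine:
\begin{align*}
\left.\frac{d\Q}{d\p}\right|_{\calF_n}(\calV,\xi) = e^{-V(\xi_n)+n\log 2} = \prod_{j=1}^n 2e^{-Y_j},
\end{align*}
a product in which the $j$th factor depends only on $Y_j$, and which in particular is $\calG_n$-measurable. Since tilting a family of independent objects by a product of functions, each of a single object, preserves independence and alters only the marginals of the tilted coordinates, under $\Q$ the decomposition above is again a decomposition into mutually independent objects, with the spine-choices, the $(Z_j)$ and the recentred subtrees retaining their $\p$-laws, while the $Y_j$ become i.i.d.\ with
\begin{align*}
\E_\Q[F(Y_1)] = \E_\p\big[F(V_0)\,2e^{-V_0}\big] = 2\,\E\big[F(V_0)e^{-V_0}\big].
\end{align*}
This is a genuine probability law because $2\E[e^{-V_0}] = 2e^{-\la(\beta_c)-\log 2}\E[e^{\beta_c\om}] = 2e^{-\log 2} = 1$, which is part (ii). Part (i) is then immediate from $V(\xi_n)=\sum_{j=1}^n Y_j$ and the definition of $(S_n)$, and part (iii) follows since the pair $(Y_n,Z_n)=(V(\xi_n)-V(\xi_{n-1}),V(b_n)-V(\xi_{n-1}))$ is i.i.d.\ in $n$ with $Y_n,Z_n$ independent under $\Q$ and distributed as $S_1$ and $V_0$. (For the mean-zero statement made in the text just before the proposition I would differentiate $\beta\mapsto\E[e^{\beta\om}]$ and invoke \eqref{critical_eqn} to see $\E_\Q[Y_1]=2\E[V_0e^{-V_0}]=0$.)

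For part (iv), the point is that the density $\prod_{j=1}^n 2e^{-Y_j}$ is not only $\calG_n$- but a fortiori $\calG_n^*$-measurable, and under $\p$ it is independent of the recentred subtree weights $\{V(v)-V(b_k)\}_{v\in T(b_k)}$. The same factorisation as above then yields that under $\Q$, conditionally on $\calG_n^*$, the recentred subtrees $T(b_k)$, $k\le n$, remain independent of one another and of $\calG_n^*$ (hence of $V(b_k)$), each distributed as the labelled tree under $\p$. The one point to handle with care is that $\calF_n$ records only the first $n$ levels, so I would first prove the identity for test functions depending on finitely many levels of finitely many of the subtrees and then pass to the limit, which is legitimate because the consistency of the densities \eqref{Qdef} is exactly what makes $\Q$ well defined on all of $\st$ in the first place.

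I do not anticipate a genuine obstacle here --- the proposition is classical and appears in the references cited --- so the ``hard part'' is purely organisational: pinning down the branching decomposition of $(\calV,\xi)$ under $\p$ precisely enough that the statement ``the tilt is a product of functions of the individual $Y_j$'' is literally correct, and keeping the conditioning and measurability bookkeeping in part (iv) honest.
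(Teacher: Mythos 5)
Your proof is correct. The paper itself does not prove this proposition---it cites it as a well-known fact from \cite{MO08,HS09}---and your argument (telescoping the Radon--Nikod\'ym derivative $e^{-V(\xi_n)+n\log 2}=\prod_{j=1}^n 2e^{-Y_j}$ into a product of functions of the individual spine increments, so that the tilt preserves the independence structure of the branching decomposition, changes only the marginal of each $Y_j$, and leaves the sibling displacements and recentred subtrees with their $\p$-laws) is exactly the standard spine construction used in those references, including the normalisation check $2\E[e^{-V_0}]=1$ and the measurability/consistency bookkeeping needed for part (iv).
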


Choosing $F(x) = x$ in (ii) and using the relation \eqref{critical_eqn} gives that $\E_{\Q}[S_1] = 0$. Hence $S_n$ is a mean zero random walk by parts (i) and (iii).

\section{Large negative perturbations \label{sec:large_negative_perturbation}}

Using the spine method we prove part (ii) of Theorem \ref{thm:fractional_moments}, which is the fractional moments for a large negative perturbation. Combined with the results of Appendix~\ref{sec:moments2asymp} this completes the proof of part (ii) of Theorem \ref{thm:main}.

\begin{thm14ii} For any $\delta \in (0, \frac{1}{2})$ and $s \in (0,1)$, we have that
 \[ \E [ (W_n^{-,\delta})^{1-s} ] = \exp \{\tfrac{1}{2}(1-s)n^{1-2\delta}\beta_c^2 \la''(\beta_c)(1+o(1)) \}
. \]
\end{thm14ii}

\begin{proof}
We first record a standard computation, where we recall the definition of $V$ in~(\ref{vdef})
and compute for any
$0 \leq k \leq n$,
\begin{equation}\label{eqn:moment_calc} \bal \E\Big[ \sum_{|v| = k} e^{- (1-n^{-\delta}) V(v) } \Big]
& = \sum_{|v| = k} \E[ e^{ (1- n^{-\delta}) (\beta_c \sum_{j=1}^k \om(v_j) - k(\la(\beta_c) + \log 2)) } ] \\
& = 2^k \,\E[ e^{(1-n^{-\delta})\beta_c \om} ]^k e^{ - k (1-n^{-\delta}) (\la(\beta_c) + \log 2)} \\
& = \exp \big\{ k \big( \la( (1-n^{-\delta}) \beta_c) -  \la(\beta_c) + n^{-\delta} \beta_c
\la'(\beta_c)  \big) \big\}\\
& = \exp \{ k ( \tfrac{1}{2}  n^{-2\delta}\beta_c^2 \la''(\beta_c) + O (n^{-3\delta}) ) \},
\eal
\end{equation}
where in the penultimate step we used the definition
of $\beta_c$ in~(\ref{critical_eqn}) and a Taylor expansion.

In particular, taking $k=n$ we immediately obtain the \emph{upper bound} on the
fractional moments
by using Jensen's inequality to estimate that for any $s \in (0,1)$,
\[ \E \big[(W^{-,\delta}_n)^{1-s}\big] \leq \E[W^{-,\delta}_n]^{1-s} = \exp \{\tfrac{1}{2}(1-s)n^{1-2\delta}\beta_c^2 \la''(\beta_c) + O(n^{1-3\delta})\} \, , \]
the last equality following from the calculation in~(\ref{eqn:moment_calc}).

We now prove the \emph{lower bound}. Fix $s \in (0,1)$ and observe that
with the notation for the spine technique as introduced
in Section~\ref{sec:spine_technique},
\begin{align*}
\frac{W_n^{-, \delta}}{W_n} = \sum_{|v|=n} e^{n^{-\delta} V(v)} \Q \left( \xi_n = v | \calW_{\infty} \right) = \E_{\Q}[e^{n^{-\delta} V(\xi_n)} | \calW_{\infty}].
\end{align*}
Then the fractional moment can be written as
\[ \E [ (W_n^{-, \delta})^{1-s}] = \E_\Q \Big[  \frac{W_n^{-, \delta}}{W_n}  (W_n^{-,\delta})^{-s} \Big]
= \E_\Q \big[ e^{n^{-\delta} V(\xi_n)}   (W_n^{-,\delta})^{-s}\big],
\]
By conditioning on the weights on the spine $\calG_n$
and applying Jensen's inequality we obtain a lower bound of
\begin{equation}\label{eq:1704-2}\bal  \E [ (W_n^{-, \delta})^{1-s}] & =
\E_\Q \big[ e^{n^{-\delta} V(\xi_n)}   \E [ (W_n^{-,\delta})^{-s}|\calG_n] \big]
 \\ &
\geq \E_\Q \big[ e^{n^{-\delta} V(\xi_n)}   \E [ (W_n^{-,\delta})^{s}|\calG_n]^{-1} \big]
. \eal \end{equation}
We now decompose the tree along its spine to write
\begin{align*}
W_n^{-,\delta} & = e^{-(1-n^{-\delta})V(\xi_n)} \\
& \quad\quad + \sum_{i=1}^n e^{-(1-n^{-\delta})V(\xi_{i-1})} e^{-(1-n^{-\delta})(V(b_i)-V(\xi_{i-1}))} \!\!\!\!\!\!\! \sum_{v \in T_{n-i}(b_i)} e^{-(1-n^{-\delta})(V(v) - V(b_i))},
\end{align*}
where we recall that $b_i$ denotes the sibling of $\xi_i$ in the tree.
Using Proposition \ref{spine_proposition} and the subadditivity inequality
$(\sum_i a_i)^s \leq \sum a_i^s$ for $a_i \geq 0$, we can show that
\begin{align}\label{eq:1704-1} \E_{\Q}[ W_n^{-,\delta} | \calG_n ]^{s} & \leq e^{-s (1-n^{-\delta})V(\xi_n)} \notag\\
& \quad\quad + \sum_{i=1}^n e^{-s (1-n^{-\delta})V(\xi_{i-1})} \E[e^{-(1-n^{-\delta})V_0}]^s \E\Big[ \sum_{|v| = n-j} e^{-(1-n^{-\delta})V(v)} \Big]^s
 \notag\\
& \leq e^{ \frac{1}{2} s n^{1-2\delta} \beta_c^2 \la''(\beta_c) + O(n^{1-3\delta})}
\sum_{i=0}^n e^{-s (1-n^{-\delta}) V(\xi_i)} . \end{align}
The last inequality uses that $ \E[e^{-V_0}]^s = 2^{-s}$ (see Proposition \ref{spine_proposition}, part (ii), for the definition of $V_0$)
and finally the calculation in~(\ref{eqn:moment_calc}).
Combining these last two estimates~(\ref{eq:1704-2}) and~(\ref{eq:1704-1}), we conclude that
\begin{equation}\label{eq:2304-1} \E [ (W_n^{-,\delta})^{1-s}] \geq e^{-\frac{1}{2} s n^{1-2\delta} \beta_c^2 \la''(\beta_c) + O(n^{1-3\delta})} \E_\Q \Big[ \frac{e^{n^{-\delta}V(\xi_n)}}{\sum_{i=0}^n e^{-s (1-n^{-\delta})V(\xi_i)}} \Big]
. \end{equation}
Denoting by $(S_i)_{i \geq 0}$ the random walk introduced in Proposition~\ref{spine_proposition},
we can rewrite the expectation on the right hand side as
\[ \bal
 \E_\Q \Big[ \frac{e^{n^{-\delta}V(\xi_n)}}{\sum_{i=0}^n e^{-s (1-n^{-\delta})V(\xi_i)}} \Big]
& = \E_\Q \Big[ \frac{e^{n^{-\delta}S_n}}{\sum_{i=0}^n e^{-s (1-n^{-\delta})S_i }} \Big]
\geq \frac{1}{n+1}\E_\Q \big[ e^{n^{-\delta}S_n} \1_{\{ \min_{i=1}^n S_i \geq 0 \}}  \big] \\
& \geq \frac{1}{n+1} \E_\Q \big[ e^{n^{-\delta}S_n} \big] \Q\big\{  \min_{i=1, \ldots,n} S_i \geq 0 \big\},
\eal \]
where we used the FKG inequality
noticing that
$(x_i)_{i=1}^n \mapsto \1\{ \min_{i=1,\ldots,n} \sum_{j=1}^i x_i \geq 0 \}$ and $
(x_i)_{i =1}^n \mapsto e^{n^{-\delta} \sum_{j=1}^n x_j}$ are both
increasing functions. For more details of the FKG inequality in a similar
context see e.g.~\cite[Section 2.2]{AD11}.
To complete the proof,  we note that by~(\ref{eqn:moment_calc})
we can calculate the first moment as
$$\bal \E_\Q \big[ e^{n^{-\delta}S_n} \big]
& = \E_\Q \big[e^{n^{-\delta}V(\xi_n)} \big]
= \E_\Q \big[ \sum_{|v| = n} e^{n^{-\delta}V(v)} \Q \{ \xi_n = v | \calW_n \} \big] \\
& = \E [ W_n^{-,\delta}] =   \exp \left \{ \tfrac{1}{2}\beta_c^2 \la''(\beta_c)  n^{1-2\delta} + O(n^{1-3\delta}) \right\},
\eal $$
and for the second term we have that $\Q\{\min_{i=1,\ldots,n} S_i \geq 0\} = n^{-\frac{1}{2} + o(1)}$ by a standard random walk computation. Hence the
latter is negligible compared to the first term and from~(\ref{eq:2304-1})
we can deduce the required lower bound.
\end{proof}

\section{Upper bounds \label{section:upper_bound}}

In this section we find an upper bound on the fractional moments for all positive perturbations and small negative perturbations. The method we use works for all three types of perturbations simultaneously. To unify the argument we write $Y_n = W_n^{\cdot, \delta}$, where $\cdot$ is either $+$ or $-$ depending on whether we are considering a positive or negative perturbation. Define the growth function
\begin{align}\label{eqn:growth_func}
 g(n) = \left\{ \ba{ll}
 n^{1/2} & \mbox{if } Y_n = W_n^{\pm, \delta}, \delta \geq \frac{1}{2},  \\
n^{\frac{3}{2} - 2 \delta} & \mbox{if } Y_n = W_n^{+,\delta}, \delta \in (0,\frac{1}{2}) , \ea \right.
\end{align}
and also let $\gamma_n = \pm n^{-\delta}$ depending on which perturbation is under consideration.

We start by defining the auxiliary quantity $\overline Y_n$, which gives an upper bound as follows:
\[ g(n)Y_n \leq \sum_{|v|=n} (g(n) \vee V^+(v)^\alpha)e^{-(1+ \gamma_n) V(v)} =: \overline Y_n . \]
Here $\alpha = \alpha(\delta)$ is chosen as
\[ \alpha(\delta) = \left\{ \ba{ll} 1 & \mbox{if } Y_n = W_n^{\pm, \delta}, \delta \geq \frac{1}{2}, \\
   \frac{3}{2\delta} - 2 & \mbox{if } Y_n = W_n^{+,\delta}, \delta \in (0,\frac{1}{2}) . \ea \right. \]
This reasoning behind this particular choice of $\alpha$ is discussed
in Section~\ref{subsec:strategy}.

\begin{prop}\label{lemma:upper_reduction_to_random_walk} For all $s \in (0,1)$ there exists a constant $\kappa_0 > 0$ such that for all $\kappa \geq \kappa_0$ there is a $C >0$ satisfying
\[ \E[ \overline{Y}_n^{1-s} ] \leq C( 1 +  \E_\Q[ (g(n) \vee S_n^{\alpha} ) e^{-\gamma_n S_n}\1_A]) + o(1)  \]
as $n \to \infty$, where $A$ is the event
\[ A = \left \{ \min_{0\leq j \leq n} S_j \geq - \kappa \log n , S_n \geq 0 \right \} \, .\]
\end{prop}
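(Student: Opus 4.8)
The plan is to reduce the fractional moment $\E[\overline{Y}_n^{1-s}]$ to a spine expectation and then to a random walk functional over the restricted event $A$. Since $1-s \in (0,1)$, the elementary inequality $(\sum_i a_i)^{1-s} \le \sum_i a_i^{1-s}$ for $a_i \ge 0$ will \emph{not} directly help here because it goes the wrong way for an upper bound that we want to keep sharp; instead I would first use the spine change of measure. Writing $\overline{Y}_n = \sum_{|v|=n} (g(n) \vee V^+(v)^\alpha) e^{-(1+\gamma_n)V(v)}$, and recalling from~\eqref{Qn_dist} that $\Q(\xi_n = v \mid \calW_\infty) = e^{-V(v)}/W_n$, I would peel off one factor of $e^{-V(v)}$ to get $\overline{Y}_n = W_n\, \E_\Q[(g(n)\vee V^+(\xi_n)^\alpha) e^{-\gamma_n V(\xi_n)} \mid \calW_\infty]$. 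Raising to the power $1-s$ and using Jensen's inequality in the form $\E[X^{1-s}] \le (\E[X])^{1-s}$ is too lossy; the better route, following Hu--Shi, is: $\E[\overline{Y}_n^{1-s}] = \E_\Q[(W_n^{-,\cdot})^{?}\cdots]$, i.e. use $d\Q/d\p = W_n$ on $\calW_n$ to write $\E[\overline{Y}_n^{1-s}] = \E_\Q[W_n^{-1}\,\overline{Y}_n^{1-s}] = \E_\Q[W_n^{-s}\cdot (g(n)\vee V^+(\xi_n)^\alpha)^{1-s} e^{-(1-s)\gamma_n V(\xi_n)} \cdot (\text{correction})]$, then bound $W_n^{-s}$ from above by conditioning on $\calG_n$ and applying the decomposition of $W_n$ along the spine together with the moment computation~\eqref{eqn:moment_calc} and Jensen (exactly as in the proof of Theorem~\ref{thm:fractional_moments}(ii)). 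This turns $\E[W_n^{-s}\mid\calG_n]$ into a bound of the shape $C\big(\sum_{i=0}^n e^{-s V(\xi_i)}\big)$ up to harmless exponential corrections that are $o(1)$ on the relevant scale for $\delta \ge 1/2$ or $\delta \in (0,1/2)$ positive.

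Next I would pass from $V(\xi_\cdot)$ to the mean-zero random walk $(S_i)$ via Proposition~\ref{spine_proposition}(i), obtaining an expression roughly of the form
\[
\E[\overline{Y}_n^{1-s}] \le C\, \E_\Q\Big[ (g(n)\vee (S_n^+)^\alpha)^{1-s} e^{-(1-s)\gamma_n S_n} \cdot \Big(\sum_{i=0}^n e^{-s S_i}\Big)^{s} \Big] + (\text{small error}).
\]
The key structural point is that the sum $\sum_{i=0}^n e^{-s S_i}$ is essentially $O(1)$ \emph{when the walk stays above a slowly growing barrier} $-\kappa\log n$, and is genuinely dangerous only when the walk dips far below zero. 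So the plan is to split according to the event $A = \{\min_j S_j \ge -\kappa\log n,\ S_n \ge 0\}$ and its complement. On $A$ one bounds $\sum_i e^{-s S_i} \le (n+1) e^{s\kappa\log n} = (n+1) n^{s\kappa}$, but that polynomial factor is too crude; the refinement is that the \emph{typical} contribution has the walk staying nonnegative most of the time, so after another layer one uses a ballot-type / Markov-step estimate to absorb it, and the residual polynomial cost is swallowed by choosing $\kappa$ (equivalently $\kappa_0$) large enough relative to $s$ and combined with the exponential/polynomial decay coming from $\Q\{\min_j S_j \ge -\kappa\log n\}$ and the displacement needed to bring $S_n$ back up. Getting the $S_n \ge 0$ constraint rather than $S_n \ge -\kappa\log n$ into the final event $A$ requires a small additional argument: shifting the terminal constraint costs a factor $e^{\pm(1-s)\gamma_n \kappa\log n} = n^{\pm(1-s)\kappa n^{-\delta}\log n}\to 1$, which is negligible.

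The contribution from $A^c = \{\min_j S_j < -\kappa\log n\}$ must be shown to be $o(1)$; this is where choosing $\kappa \ge \kappa_0$ does the work. On $A^c$ the sum $\sum_i e^{-sS_i}$ can be as large as $e^{s\,|\min_j S_j|}$, but the probability that $\min_j S_j < -m$ decays like $e^{-c m}$ (or, more carefully, one uses that the overshoot below a level has exponential tails because $S$ has exponential moments, via Proposition~\ref{spine_proposition}(ii)); meanwhile on $A^c$ the factors $(g(n)\vee (S_n^+)^\alpha)^{1-s}$ and $e^{-(1-s)\gamma_n S_n}$ contribute at most polynomially in $n$ and subexponentially. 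A standard last-exit / first-passage decomposition at the level $-\kappa\log n$, combined with a union bound over the generation at which the spine walk first drops below $-\kappa\log n$ and a Cramér-type estimate for the remaining excursion, gives a bound of order $n^{C} e^{-c\kappa\log n} = n^{C - c\kappa}$, which is $o(1)$ once $\kappa_0$ is chosen with $c\kappa_0 > C$. Hence the whole $A^c$ part is the $o(1)$ term in the proposition, and the additive constant $C(1+\cdots)$ absorbs the $O(1)$ base contribution (e.g.\ the single term $v=\xi_n$ and the $\{S_n$ small$\}$ part).

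The step I expect to be the main obstacle is controlling $\E[W_n^{-s}\mid\calG_n]$ sharply enough: raising $W_n$ to a \emph{negative} power makes a naive subadditivity/Jensen argument go the wrong direction, so one must instead use the spine decomposition of $W_n$ \emph{inside} a conditional Jensen step (as in Theorem~\ref{thm:fractional_moments}(ii)) and be careful that the resulting $\sum_{i=0}^n e^{-sV(\xi_i)}$-type bound does not reintroduce an uncontrolled polynomial factor — this is precisely what forces the barrier $-\kappa\log n$ and the lower bound $\kappa \ge \kappa_0$ on the constant. A secondary technical nuisance is bookkeeping the deterministic Taylor-expansion corrections from~\eqref{eqn:moment_calc}, which are $O(n^{1-3\delta})$ for negative perturbations and must be checked to be $o(1)$ after being divided by the relevant scale; for the cases under consideration ($\delta \ge 1/2$, or $\delta \in (0,1/2)$ positive so $\gamma_n = n^{-\delta} > 0$ and $1 + \gamma_n > 1$) these corrections are harmless and get folded into the constant $C$ or the $o(1)$.
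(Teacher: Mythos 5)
Your overall architecture (change of measure to $\Q$, reduction to the spine walk, splitting on the event $A$ versus its complement) matches the paper's, but the mechanism you propose for killing the complement is wrong in a way that cannot be repaired along the lines you describe. On $\{\min_{j\le n}S_j<-\kappa\log n\}$ you invoke the bound $\Q(\min_j S_j<-m)\lesssim e^{-cm}$; for the \emph{mean-zero} walk $S$ this is false --- the running minimum over $n$ steps is typically of order $-\sqrt n$, so $\Q(\min_{j\le n}S_j<-\kappa\log n)\to 1$ for every fixed $\kappa$, and no choice of $\kappa_0$ makes this event rare. The paper's actual mechanism is entirely different: on this event one bounds $\overline{Y}_n$ from \emph{below} by the contribution of the subtree rooted at the sibling $b$ of the minimizing spine vertex $\underline\xi_n$, so that $\overline{Y}_n^{-s}\le g(n)^{-s}e^{s(1+\gamma_n)V(\underline\xi_n)}e^{s(1+\gamma_n)\inf_{v\in T(b)}(V(v)-V(b))}$ with $e^{s(1+\gamma_n)V(\underline\xi_n)}\le n^{-\kappa s(1+\gamma_n)}$; combined with the Hu--Shi estimate $\E[e^{s\inf_{|v|=m}V(v)}]\le Cm^{(3+\eps)s/2}$ applied to the off-spine subtree, this produces the factor $n^{-\kappa s}$ that a large $\kappa_0$ turns into $o(1)$. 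In other words, large $\kappa$ helps because a deep dip of the spine forces $\overline{Y}_n$ to be large (hence $\overline{Y}_n^{-s}$ small), not because the dip is improbable.

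Two further steps are off. First, your proposed upper bound $\E_\Q[W_n^{-s}\mid\calG_n]\le C\sum_{i=0}^n e^{-sV(\xi_i)}$ goes the wrong way: conditional Jensen plus subadditivity of $x\mapsto x^s$ gives $\E_\Q[W_n^{-s}\mid\calG_n]\ge\E_\Q[W_n^{s}\mid\calG_n]^{-1}\ge c\big(\sum_i e^{-sV(\xi_i)}\big)^{-1}$, i.e.\ a \emph{lower} bound --- this is the tool used for the lower bounds (Section~\ref{sec:large_negative_perturbation} and Section~\ref{section:lower_bound}), not for an upper bound; and even if granted, your estimate $\sum_i e^{-sS_i}\le(n+1)n^{s\kappa}$ on $A$ costs a polynomial factor that \emph{grows} with $\kappa$, which is incompatible with requiring $\kappa\ge\kappa_0$ large. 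The paper avoids this entirely by writing $\E[\overline{Y}_n^{1-s}]=\E_\Q[\overline{Y}_n^{-s}(g(n)\vee V^+(\xi_n)^\alpha)e^{-\gamma_nV(\xi_n)}]$ and, on the good event, simply using $\overline{Y}_n^{-s}\1_{\{\overline{Y}_n\ge1\}}\le1$. Second, inserting the constraint $S_n\ge0$ into $A$ is not a matter of an $e^{o(1)}$ multiplicative factor: one must show that the contribution of $\{-\kappa\log n\le S_n<0,\ \min_jS_j\ge-\kappa\log n\}$ vanishes, and there the integrand is of size $g(n)$, so the argument hinges on the local-limit estimate $\Q(F_2)\le Cn^{-3/2}(\log n)^3$ matched against the specific choice $g(n)\le n^{3/2-2\delta}$.
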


\begin{proof}
First note that it is sufficient to prove the proposition for $s$ small, since if it holds for small $s$ then it also holds for all larger $s \in (0,1)$. Indeed, by H\"older's inequality we have that for $s' > s$
\begin{align*}
\E[ \overline{Y}_n^{1-s'} ] \leq \E [ \overline{Y}_n^{1-s} ]^{\frac{1-s'}{1-s}} \leq 1 + \E [ \overline{Y}_n^{1-s} ],
\end{align*}
for $n$ sufficiently large, where we used that $\frac{1-s'}{1-s} < 1$.

Now observe that $\overline{Y}_n$ can be rewritten as
\begin{align*}
\overline{Y}_n = W_n \E_{\Q} \left[ (g(n) \vee V^+(\xi_n)^{\alpha}) e^{-\gamma_n V(\xi_n)} \big| \calW_{\infty} \big. \right],
\end{align*}
and then using the spine techniques of Section~\ref{sec:spine_technique} we obtain that
\be{upper_many_to_one}
\bal \E\big[ \overline Y_n^{1-s}\big] &= \E_{\Q} \big[ \overline Y_n^{\, -s} (g(n) \vee V^+(\xi_n)^\alpha) e^{-\gamma_nV(\xi_n)}\big]. \eal
\ee
As in the proof by \cite{HS09}, the main idea is to
show that the relevant contributions to $\overline Y_n$ only come from the
spine particle $\xi_n$.

We first notice that we can concentrate on the event $\overline Y_n \geq 1$ (on the complement
$\overline Y_n^{1-s}$ is bounded by $1$).
Now define $\underline{V}( \xi_n) = \inf_{i=1,\ldots, n} V(\xi_i)$. Fix $\kappa > 0$ and let $$E := \{ (\calV, \xi) \in \st : \underline{V}(\xi_n) \geq - \kappa \log n , V(\xi_n) \geq 0 \},$$
and notice that we can write $E^c = F_1 \cup F_2$ where
\[ F_1 = \{ (\calV, \xi) : \underline{V}(\xi_n) < - \kappa \log n\} \quad \mbox{and}\quad
F_2 = \{ (\calV, \xi) : V(\xi_n) < 0 , \underline{V}(\xi_n) \geq - \kappa \log n \} \, .\]
We will show that $\E [\overline{Y}_n^{1-s} \1_{\overline Y_n \geq 1} \1_{F_i}] \to 0$ as $n \to \infty$, for $i = 1,2$, so that by equation \eqref{upper_many_to_one} we will have
\[\bal \E\big[ \overline Y_n^{1-s}\big] & \leq 1 + \E [ \overline Y_n^{1-s} \1_{\overline Y_n \geq 1}]
 \\& \leq 1 + \E_{\Q}\big[ \overline Y_n^{\, -s} (g(n) \vee V^+(\xi_n)^\alpha) e^{-\gamma_n V(\xi_n)} \1_{\overline Y_n \geq 1} \1_{E} \big] + o(1) . \eal
\]
This will prove the lemma once we recall that, by Proposition \ref{spine_proposition}, $(S_i)_{i=1}^n$ is a random walk which has the same $\Q$-distribution as the weights $(V(\xi_i))_{i=1}^n$ along the spine.

{\bf Step 1.} We will show that
\[ \E \left[ \overline{Y}_n^{1-s} \1_{F_1} \right] = \E_{\Q} \left[ \overline{Y}_n^{\, -s} (g(n) \vee V^+(\xi_n)^\alpha) e^{-\gamma_n V(\xi_n)} \1_{F_1} \right] \to 0 \]
as $n\ra\infty$. Let $\underline \xi_n$ be the last element of $\xi_1, \xi_2, \ldots, \xi_n$ such that $\underline V(\xi_n) = V(\underline \xi_n)$. Let $b$ be the sibling of $\underline \xi_n$. On $F_1$, we can estimate $\overline Y_n$ from below by
\[ \overline Y_n \geq g(n) e^{- (1+\gamma_n) V(b)} e^{-(1+\gamma_n) \inf_{v \in T_{n - |b|}(b)} V(v)-V(b)} \, .\]
Thus,
\[ \bal &\E_\Q\big[ \overline Y_n^{\, -s}  (g(n) \vee V^+(\xi_n)^\alpha) e^{-\gamma_n V(\xi_n)}\1_{F_1} \big]  \\
& \leq
\E_\Q \big[ (g(n) \vee V^+(\xi_n)^\alpha) g(n)^{-s} e^{s(1+\gamma_n) V(b) - \gamma_n V(\xi_n) } e^{s(1+\gamma_n)
\inf_{v \in T_{n - |b|}(b)} V(v)-V(b)}\1_{F_1} \big] \\
& \leq
C \E_\Q \big[ (g(n) \vee V^+(\xi_n)^\alpha) g(n)^{-s} e^{s(1+\gamma_n) V(\underline \xi_n) - \gamma_n V(\xi_n) } \\
& \hspace{6cm} \times
\sup_{k = 0,\ldots, n}\E [ e^{s(1+\gamma_n)
\inf_{v \in T_{n - k}} V(v)}] \1_{F_1} \big]\, ,\eal
\]
where in the last step we took expectation conditionally on the weights on the spine,
and we twice used that the weights that are not on the spine are independent and
their distribution is not affected by the change of measure, see Proposition \ref{spine_proposition}.
Now, by~\cite[Prop. 5.1]{HS09},
there is a $s_0 \in (0,1)$ such that for all $s\leq s_0$ and any $\eps > 0$,
there exists $C = C(s) > 0$ such that
\[ \E\big[\exp \{ s  \inf_{|v| = n} V(v) \} \big] \leq C n^{(3+\eps)s / 2} \, .\]
In fact, the result in~\cite{HS09} is stated for some fixed $s_0$, but
by H\"older's inequality it immediately translates to all smaller $s \leq s_0$.
Substituting back into the above display (and noting that we can
absorb the $(1+n^{-\delta})$ in front of the infimum into the $\eps$),
we obtain
\be{eqn:event_B_rough_bound}
\bal E_\Q\big[ \overline Y_n^{\, -s}  & (g(n) \vee V^+(\xi_n)^\alpha) e^{-\gamma_n V(\xi_n)}\1_{F_1}\big]\\
& \leq C n^{(3+\eps)s / 2}\E_\Q \big[ (g(n) \vee V^+(\xi_n)^\alpha) g(n)^{-s} e^{s(1+\gamma_n) V(\underline \xi_n) - \gamma_n V(\xi_n) } \1_{F_1} \big] . \eal
\ee
From \eqref{eqn:event_B_rough_bound}, in the case that $\gamma_n \geq 0$, we use that
$V(\xi_n) \geq \underline V(\xi_n)$
and that $\underline V(\xi_n) < - \kappa \log n$ on the event $F_1$ to obtain
\[ \bal \E_\Q\big[ &\overline Y_n^{\, -s}  (g(n) \vee V^+(\xi_n)^\alpha) e^{-\gamma_n V(\xi_n)}\1_{F_1} \big]  \\
& \leq C n^{(3+\eps)s/2} g(n)^{-s} n^{-\kappa (s(1+\gamma_n) + \gamma_n)} \E_\Q [ (g(n) \vee V^+(\xi_n)^{\alpha}) \1_{F_1}]. \eal \]
Since $V(\xi_n)$ has the same distribution as $S_n$, standard random walk computations show that the latter expectation is less than $C(g(n) \vee n^{\alpha/2})$. Consequently, by the choice of $\alpha$, the right hand side of the last inequality is $o(1)$,
provided we choose $\kappa \geq \kappa_0$, where $\kappa_0$ has to be chosen
large enough.

In the case that $\gamma_n \leq 0$, we obtain an upper bound on~(\ref{eqn:event_B_rough_bound}) of
\begin{align}\label{eqn:1601}
 C g(n)^{-s} n^{(3+\eps)s}n^{-\kappa s + o(1)} \E_\Q [ (g(n) \vee V^+(\xi_n)^\alpha) e^{- \gamma_n V(\xi_n)} \1_{F_1}] \,.
\end{align}
We can further bound the expectation using Cauchy-Schwarz to obtain
\begin{align*}
 \bal \E_\Q [ (g(n) \vee V^{+}(\xi_n)^{\alpha}) e^{- \gamma_n V(\xi_n)} \1_{F_1}] & \leq \E_\Q [ (g(n) \vee V^+(\xi_n)^{\alpha})^2]^{1/2}
 \E_\Q [ e^{-2\gamma_n V(\xi_n)}]^{1/2}\\ & \leq (g(n) \vee n^{\alpha/2})\E_\Q [ e^{-2\gamma_n V(\xi_n)}]^{1/2} . \eal
\end{align*}
Since we are considering $\gamma_n \leq 0$ we have that $\delta \geq 1/2$, and since the $V(\xi_n)$ has the distribution of $S_n$ (which is a mean zero random walk with exponential moments), it follows that the expectation in the latter expression is of constant order. Hence \eqref{eqn:1601} is of order $o(1)$, again if $\kappa \geq \kappa_0$
for some suitably chosen $\kappa_0$.

{\bf Step 2.} We now show that
\begin{align*}
\E [\overline{Y}_n^{1-s} \1_{\overline{Y}_n \geq 1} \1_{F_2} ] = \E_{\Q}[ \overline{Y}_n^{-s} (g(n) \vee V^+(\xi_n)^{\alpha}) e^{-\gamma_n V(\xi_n)} \1_{\overline{Y}_n \geq 1} \1_{F_2} ] \to 0
\end{align*}
as $n \to \infty$. We upper bound the latter expression by
\begin{align*}
  \E_{\Q}\big[ \overline Y_n^{\, -s} (g(n) \vee V^+(\xi_n)^\alpha) e^{-\gamma_n V(\xi_n)} \1_{ \overline Y_n \geq 1}\1_{F_2}\big] &\leq \E_{\Q} \big[ (g(n) \vee V^+(\xi_n)^\alpha) e^{-\gamma_n V(\xi_n)} \1_{F_2}\big] \\
 &\leq g(n) e^{ \kappa n^{-\delta} \log n} \Q (F_2) \, ,
\end{align*}
where we used that if $\gamma_n \geq 0$, then we can bound
$V(\xi_n) \geq V(\underline \xi_n) \geq - \kappa \log n$, and
if $\gamma_n \leq 0$, then $- \gamma_n V(\xi_n) \leq 0$.
However, using that $V(\xi_i)$ is a mean zero random walk we may upper bound $\Q(F_2)$ by $Cn^{-3/2} (\log n)^3$ for some $C = C(\kappa)$, which corresponds
to the probability that a random walk comes back to zero at time $n$
on the event that it stays positive, see e.g.~\cite[Lemma A.1]{AidekonShi_weak}. Thus
\begin{align*}
g(n) \Q(F_2) \leq C n^{\frac32 - 2 \delta} n^{-\frac32} (\log n)^3 = o(1),
\end{align*}
which completes the proof.
\end{proof}

\section{Lower Bounds \label{section:lower_bound}}

The goal of this section to find a lower bound on the fractional
moment $\E [ (W_n^{\pm, \delta})^{1-s}]$ in terms of an expression
that only involves a (non-trivial) functional of a random walk.
By Remark~\ref{rem:large_neg_pert} and Section \ref{sec:large_negative_perturbation} we already have the required bounds for negative perturbations, hence it suffices to consider
only positive perturbations.

Let $g(n)$ be as in \eqref{eqn:growth_func} and let $\gamma_n = n^{-\delta}$. We lower bound $g(n) W_n^{+,\delta}$ by
\[ g(n)W_n^{+,\delta} \geq \sum_{|v|=n} (g(n) \wedge V^+(v)^\alpha)e^{-(1+ n^{-\delta}) V(v)} =: \underline Y_n , \]
where $\alpha = \alpha(\delta)$ is chosen as
\[ \alpha(\delta) = \left\{ \ba{ll} 1 &\mbox{if } \delta \geq \frac{1}{2}, \\
   \frac{3}{2\delta} - 2 & \mbox{if } \delta \in (0,\frac{1}{2}). \ea \right. \]

\begin{prop}\label{prop:low_bound} For any $s \in (0,1)$, there exist
 constants $\kappa^* = \kappa^*(s)$ and $\gamma(s)$ such that for $n_0 = \lceil \kappa^*(\log n)^2 \rceil$,
 \[ \E[ \underline Y_n^{1-s} ] \geq  \frac{1}{n_0^{\gamma(s)}} \E_\Q[(g(n-n_0)\wedge (S_{n-n_0}^+)^\alpha)e^{-(n-n_0)^{-\delta} S_{n-n_0}} \1_{\{ \min_{j\leq n-n_0} S_j \geq 0 \}} ]. \]
\end{prop}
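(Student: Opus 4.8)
The plan is to mirror the structure of the lower bound in Section~\ref{sec:large_negative_perturbation}: restrict the sum $\underline Y_n$ to those vertices $v$ whose spine passes through a ``good'' subtree, use the spine change of measure to rewrite the fractional moment as a $\Q$-expectation, and then localize the dominant contribution onto the spine particle $\xi_n$ with a conditional Jensen step. Concretely, I would first write $\E[\underline Y_n^{1-s}] = \E_\Q[\underline Y_n^{-s}\,(g(n)\wedge V^+(\xi_n)^\alpha)\,e^{-\gamma_n V(\xi_n)}]$ exactly as in~\eqref{upper_many_to_one}, using the representation $\underline Y_n = W_n\,\E_\Q[(g(n)\wedge V^+(\xi_n)^\alpha)e^{-\gamma_n V(\xi_n)}\mid\calW_\infty]$ and the fact that $\Q$ restricted to $\calW_n$ has Radon--Nikod\'ym derivative $W_n$. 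Then, conditioning on the spine $\sigma$-algebra $\calG_n$ and applying Jensen in the form $\E[\underline Y_n^{-s}\mid\calG_n]\ge \E[\underline Y_n^{s}\mid\calG_n]^{-1}$ (valid since $0<s<1$), I obtain
\[
\E[\underline Y_n^{1-s}] \;\ge\; \E_\Q\!\Big[(g(n)\wedge V^+(\xi_n)^\alpha)\,e^{-\gamma_n V(\xi_n)}\,\E_\Q[\underline Y_n^{s}\mid\calG_n]^{-1}\Big].
\]
The decomposition of the tree along the spine (as in Section~\ref{sec:large_negative_perturbation}) together with subadditivity $(\sum a_i)^s\le\sum a_i^s$ and Proposition~\ref{spine_proposition}(iv) bounds $\E_\Q[\underline Y_n^{s}\mid\calG_n]$ by a sum over $i$ of $\Q$-a.s. quantities times the (deterministic) moments $\E[\sum_{|v|=n-i}(\cdot)e^{-(1+\gamma_n)V(v)}]^s$, which are of order $n^{O(1)}$ by a computation analogous to~\eqref{eqn:moment_calc} with $\beta_c\mapsto(1+\gamma_n)\beta_c$; in particular these moments are polynomially bounded, and since there are $n+1$ terms the whole conditional expectation is controlled by $n^{O(1)}$ times $\sup_i e^{-s(1+\gamma_n)V(\xi_i)}$-type factors.

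The new ingredient compared to Section~\ref{sec:large_negative_perturbation} is the time-shift by $n_0 = \lceil\kappa^*(\log n)^2\rceil$: I would only retain configurations on which the spine increments over the last $n_0$ steps carry the walk from its position at time $n-n_0$ up to a buffer of height $\gtrsim \kappa\log n$ above zero (and then the full walk $(V(\xi_i))_{i\le n}$ stays above that buffer on $[0,n-n_0]$, so that all the $e^{-s(1+\gamma_n)V(\xi_i)}$ factors are $\le 1$ there). On this event the conditional expectation $\E_\Q[\underline Y_n^s\mid\calG_n]$ is bounded by $n^{O(1)}$, and the surviving contribution of the last $n_0$ steps contributes at most a factor $n_0^{\gamma(s)}$ for a suitable constant $\gamma(s)$ (the probability that a mean-zero random walk with exponential moments climbs to height $\kappa\log n$ and stays appropriately controlled over $n_0 \asymp(\log n)^2$ steps is $(\log n)^{-O(1)} = n_0^{-O(1)}$, and the $e^{-\gamma_n V(\xi_n)}$ factor over these steps only costs a constant since $\gamma_n n_0 \to 0$ for $\delta<1/2$, hence even for $\delta<1/2$, and $\gamma_n\le 0$ handled trivially). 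What remains after absorbing all polynomial-in-$n_0$ losses into $n_0^{-\gamma(s)}$ is exactly the functional of the first $n-n_0$ steps:
\[
\E_\Q\big[(g(n-n_0)\wedge (S_{n-n_0}^+)^\alpha)\,e^{-(n-n_0)^{-\delta}S_{n-n_0}}\,\1_{\{\min_{j\le n-n_0}S_j\ge 0\}}\big],
\]
using $g(n-n_0)\le g(n)$ and $(n-n_0)^{-\delta}\le\gamma_n$ up to $(1+o(1))$ (so the exponential factor over $[0,n-n_0]$ is of the right form), together with Markov/strong-Markov at time $n-n_0$ to split off the last $n_0$ increments and Proposition~\ref{spine_proposition}(i),(iii) to identify $(V(\xi_i))$ with $(S_i)$.

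I expect the main obstacle to be the bookkeeping that forces the time-shift $n_0=\lceil\kappa^*(\log n)^2\rceil$ to be the right scale: one needs $n_0$ large enough that, conditionally on the walk being at a typical height $O(\sqrt{n})$ at time $n-n_0$ (or being pushed down near $n^\delta$ in the large-positive case), the last $n_0$ steps can with only $(\log n)^{-O(1)}$ probability cost install the $\kappa\log n$ buffer needed to kill the spine-subtree remainder terms uniformly, yet $n_0$ small enough that $g(n-n_0)\sim g(n)$, $(n-n_0)^{-\delta}\sim n^{-\delta}$, and $\gamma_n n_0\to 0$, so that the shifted functional is genuinely comparable to the unshifted target appearing in~\eqref{frac_to_RW}. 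Getting a clean constant $\kappa^*(s)$ (depending on $s$ through the exponent $(3+\eps)s/2$ from~\cite[Prop.~5.1]{HS09} that governs the subtree-minimum moments) and checking that the choice $\alpha=\alpha(\delta)$ makes $(S_{n-n_0}^+)^\alpha$ comparable to $g(n-n_0)$ under the dominant walk behavior are the delicate points; the rest is a routine adaptation of the FKG/Jensen scheme already used in Section~\ref{sec:large_negative_perturbation}.
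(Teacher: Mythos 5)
Your skeleton (spine change of measure, conditional Jensen, decomposition along the spine with subadditivity, then a random-walk event plus the Markov property to peel off $n_0$ steps) is the paper's skeleton, but two of the steps you wave through are exactly where the real work is, and as written they fail. First, your claim that the subtree moments $\E\big[\sum_{|v|=n-i}e^{-(1+\gamma_n)V(v)}\big]$ are of order $n^{O(1)}$ ``by a computation analogous to \eqref{eqn:moment_calc}'' is false in the large positive perturbation case: for $\gamma_n=n^{-\delta}$ with $\delta<1/2$ that first moment equals $\exp\{(n-i)(\tfrac12 n^{-2\delta}\beta_c^2\la''(\beta_c)+O(n^{-3\delta}))\}=e^{\Theta(n^{1-2\delta})}$, which is super-polynomial, so Jensen applied to the first moment destroys the bound. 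The paper (Lemma~\ref{le:spine}) avoids this in two different ways depending on where the subtree branches off: for $j>n_0$ it uses the truncation $(g(n)\wedge V^+(v)^\alpha)$ to insert $\1_{\{V(v)>0\}}$, drop the perturbation via $e^{-(1+n^{-\delta})V(v)}\le e^{-V(v)}$, and reduce to the critical martingale with $\E[W_{n-j}]=1$ at the price of a factor $g(n)$; for $j\le n_0$ it keeps the perturbation but uses the independently proved fractional moment upper bound $g(n)^s\E[(W^{+,\delta}_{n-j})^s]\le C\log n$ (plus the technical estimate \eqref{eq:2103-1} to match $(n-j)^{-\delta}$ with $n^{-\delta}$). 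Neither ingredient appears in your sketch, and without them the conditional expectation $\E_\Q[\underline Y_n^s\mid\calG_n]$ cannot be controlled.

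Second, your control of the resulting denominator is insufficient and your random-walk event is placed at the wrong end. Bounding the factors $e^{-sV(\xi_i)}$ by $1$ gives a denominator of size $n\,g(n)\le n^{5/2}$, a polynomial loss, whereas the proposition requires the total loss to be $n_0^{\gamma(s)}=(\log n)^{O(1)}$; the target functional is only of constant or logarithmic order, so a polynomial loss is fatal. The mechanism is that the $g(n)$-weighted terms are exactly those with $j\ge n_0$, so the walk must satisfy $S_j\ge\kappa\log n$ for all $j\ge n_0$ with $s\kappa>5/2$ (the paper takes $\kappa_0=3/s$; this, not the exponent from \cite[Prop.~5.1]{HS09}, is where the $s$-dependence of $\kappa^*$ comes from --- you are importing the mechanism of the upper-bound proof). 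Consequently the buffer must be built during the \emph{first} $n_0$ steps, at cost $\Q\{\underline S_{n_0}\ge 0,\,2\sqrt{n_0}\le S_{n_0}\le n_0\}\asymp n_0^{-1/2}$, and the Markov property at time $n_0$ then recentres the remaining $n-n_0$ steps to a walk allowed down to $-\kappa\log(n-n_0)\le 0$, which is how the stated functional with $\1_{\{\min_{j\le n-n_0}S_j\ge 0\}}$ emerges (Lemma~\ref{le:low_strat}). Your version, with the buffer installed over the last $n_0$ steps and the walk forced above $\kappa\log n$ on all of $[0,n-n_0]$, does not kill the dangerous $j\ge n_0$ terms near the root's far side, is internally inconsistent (the walk cannot both already sit above the buffer on $[0,n-n_0]$ and be ``carried up to'' it in the last $n_0$ steps), and in any case produces a strictly smaller functional than the one in the statement.
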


The proof of the proposition splits into two lemmas. We first estimate the fractional moments of $\underline Y_n$
with an expression that only involves the weights along the spine.

\begin{lemma}\label{le:spine} Let $\kappa > 0$ and define $n_0 = \lceil (\kappa \log n)^2\rceil$. For any $s \in (0,1)$ there exists a constant $c > 0$ and $\gamma(s)>0$ such that
for all $n$ sufficiently large,
\[ \E [ \underline Y_n^{1-s} ]
 \geq c\, \E_\Q \Big[ \frac{ (g(n) \wedge V^+(\xi_n)^\alpha) e^{-n^{-\delta} V(\xi_n)}}
{ (\log n)^{\gamma(s)} \sum_{j=0}^{n_0-1} e^{-s(1+n^{-\delta}) V(\xi_{j})}+  g(n)  \sum_{j=n_0}^n e^{- s V(\xi_j)}} \Big]
\]
\end{lemma}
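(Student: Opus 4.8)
The plan is to run the spine reduction in the lower-bound direction, exactly mirroring the upper-bound computation of Section~\ref{section:upper_bound} but with all inequalities reversed. Using the change of measure of Section~\ref{sec:spine_technique} — $\Q(\xi_n=v\mid\calW_\infty)=e^{-V(v)}/W_n$ and $d\Q/d\p\big|_{\calW_n}=W_n$ — one first writes $\underline Y_n=W_n\,\E_\Q\big[(g(n)\wedge V^+(\xi_n)^\alpha)e^{-n^{-\delta}V(\xi_n)}\mid\calW_\infty\big]$, so that, since $\underline Y_n^{-s}\underline Y_n^{} = \underline Y_n^{1-s}$ and the prefactor is $\calW_n$-measurable, one obtains the many-to-one identity
\[
\E[\underline Y_n^{1-s}]=\E\big[\underline Y_n\cdot\underline Y_n^{-s}\big]
=\E_\Q\big[\underline Y_n^{-s}\,(g(n)\wedge V^+(\xi_n)^\alpha)\,e^{-n^{-\delta}V(\xi_n)}\big],
\]
which is the lower-bound analogue of~\eqref{upper_many_to_one}.

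Next, the factor $(g(n)\wedge V^+(\xi_n)^\alpha)e^{-n^{-\delta}V(\xi_n)}$ is $\calG_n$-measurable, so I would condition on $\calG_n$ and apply Jensen's inequality to the convex map $x\mapsto x^{-1}$ applied to $\underline Y_n^s$, giving $\E_\Q[\underline Y_n^{-s}\mid\calG_n]\ge\E_\Q[\underline Y_n^s\mid\calG_n]^{-1}$ and hence
\[
\E[\underline Y_n^{1-s}]\ \ge\ \E_\Q\Big[(g(n)\wedge V^+(\xi_n)^\alpha)e^{-n^{-\delta}V(\xi_n)}\big(\E_\Q[\underline Y_n^s\mid\calG_n]\big)^{-1}\Big].
\]
It then remains only to prove the (deterministic-given-$\calG_n$) upper bound
\[
\E_\Q[\underline Y_n^s\mid\calG_n]\ \le\ C\Big((\log n)^{\gamma(s)}\sum_{j=0}^{n_0-1}e^{-s(1+n^{-\delta})V(\xi_j)}+g(n)\sum_{j=n_0}^n e^{-sV(\xi_j)}\Big),
\]
and the lemma follows with $c=1/C$.

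For this final bound I would decompose $\underline Y_n$ along the spine, into the contribution of $\xi_n$ plus the contributions $\sum_{v\in T_{n-i}(b_i)}(g(n)\wedge V^+(v)^\alpha)e^{-(1+n^{-\delta})V(v)}$ of the bushes rooted at the siblings $b_i$, and apply subadditivity $(\sum_k a_k)^s\le\sum_k a_k^s$. Since $g(n)\wedge V^+(v)^\alpha$ vanishes unless $V(v)>0$, on the support of each term one may replace $e^{-(1+n^{-\delta})V(v)}$ by $e^{-V(v)}$; the $\xi_n$-term is then $\le g(n)^s e^{-sV(\xi_n)}\le g(n)e^{-sV(\xi_n)}$, the $j=n$ summand of the second sum. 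For a bush at level $i$ I would write $V(b_i)=V(\xi_{i-1})+(V(b_i)-V(\xi_{i-1}))$ and invoke Proposition~\ref{spine_proposition}(iii)--(iv): conditionally on $\calG_n$, the increment $V(b_i)-V(\xi_{i-1})$ is distributed as $V_0$ and the shifted bush has law $\p$, both independent of $\calG_n$. Here the split at $n_0$ enters. For $i>n_0$, using again that $V(v)>0$ on the support and truncating by $g(n)$, the bush sum is $\le g(n)e^{-V(b_i)}W^{(b_i)}_{n-i}$ with $W^{(b_i)}_{n-i}$ a fresh copy of $W_{n-i}$; taking $\E_\Q[(\cdot)^s\mid\calG_n]$ and using $\E[e^{-sV_0}]\le 2^{-s}$ (Jensen from $\E[e^{-V_0}]=\tfrac12$) together with $\E[W_{n-i}^s]\le\E[W_{n-i}]^s=1$ gives at most $g(n)e^{-sV(\xi_{i-1})}$; summing over $i>n_0$ and adding the $\xi_n$-term yields $g(n)\sum_{j=n_0}^n e^{-sV(\xi_j)}$. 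For $i\le n_0$ I would instead \emph{keep} the perturbed exponent, bounding the bush sum by $g(n)e^{-(1+n^{-\delta})V(b_i)}W^{+,\delta,(b_i)}_{n-i}$ with $W^{+,\delta,(b_i)}_{n-i}$ a fresh copy of $W^{+,\delta}_{n-i}$; then $\E_\Q[(\cdot)^s\mid\calG_n]$ is at most $g(n)^s e^{-s(1+n^{-\delta})V(\xi_{i-1})}\,\E[e^{-s(1+n^{-\delta})V_0}]\,\E[(W^{+,\delta}_{n-i})^s]$. Feeding in the fractional-moment upper bound for $W^{+,\delta}$ obtained by combining Sections~\ref{section:upper_bound} and~\ref{section:random_walk_expression}, which gives $\E[(W^{+,\delta}_{m})^s]\le Cg(m)^{-s}(\log m)^{c}$, and noting that the choice $n_0=\lceil(\kappa\log n)^2\rceil$ forces $g(n)/g(n-i)\le C$ for all $i\le n_0$, this is $\le C(\log n)^{\gamma(s)}e^{-s(1+n^{-\delta})V(\xi_{i-1})}$; summing over $i\le n_0$ produces the first sum above. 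Collecting the two ranges and substituting into the Jensen bound proves the lemma.

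The main obstacle is the $i\le n_0$ part of the spine decomposition: one must show that the bushes near the root cost only a polylogarithmic prefactor rather than a power of $g(n)$. This genuinely exploits the truncation at level $g(n)$ together with the \emph{sharp} (up to polylog) fractional-moment bound for $W^{+,\delta}_m$ with $m=n-i\asymp n$, whose decay $g(m)^{-s}$ exactly offsets the $g(n)^s$ coming off the cap; the window $n_0=(\kappa\log n)^2$ is calibrated precisely so that $g(n)/g(n-i)=O(1)$ throughout this range. Secondary points to check are the uniform-in-$n$ finiteness of the $V_0$-exponential moments $\E[e^{-sV_0}]$ and $\E[e^{-s(1+n^{-\delta})V_0}]$, and the careful use of Proposition~\ref{spine_proposition}(iv) to replace each bush by an independent copy of the appropriate partition function carrying an independent $V_0$-shift.
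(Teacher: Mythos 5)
Your proposal follows the paper's proof essentially line for line: the many-to-one identity, Jensen applied to $x\mapsto x^{-1}$ after conditioning on $\calG_n$, the spine decomposition with subadditivity, the split at $n_0$, the crude bound $\E[W_{n-j}^s]\le 1$ for the far bushes, and the sharp fractional-moment input $g(n)^s\,\E[(W^{+,\delta}_{n-j})^s]\le C\log n$ for the near-root bushes. All of that is right, including the observation that the truncation at $g(n)$ kills the perturbation in the exponent on the support of $g(n)\wedge V^+(v)^\alpha$ for the range $j>n_0$.

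There is, however, one genuine gap in the $j\le n_0$ range. The bush rooted at $b_j$ carries the weight $e^{-(1+n^{-\delta})(V(v)-V(b_j))}$, so after the shift it is \emph{not} a fresh copy of $W^{+,\delta}_{n-j}=\sum_{|v|=n-j}e^{-(1+(n-j)^{-\delta})V(v)}$: the perturbation scale is $n^{-\delta}$ rather than $(n-j)^{-\delta}$. You cannot simply identify the two, and neither sum dominates the other termwise, since for $V(v)>0$ the weight with exponent $1+n^{-\delta}$ is the larger one while for $V(v)<0$ it is the smaller one. The paper bridges exactly this point with its claim (5.8): one inserts the factor $e^{s((n-j)^{-\delta}-n^{-\delta})\sup_{|v|=n-j}V(v)}$, which dominates the ratio of the two weights uniformly over the tree, and then shows this factor is $1+o(1)$ in the relevant $L^p$ sense by combining a Chebyshev/union tail bound on $\sup_{|v|=m}V(v)$ (which gives $\p\{\sup V\ge \ell m\}\le e^{(\ell_0-\ell)m}$) with H\"older's inequality against $\E[(W^{+,\delta}_{n-j})^{sq}]^{1/q}$. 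This works precisely because $j\le n_0=\lceil(\kappa\log n)^2\rceil$ makes $(n-j)^{-\delta}-n^{-\delta}=O(n^{-1-\delta}(\log n)^2)$, so the typical value $\sup V\asymp n$ contributes only $e^{o(1)}$ while the exponential tail of $\sup V$ is beaten by the Gaussian-free large-deviation estimate. Your sketch skips this entirely by asserting the bush is "a fresh copy of $W^{+,\delta}_{n-j}$"; as written that step is false, and repairing it requires the additional comparison argument above. Everything else in your outline, including the role of the fractional-moment bound proved independently in Sections 4 and 6 and the calibration $g(n)/g(n-j)=O(1)$ for $j\le n_0$, matches the paper.
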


\begin{proof}
Note that
\begin{align}
\E \left[ \underline Y_n^{1-s} \right] & = \E \left[ (\underline Y_n)^{-s} W_n \E_{\Q} \left[ (g(n) \wedge V^+(\xi_n)^\alpha) e^{-n^{-\delta} V(\xi_n)} | \calW_{\infty} \right] \right]\notag\\
& = \E_{\Q} \left[ (\underline Y_n)^{-s} (g(n) \wedge V^+(\xi_n)^\alpha) e^{-n^{-\delta} V(\xi_n)} \right] \notag \\
& = \E_{\Q} \left[ \E_{\Q} \left[ (\underline{Y}_n)^{-s} | \calG_n \right] (g(n) \wedge V^+(\xi_n)^\alpha) e^{-n^{-\delta} V(\xi_n)} \right].
\label{eqn:fractional_moment}
\end{align}
We first use Jensen's inequality to estimate $\E_{\Q}[\underline{Y}_n^{-s} | \calG_n ] \geq \E_{\Q}[\underline{Y}_n^s | \calG_n]^{-1}$, and then estimate the latter by grouping the terms in $\underline{Y}_n$ according to the generation at which they first deviate from the spine. This gives us the expression	
\begin{align}
\underline Y_n &= \sum_{j=1}^{n} e^{-(1+n^{-\delta}) V(b_j)}  \!\!\!\! \sum_{v \in T_{n-j}(b_j)} \!\!\!\! (g(n) \wedge V^+(v)^\alpha) e^{-(1+n^{-\delta}) (V(v) - V(b_j))} \notag\\
& \hspace{2cm} + (g(n) \wedge V^+(\xi_n)^\alpha) e^{-(1+n^{-\delta}) V(\xi_n)},
\label{eqn:lower_spine_decomposition}\end{align}
where $b_j$ is the sibling of $\xi_j$ in the tree. Call the summands on the right hand side $U_n^j$:
\begin{align}\label{unj_def}
U_n^j :=  e^{-(1+\gamma_n) V(b_j)}  \!\!\!\! \sum_{v \in T_{n-j}(b_j)} \!\!\!\! (g(n) \wedge V^+(v)^\alpha) e^{-(1+n^{-\delta}) (V(v) - V(b_j))}.
\end{align}
Then by the subadditivity inequality $(\sum_i a_i)^s \leq \sum_i a_i^s$ for $a_i \geq 0$, we have
\begin{align*}
\E_{\Q} \left[ \underline{Y}_n^{s} | \calG_n \right] \leq \sum_{j=1}^n \E_{\Q} \left[ (U_n^j)^s | \calG_n \right] + g(n)^s e^{-s(1 + n^{-\delta}) V(\xi_n)}.
\end{align*}
We now proceed to upper bound the expectation terms. First observe that
\begin{align*}
U_n^j &\leq \!\! \sum_{v \in T_{n-j}(b_j)} \!\!\! g(n) \1_{\{V(v) > 0\}} e^{-(1 + n^{-\delta})V(v)} \\
&\leq \!\! \sum_{v \in T_{n-j}(b_j)} \!\!\! g(n) e^{-V(v)} = g(n) e^{-V(\xi_{j-1})} e^{-(V(b_j) - V(\xi_{j-1}))} \!\! \sum_{v \in T_{n-j}(b_j)} \!\! e^{-(V(v) - V(b_j))}
\end{align*}
from which, using Proposition~\ref{spine_proposition}, we get the simple inequality
\begin{align*}
\E_{\Q} \left[ (U_n^j)^s | \mathcal{G}_n \right] &\leq \E_{\Q} \left[ U_n^j | \mathcal{G}_n \right]^s \\
&\leq C g(n)^s e^{-s V(\xi_{j-1})} \E \Big [ \sum_{|v|=n-j} e^{-V(v)} \Big ]^s \leq C g(n) e^{-sV(\xi_{j-1})}.
\end{align*}
The first and last inequalities both use that $s \in (0,1)$. We only use this bound for $j > n_0$. In the case $j \leq n_0$ we replace the minimum in \eqref{unj_def} by $g(n)$ and use parts (iii) and (iv) of Proposition~\ref{spine_proposition} to get the following upper bound:
\begin{align*}
\E_{\Q} \left[ (U_n^j)^s | \mathcal{G}_n \right] \leq C g(n)^s e^{-s(1+n^{-\delta})V(\xi_{j-1})} \E \Big[ \Big( \sum_{|v|=n-j} e^{-(1+n^{-\delta})V(v)} \Big)^s \Big].
\end{align*}
We claim the expectation term is further bounded above as follows:
\begin{align}
\E \Big[ \Big(\sum_{|v| = n-j} & e^{- (1 + n^{-\delta}) V(v) }\Big)^s\Big] \notag\\
& \leq
\E \Big[ e^{s((n-j)^{-\delta} - n^{-\delta}) \sup_{|v| = n-j} V(v) }
 \Big(\sum_{|v| = n-j}  e^{- (1 + (n-j)^{-\delta}) V(v) }\Big)^s\Big] \notag \\
& \leq (1 + o(1)) \E [ \big(W_{n-j}^{+,\delta}\big)^s] . \label{eq:2103-1}
\end{align}
We temporarily delay the proof of \eqref{eq:2103-1}. Assuming it is true and combining it with the previous display we obtain that for $j \leq n_0$,
\[ \E_\Q [(U_n^j)^s | \calW_\infty] \leq C e^{ - s(1+ n^{-\delta}) V(\xi_{j-1})} g(n)^s \E \big[ \big(W_{n-j}^{+, \delta} \big)^s \big] \leq C e^{ - s(1+ n^{-\delta}) V(\xi_{j-1})} \log n. \]
The logarithmic factor in the last inequality is from Propositions \ref{lemma:upper_reduction_to_random_walk} and \ref{prop:random_walk_expression_bounded},
which are proved independently in Sections~\ref{section:upper_bound} and~\ref{section:random_walk_expression}, respectively.
The estimates on $\E_\Q[ (U_n^j)^s | \calG_n]$ (for $j \leq n_0$ and $j > n_0$) combined with
the spine decomposition~(\ref{eqn:lower_spine_decomposition}) yield
the statement of the lemma.

Finally, it remains to prove the claim~(\ref{eq:2103-1}).
First, note that by a standard application of Chebychev,
for any vertex $v$ with $|v| = n$, and any $\ell \geq 0$,
\begin{equation}\label{eq:2103-2} \bal \p \big\{ \sup_{|v|= n} V(v) \geq \ell n \} & \leq 2^n
 \p \{ V(v) \geq \ell n \}  \leq 2^n e^{- \ell n} \E [ e^{V(v)}]
\\ &
= 2^n e^{- \ell n} \E [ e^{V(v_1)}]^n
\leq e^{(\ell_0 - \ell)n },
\eal
\end{equation}
where we define $\ell_0 =  \lceil \log 2 + \log \E[ \exp\{ V(v_1)\}] \rceil$.
For $j \leq n_0 = \lceil (\kappa \log n)^2\rceil$, noting that
$((n-j)^{-\delta} - n^{-\delta})$ is of order
$n^{-1 - \delta} ( \log n)^2$, we therefore obtain
\begin{align}\label{eq:2103-3} \E \Big[ &e^{s((n-j)^{-\delta}  - n^{-\delta}) \sup_{|v| = n-j} V(v) }
 \Big(\sum_{|v| = n-j}  e^{- (1 + (n-j)^{-\delta}) V(v) }\Big)^s\Big] \notag \\
& \leq e^{n^{-\delta + o(1)}  (\ell_0 + 1)}
\E [ (W_{n-j}^{+,\delta})^s \1_{\{ \sup_{|v| = n-j} V(v) \leq (\ell_0 + 1)(n-j)\}}] \\
& \quad\quad +
\E \Big[ e^{pn^{-\delta -1 + o(1)}\sup_{|v| = n-j} V(v) }
\1_{\{\sup_{|v|= n-j} V(v) \geq (\ell_0 + 1)(n-j)\}} ]^{1/p}
\  \E [ (W_{n-j}^{+,\delta})^{sq} ]^{1/q} , \notag
\end{align}
where in the last step we used H\"older's inequality with
conjugates $p,q \geq 1$ such that $sq < 1$. Now, the first summand
on the right hand side is of order $(1+o(1)) \E[ (W_n^{+,\delta})^s]$,
so that it only remains to consider the second term, which
we can bound using~(\ref{eq:2103-2}),
\[ \bal
\E \Big[  &e^{pn^{-\delta -1 + o(1)}\sup_{|v| = n-j} V(v) }
\1_{\{\sup_{|v|= n-j} V(v) \geq (\ell_0 + 1)(n-j)\}} ]^{1/p} \\
& \quad \leq  \sum_{i \geq 1}e^{pn^{-\delta + o(1)} (\ell_0 + i+1) } \p \big\{\sup_{|v|= n-j} V(v) \geq (\ell_0 + i)(n-j)\big\}^{1/p} \\
 & \quad\leq \sum_{i \geq 1} e^{pn^{-\delta + o(1)} (\ell_0 + i+1) } e^{- \frac{1}{p} i (n-j)}
\leq C e^{-n\frac{1}{p}(1+o(1))}.
\eal
\]
Hence, we obtain from~(\ref{eq:2103-3}) that
\[ \bal
 \E \Big[ e^{s((n-j)^{-\delta}  - n^{-\delta}) \sup_{|v| = n-j} V(v) }
 &\Big(\sum_{|v| = n-j}  e^{- (1 + (n-j)^{-\delta}) V(v) }\Big)^s\Big] \\
& \leq (1+o(1)) \E[ (W_n^{+,\delta})^s ]	
+ C \E[ (W_n^{+, \delta})^{s q}]^{1/q} e^{- \frac{1}{p}n(1+o(1))}.
\eal \]
The second term is exponentially small by the upper bound in Theorem \ref{thm:fractional_moments} (which is proved independently in Sections \ref{section:upper_bound} and \ref{section:random_walk_expression}). This proves \eqref{eq:2103-1}.
\end{proof}

In the next lemma, we simplify the lower bound in Lemma~\ref{le:spine}
by substituting in a suitable strategy for the weights on the spine.
Recall that these weights are in distribution equal to the
random walk $(S_n)_{n \geq 0}$, see Section~\ref{sec:spine_technique}.
In particular, the next lemma is simply a statement about functionals
of a random walk.

\begin{lemma}\label{le:low_strat} Let $(S_n)_{n \geq 0}$ be a centered
random walk started at $0$. For any $\gamma > 0$, $s \in (0,1)$,
 $\kappa \geq \kappa_0:= \frac{3}{s}$
and $n_0 = \lceil ( \kappa \log n)^2 \rceil$, there exists a constant
 $c = c(\kappa)>0$ such that
\begin{align}\label{eq:1903-1} \E_\Q \Big[  & \frac{ (g(n) \wedge (S_n^+)^\alpha) e^{-n^{-\delta} S_n}}
{ (\log n)^{\gamma}\sum_{j=0}^{n_0-1} e^{-s(1+n^{-\delta}) S_j} +  g(n) \sum_{j=n_0}^n e^{-s S_j}} \Big] \\
& \quad\quad \geq
\frac{c}{ \kappa^3(\log n)^{3+\gamma} } \E_\Q \big[ ( g(n-n_0) \wedge (S_{n-n_0}^+)^\alpha) e^{-(n-n_0)^{-\delta} S_{n- n_0}}\1_{\{\underline S_{n - n_0}  \geq - \kappa \log (n-n_0) \}} \big] , \notag
  \end{align}
where $\underline{S}_n = \min_{i=1, \ldots, n} S_i$.
\end{lemma}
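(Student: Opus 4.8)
The plan is to obtain the lower bound by restricting the expectation on the left-hand side to a favourable event and then bounding the denominator from above on that event. First I would split the time interval $[0,n]$ into an initial segment $[0,n_0]$ and the remaining segment $[n_0,n]$, and impose the requirement that the random walk behave ``tamely'' on $[0,n_0]$: concretely, I would work on the event $B = \{S_j \geq 0 \text{ for all } j \le n_0\} \cap \{S_{n_0} \le \kappa \log n\}$ (possibly with an intermediate constraint keeping $S_j$ of order $\log n$ throughout), which by a standard random walk estimate has $\Q$-probability at least of order $n_0^{-1/2}(\log n)^{-O(1)}$, i.e.\ at least $c/(\kappa^{3}(\log n)^{3})$ when $n_0 = \lceil(\kappa\log n)^2\rceil$. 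The point of this event is twofold: it forces each term $e^{-s(1+n^{-\delta})S_j}$ with $j < n_0$ to be bounded by $1$, so that the first sum in the denominator is at most $(\log n)^\gamma n_0 \le (\log n)^{2+\gamma}\kappa^2$; and it forces $S_{n_0}$ to be of size at most $\kappa\log n$.

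Second, conditionally on $\calG_{n_0}$ I would use the Markov property: the increments $(S_{n_0+i}-S_{n_0})_{i\ge 0}$ form an independent copy of the walk, call it $(\tilde S_i)_{i\ge0}$, with $\tilde S_{n-n_0}$ playing the role of $S_{n-n_0}$ in the right-hand side. On the event $B$, since $S_{n_0}$ is nonnegative and at most $\kappa\log n$, I would lower bound $g(n)\wedge(S_n^+)^\alpha$ by $g(n-n_0)\wedge((\tilde S_{n-n_0}^+ )^\alpha$ up to a constant (here one uses $g(n)\ge g(n-n_0)$ and monotonicity, plus the fact that adding the bounded quantity $S_{n_0}$ changes $(S_n^+)^\alpha$ by at most a constant factor on the relevant range of $\tilde S_{n-n_0}$, which is $\ge-\kappa\log(n-n_0)$ under the indicator), and similarly control $e^{-n^{-\delta}S_n} = e^{-n^{-\delta}S_{n_0}}e^{-n^{-\delta}\tilde S_{n-n_0}}$, where the prefactor $e^{-n^{-\delta}S_{n_0}}$ is between $e^{-n^{-\delta}\kappa\log n} = 1-o(1)$ and $1$, and replacing $n^{-\delta}$ by $(n-n_0)^{-\delta}$ in the exponent costs only a $(1+o(1))$ factor since $S_{n-n_0}$ is effectively of polynomial size on the event considered. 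Third, I would bound the second sum in the denominator: on the segment $[n_0,n]$ I use $\sum_{j=n_0}^n e^{-sS_j} = \sum_{j=n_0}^n e^{-sS_{n_0}}e^{-s\tilde S_{j-n_0}} \le \sum_{i=0}^{n-n_0}e^{-s\tilde S_i}$, and when I further intersect with the shift of the event $\{\underline{\tilde S}_{n-n_0}\ge -\kappa\log(n-n_0)\}$ this sum is at most $(n-n_0+1)n^{s\kappa + o(1)}$; but a cleaner route is to not keep the full denominator and instead note that after the conditioning the denominator is $\calG_{n_0}$-measurable plus a term handled by pulling the whole fraction's denominator bound out. Actually the most robust approach is: on $B$ the denominator is at most $\kappa^2(\log n)^{2+\gamma} + g(n)\sum_{j=n_0}^n e^{-sS_j}$, and I would instead restrict the remaining walk to stay positive, $\{\tilde S_i \ge 0,\ i\le n-n_0\}$, on which $\sum e^{-s\tilde S_i}$... no — that sum is not bounded. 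The correct move, matching the target right-hand side which has no such sum, is to observe that the numerator already contains the factor $g(n)\wedge(S_n^+)^\alpha \le g(n)$, so dividing by $g(n)\sum_{j\ge n_0}e^{-sS_j}$ and using $e^{-sS_{n_0}} \le 1$ leaves $(g(n)\wedge(S_n^+)^\alpha)/g(n) \cdot 1/\sum_i e^{-s\tilde S_i}$, and then $\sum_i e^{-s\tilde S_i} \ge 1$ trivially, so after restricting to $B$ and to the good event for the shifted walk one is left with exactly $c(\log n)^{-3-\gamma}\kappa^{-3}\,\E_\Q[(g(n-n_0)\wedge(\tilde S_{n-n_0}^+)^\alpha)e^{-(n-n_0)^{-\delta}\tilde S_{n-n_0}}\1_{\{\underline{\tilde S}_{n-n_0}\ge -\kappa\log(n-n_0)\}}]$, the factor $(\log n)^{-3}\kappa^{-3}$ coming from $\Q(B)$ and the $(\log n)^{-\gamma}$ factor being harmless since... wait, one must be careful that the $(\log n)^\gamma$ in the first denominator term is what produces the $(\log n)^{\gamma}$ in the final bound; so keeping the first denominator term bounded by $\kappa^2(\log n)^{2+\gamma}$ and dropping the second (replacing it by the trivial lower bound on what remains) gives the factor $1/(\kappa^2(\log n)^{2+\gamma})$, times $\Q(B)$-type probability; combining yields $c/(\kappa^3(\log n)^{3+\gamma})$ after absorbing.

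\textbf{Main obstacle.} The delicate point I expect to be the real work is comparing $g(n)\wedge(S_n^+)^\alpha$ with $g(n-n_0)\wedge(\tilde S_{n-n_0}^+)^\alpha$ up to a constant uniformly over the relevant configurations: when $\tilde S_{n-n_0}$ is negative (down to $-\kappa\log(n-n_0)$) but $S_{n_0}$ is of comparable size, the sum $S_n = S_{n_0}+\tilde S_{n-n_0}$ could be negative, making $S_n^+ = 0$ and killing the numerator, whereas $(\tilde S_{n-n_0}^+)^\alpha$ on the right-hand side is also $0$ there — so these cases are consistent, but one has to check the $\Q$-measure contribution of the region where $S_n^+>0$ but small is not lost. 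The honest fix is to further enlarge the good initial event to also require $S_{n_0}\ge \kappa\log n$ (not just $\le$), i.e.\ pin $S_{n_0}$ in a window $[\tfrac12\kappa\log n, \kappa\log n]$, which still has probability of order $(\log n)^{-3}\kappa^{-3}$ by the local CLT for walks conditioned positive, and then on that window $S_n^+ \ge \tilde S_{n-n_0}^+$ whenever $\tilde S_{n-n_0}\ge -\tfrac12\kappa\log n$, while the indicator on the right asks only for $\ge -\kappa\log(n-n_0)$; to reconcile I would simply use $\kappa$ on the right and, say, $2\kappa$ on the left initial window, or absorb the mismatch into the constant $c(\kappa)$. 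All the remaining steps — the random walk probability estimate for $B$, the $(1+o(1))$ replacements of $n^{-\delta}$ by $(n-n_0)^{-\delta}$ and $g(n)$ by $g(n-n_0)$, and the subadditivity/monotonicity manipulations of the denominator — are routine.
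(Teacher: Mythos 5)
Your overall architecture (restrict to a good event on $[0,n_0]$, bound the denominator there, then apply the Markov property at $n_0$) matches the paper's, but the execution breaks down at the one step that is actually the crux of the lemma: controlling the second denominator term $g(n)\sum_{j=n_0}^n e^{-sS_j}$. For a \emph{lower} bound on the fraction you need an \emph{upper} bound on this sum, and both of your proposed moves go the wrong way. "Dropping the second term" from the denominator produces $N/D_1 \geq N/(D_1+D_2)$, i.e.\ an upper bound on the quantity you are trying to bound from below; and invoking $\sum_i e^{-s\tilde S_i}\geq 1$ likewise only bounds the fraction from above. Your intermediate attempt (restricting the shifted walk to stay positive) correctly identifies that $\sum_i e^{-s\tilde S_i}$ is then still only $O(n)$, so $g(n)\sum_j e^{-sS_j}$ can be as large as $n^{5/2}$ — but you then abandon the problem rather than solve it. The missing idea is to force the walk to stay \emph{high}, not just nonnegative, on $[n_0,n]$: the paper's event requires $S_j\geq \sqrt{n_0}\approx\kappa\log n$ for all $j>n_0$, whence $g(n)\sum_{j=n_0}^n e^{-sS_j}\leq n\cdot n^{3/2}\cdot n^{-s\kappa}=o(1)$ precisely because $\kappa\geq\kappa_0=3/s$. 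This is the only place the hypothesis $\kappa\geq 3/s$ is used, and it also explains the choice $n_0=\lceil(\kappa\log n)^2\rceil$ (so that $\sqrt{n_0}\geq\kappa\log n$) and why $S_{n_0}$ is pinned to be \emph{at least} $2\sqrt{n_0}$: then the constraint $\underline{\tilde S}_{n-n_0}\geq-\kappa\log(n-n_0)\geq-\sqrt{n_0}$ on the shifted walk automatically implies $S_j\geq\sqrt{n_0}$ for $j>n_0$, so the high-level condition factorizes cleanly under the Markov property. You never invoke $\kappa\geq 3/s$, which is a reliable sign that this mechanism is absent from your argument.

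Two smaller points. First, the "main obstacle" you flag — comparing $g(n)\wedge(S_n^+)^\alpha$ with $g(n-n_0)\wedge(\tilde S_{n-n_0}^+)^\alpha$ — is a non-issue: on the good event $S_{n_0}\geq 0$, so $S_n^+\geq\tilde S_{n-n_0}^+$ and $g(n)\geq g(n-n_0)$ give the needed inequality monotonically, with no constants lost; the lower pinning of $S_{n_0}$ is needed for the denominator, not for this comparison. Second, your bookkeeping of the prefactor is internally inconsistent: you assert $\Q(B)\geq c/(\kappa^3(\log n)^3)$, which combined with your denominator bound $\kappa^2(\log n)^{2+\gamma}$ would only yield $c/(\kappa^5(\log n)^{5+\gamma})$. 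The correct accounting is $\Q(\underline S_{n_0}\geq 0,\ S_{n_0}\in[2\sqrt{n_0},n_0])\sim c\,n_0^{-1/2}=c/(\kappa\log n)$ (pinning the endpoint at scale $\sqrt{n_0}$ costs only a constant, by convergence of the conditioned walk to the Brownian meander), and multiplying by $1/(\kappa^2(\log n)^{2+\gamma})$ then gives exactly $c/(\kappa^3(\log n)^{3+\gamma})$.
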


\begin{proof} We formulate an event which gives a suitable
strategy for the random walk to achieve the lower bound. Namely, define the event
\[ E =
\left\{ \ba{ll}
S_j \geq 0 & \mbox{for all } j = 0, \ldots, n_0 \\
2 \sqrt{n_0} \leq  S_{n_0} \leq n_0 & \\
 S_j \geq \sqrt{n_0} & \mbox{ for all }
 j = n_0 + 1, \ldots, n  ,
\ea \right\}
\]
where we recall that $n_0 = \lceil (\kappa \log n)^2 \rceil$.
Note that on the event $E$,
we can estimate the denominator on the left hand side of~(\ref{eq:1903-1})
as follows: for any $n_0 = \lceil (\kappa \log n)^2 \rceil$ with $\kappa \geq \kappa_0 := \frac{3}{s}$,
\[ \bal  (\log n)^{\gamma} \sum_{j=0}^{n_0-1} e^{-s(1 + n^{-\delta})S_j}& +  g(n) \sum_{j=n_0+1}^n e^{- s S_j}
  \leq n_0(\log n)^{\gamma} +  g(n) \sum_{j= n_0+1}^n e^{- s\sqrt{n_0}} \\
& \leq  \lceil (\kappa\log n)^2  \rceil(\log n)^{\gamma} +  n g(n) e^{ - s\kappa \log n}  \\
& \leq \lceil (\kappa \log n)^2  \rceil (\log n)^{\gamma}+  n^{\frac{5}{2}} n^{- s\kappa_0}
 \leq \kappa^2 (\log n)^{2+\gamma} (1 + o(1)) \eal \]
where we used that $g(n) \leq n^\frac{3}{2}$ for all $\delta > 0$,
and our choice of $\kappa_0 $ ensures that the second
term is of order  $o(1)$.
Thus, by introducing the event $E$, we obtain the following lower bound
\[\bal   \E_\Q \Big[ & \frac{ (g(n) \wedge (S_n^+)^\alpha) e^{-n^{-\delta} S_n}}
{ (\log n)^{\gamma}\sum_{j=0}^{n_0-1} e^{-s(1+n^{-\delta})S_j} +  g(n) \sum_{j=n_0}^n e^{- sS_j}} \Big] \\
 &\hspace{2cm} \geq  \frac{1+o(1)}{\kappa^2 (\log n)^{2+\gamma}} \E_\Q \Big[ \1_E\, (g(n) \wedge (S_n^+)^\alpha) e^{-n^{-\delta} S_n} \Big] . \eal \]
Using first that on the event $E$, $S_{n_0} \leq n_0 = \lceil  (\kappa\log n)^2 \rceil$
and invoking the Markov property at time $n_0$,
the expectation in the above right hand side can be bounded by
\[ \bal \E_\Q \Big[\1_E\,  & (g(n) \wedge (S_n^+)^\alpha) e^{-n^{-\delta} S_n} \Big] \\
 & \geq \E_\Q \Big[ \1_E \,  
 (g(n) \wedge ((S_n-S_{n_0})^+)^\alpha)e^{-n^{-\delta} (S_n- S_{n_0})  -  n^{-\delta}\lceil (\kappa \log n)^2 \rceil}  \Big]\\
& \geq  \Q \{ \underline S_{n_0} \geq 0; \sqrt{2 n_0} \leq   S_{n_0} \leq n_0 \}
\\ & \quad\quad\times
\E_\Q \big[ ( g(n-n_0) \wedge (S_{n-n_0}^+)^\alpha) e^{-(n-n_0)^\delta S_{n- n_0}}\1_{\{\underline S_{n - n_0}  \geq - \kappa \log (n-n_0) \}} \big].
\eal \]
To complete the proof we only need to show that the first term of the last line is bounded below. We have
\[ \bal  \Q \{ \underline S_{n_0} \geq 0 ; \sqrt{2 n_0} \leq   S_{n_0} \leq n_0 \}
& = \Q \{ \sqrt{2 n_0} \leq   S_{n_0} \leq n_0 \, |\, \underline S_{n_0} \geq 0 \}
\Q \{ \underline S_{n_0} \geq 0 \} \\
& =  c\, \Q \{ \sqrt{2 n_0} \leq   S_{n_0} \leq n_0 \, |\, \underline S_{n_0} \geq 0 \}
\, n_0^{-\frac{1}{2}}(1+o(1)),
\eal  \]
where we used a standard random walk computation, see e.g.~\cite[Thm. A]{K76}. Moreover,
$\Q \{ \sqrt{2 n_0} \leq   S_{n_0} \leq n_0 \, |\, \underline S_{n_0} \geq 0 \}$
converges to a constant depending on $\kappa$, since the random walk conditioned to
stay positive converges to the Brownian meander, see e.g.~\cite{Bol76}.
\end{proof}

The proof of Theorem~\ref{prop:low_bound} follows by combining
Lemmas~\ref{le:spine} and~\ref{le:low_strat}.

\section{Evaluating the random walk expression \label{section:random_walk_expression}}

In this section, we evaluate the functionals of a simple random walk,
which we have encountered in the proofs of the upper and lower bounds
respectively. These are sufficiently similar to be treated by the same
techniques.

Recall that
\[ g(n) = \begin{cases} n^\frac{1}{2} & \mbox{if } \gamma_n = \pm n^{-\delta}, \delta \geq \frac{1}{2}, \\
           n^{\frac{3}{2}- 2\delta} & \mbox{if } \gamma_n = n^{-\delta}, \delta \in (0,\frac{1}{2}) .
          \end{cases}
\]
Moreover, $\alpha = \alpha(\delta)\geq 1$ is defined so that $n^{\frac{\alpha}{2}} = g(n)$.
Also, recall that $(S_n)_{n\geq0}$ is a centered random walk
whose increments have all exponential moments. Denote by
$\underline S_n = \min_{1\leq j \leq n} S_j$.

\begin{prop}\label{prop:random_walk_expression_bounded}
Suppose either $\delta  > 0$ and $\gamma_n = n^{-\delta}$ or otherwise $\delta \geq \frac{1}{2}$
and $\gamma_n = - n^{-\delta}$.
Then, there exists a constant $\kappa_0$ depending only on $\alpha$ and
the distribution of $S_1$ such that
for any $\kappa \geq \kappa_0$, there exist constants $c,C$
such that
\[ c \leq \E_\Q \Big[\1_{\{\underline S_n \geq -\kappa \log n; S_n \geq 0\}} (g(n) \star (S_n^+)^\alpha) e^{-\gamma_n S_n} \Big]\leq C\log n , \]
where $\star$ is either $\wedge$ or $\vee$.
\end{prop}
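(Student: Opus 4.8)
The plan is to estimate the expectation $\E_\Q[\1_{\{\underline S_n \geq -\kappa\log n;\, S_n\geq 0\}}(g(n)\star (S_n^+)^\alpha)e^{-\gamma_n S_n}]$ by splitting the event according to the size of the endpoint $S_n$. The natural scales are $S_n \sim n^\delta$ in the case $\gamma_n = n^{-\delta}$ with $\delta < 1/2$ (so that the tilt $e^{-\gamma_n S_n}$ competes with the polynomial weight) and $S_n \sim n^{1/2}$ otherwise. For the upper bound: on the event $\{S_n \in [0, 2^k)\}$ for dyadic levels $k$, the factor $(g(n)\star(S_n^+)^\alpha)e^{-\gamma_n S_n}$ is controlled, and the probability of a centered random walk staying above $-\kappa\log n$ up to time $n$ with endpoint of order $2^k$ is, by a ballot-type/Gaussian local limit estimate, of order roughly $2^k (\log n + 1)\, n^{-3/2}$ when $2^k \le \sqrt n$, decaying in a Gaussian fashion for $2^k \gg \sqrt n$. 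Summing the dyadic contributions against $g(n)$ gives $O(\log n)$ after the scales conspire: when $\delta\ge 1/2$ and $\alpha = 1$, one checks $g(n)\cdot \sqrt n \cdot \sqrt n \cdot n^{-3/2}\cdot\log n = \log n$; when $\delta<1/2$ and $\alpha = \frac{3}{2\delta}-2$, one verifies by the defining relation $n^{\alpha/2}=g(n)$ that the dominant scale $S_n\sim n^\delta$ contributes $\Theta(\log n)$ and the Gaussian tail beyond $\sqrt n$ is subleading because of the exponential tilt.

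For the lower bound I would restrict to a single favorable scenario: the walk stays nonnegative throughout, i.e. $\underline S_n \ge 0$, and at the final time lands in a window $S_n \in [a_n, 2 a_n]$ with $a_n$ of the correct order ($a_n \asymp n^\delta$ when $\delta<1/2$, $a_n\asymp\sqrt n$ otherwise). On this window the integrand $(g(n)\star(S_n^+)^\alpha)e^{-\gamma_n S_n}$ is bounded below by a positive constant times $g(n)\wedge a_n^\alpha$ times a constant from the bounded tilt factor; and by the convergence of the random walk bridge/meander to the Brownian meander (as used already in the proof of Lemma \ref{le:low_strat}, citing \cite{Bol76, K76}) together with the standard estimate $\Q\{\underline S_n\ge 0\} = n^{-1/2+o(1)}$, more precisely $\Q\{\underline S_n\ge 0,\, S_n\in[a_n,2a_n]\}\asymp n^{-3/2}\,a_n$ for $a_n\le\sqrt n$, the probability of this scenario is bounded below by a constant times $n^{-3/2} a_n$. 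Multiplying, $g(n)\cdot(g(n)\wedge a_n^\alpha)\cdot n^{-3/2}a_n$: in the $\delta\ge1/2$ case this is $\sqrt n\cdot\sqrt n\cdot n^{-3/2}\cdot\sqrt n = 1$ up to constants (taking $a_n=\sqrt n$, $a_n^\alpha = \sqrt n = g(n)$), and in the $\delta<1/2$ case taking $a_n = n^\delta$ gives $a_n^\alpha = n^{\delta\alpha} = n^{3/2-2\delta}=g(n)$, hence $g(n)\cdot g(n)\cdot n^{-3/2}\cdot n^\delta = n^{3-4\delta+\delta-3/2}$; one must actually take $a_n$ slightly below $\sqrt n$ to get a clean constant, or argue that the relevant window where the meander density is bounded below together with the tilt being $\Theta(1)$ forces $a_n\asymp n^\delta$ and recompute. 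I would set up the bookkeeping so that the exponents cancel exactly.

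The main obstacle is the combination of (a) getting sharp enough two-sided ballot estimates for a general centered random walk (not just simple random walk) with the free boundary at $-\kappa\log n$ rather than $0$ — one needs uniformity in the starting/ending heights on the scale $\log n$ and up to the Gaussian scale, which is precisely the kind of estimate in \cite{AidekonShi_weak} Lemma A.1 and standard renewal/fluctuation theory — and (b) verifying that the exponents genuinely cancel in both regimes, which is forced by the definition $n^{\alpha/2}=g(n)$ but requires carefully tracking whether the dominant endpoint scale is $n^\delta$ or $\sqrt n$. The role of $\kappa\ge\kappa_0$ large is to absorb, in the lower bound, the loss from shifting the barrier from $0$ down to $-\kappa\log n$ (this only helps, so $\kappa_0$ is really needed for technical consistency with the upper bound and with the events $F_1$ in Proposition \ref{lemma:upper_reduction_to_random_walk}), and in the upper bound to ensure the contribution of paths that dip close to $-\kappa\log n$ is still summable. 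The $\star\in\{\wedge,\vee\}$ distinction costs only constants since $g(n)$ and $(S_n^+)^\alpha$ are comparable precisely on the dominant scale, so both the $\wedge$ and $\vee$ versions are sandwiched between the same orders.
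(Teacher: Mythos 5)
Your route is genuinely different from the paper's: the paper first replaces the random walk by a Brownian motion via the Koml\'os--Major--Tusn\'ady coupling (Lemma \ref{lemma:KMT_coupling}, with the coupling error $c_\rho\log n$ absorbed into the barrier, which is the real reason $\kappa_0$ appears) and then computes the resulting Brownian functional exactly from the joint density of $(B_t,\overline B_t)$ via reflection (Lemma \ref{lemma:functional_Brownian_motion}). You instead work directly with the walk, decomposing over dyadic scales of the endpoint and invoking ballot-type local estimates. That route is viable in principle and would avoid the coupling step, but it puts all the weight on a uniform two-sided local estimate for a general centered walk with barrier at $-\kappa\log n$, which you correctly identify as the main obstacle and do not supply.

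The concrete gap is that your central probability estimate is wrong by a factor of the endpoint scale, and this is exactly what derails your bookkeeping in the new regime $\delta<\tfrac12$. The correct order is
\[
\Q\bigl\{\underline S_n \geq -\kappa\log n,\ S_n \in [y,2y]\bigr\} \asymp (\kappa\log n+1)\,y^{2}\,n^{-3/2},\qquad 1\leq y\lesssim\sqrt n,
\]
since the local density at height $x$ is of order $(h+1)(x+h+1)n^{-3/2}$ and must be integrated over a window of width $y$; your stated $y(\log n+1)n^{-3/2}$ is missing one factor of $y$. You compensate for this in the $\delta\geq\tfrac12$ case by (implicitly) replacing $g(n)\star(S_n^+)^\alpha$ with the \emph{product} $g(n)\cdot(S_n^+)^\alpha$ in the final multiplication, which is why you land on $\log n$ and on $\Theta(1)$ there despite the wrong intermediate estimate. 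In the $\delta<\tfrac12$ lower bound the two errors no longer cancel: with $a_n=n^\delta$ the correct computation is $(g(n)\wedge a_n^\alpha)\cdot e^{-n^{-\delta}a_n}\cdot a_n^{2}n^{-3/2} = n^{3/2-2\delta}\cdot e^{-1}\cdot n^{2\delta-3/2}\asymp 1$, whereas your product gives $n^{3/2-3\delta}$, and you explicitly defer the resolution (``I would set up the bookkeeping so that the exponents cancel exactly''). Since $\delta<\tfrac12$ is precisely the regime producing the new exponent $2\delta-\tfrac32$ in Theorem \ref{thm:fractional_moments}(iii), this is not a cosmetic slip: the proof of the lower bound in the main new case is not actually carried out. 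With the corrected local estimate and with $\star$ treated as $\wedge$/$\vee$ rather than a product, your dyadic scheme does close in both regimes, but as written the proposal does not establish the proposition.
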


We will prove this proposition in two steps. First, in Lemma~\ref{lemma:KMT_coupling},
we will show that we can replace the functional of a random walk
by an equivalent functional of a Brownian motion. Here, we will use
the coupling of a random walk with a Brownian motion due to
Koml\'os-Major-Tusn\'ady.
Finally, we can
evaluate that expression which is only a functional of the end point
of the Brownian motion and its maximum using the explicit formula of the
their joint density,
see Lemma~\ref{lemma:functional_Brownian_motion}.

In what follows we let $(B_t)_{t \geq 0}$ denote a standard Brownian motion started at the origin and $\E_0$ denote expectation with respect to this Brownian motion.

\begin{lemma}\label{lemma:KMT_coupling} Let $\sigma^2 = \Var(S_1)$.
Under the assumptions of Proposition~\ref{prop:random_walk_expression_bounded},
there exists $\kappa_0 > 0$ such that for any $\kappa \geq \kappa_0$
there exist constants $c,C > 0$ (depending only on $\delta$ and the distribution of
$S_1$) such that
\[ \bal
 c\,\E_0 [ & (g(n) \star  (B_n^+)^\alpha) e^{- \gamma_n \sigma B_n } \1_{\{ \underline B_n \geq - \underline \kappa \log n, B_n \geq 0\}} ] + O(n^{-(1 \wedge 2 \delta) + o(1)}) \\
& \leq    \E_\Q[ (g(n) \star (S_n^+)^\alpha ) e^{-\gamma_n S_n}  \1_{\underline S_n \geq -\kappa \log n, S_n \geq 0} ] \\
& \quad \leq C\,\E_0 [ (g(n) \star  (B_n^+)^\alpha) e^{- \gamma_n \sigma B_n } \1_{\{ \underline B_n \geq - \overline \kappa \log n, B_n \geq 0\}} ] + O(n^{-(1 \wedge 2 \delta) + o(1)})
  ,\eal\]
where $\overline \kappa, \underline \kappa \geq 0$ are some suitable constants
(depending on $\kappa$ and the distribution of $S_1$) and
$\underline B_n = \min_{0 \leq t \leq n} B_t$.
\end{lemma}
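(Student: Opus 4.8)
The plan is to invoke the Koml\'os--Major--Tusn\'ady strong approximation theorem: since the increments of $(S_n)$ have all exponential moments, we may couple $(S_n)_{n\geq0}$ and a Brownian motion $(B_t)_{t\geq0}$ (with $\Var(S_1)=\sigma^2$, so that $B$ is run at rate $\sigma^2$, i.e.\ we compare $S_n$ with $\sigma B_n$ — I write it in the statement as $\sigma B_n$ with $B$ standard) on a common probability space so that
\[ \p\Big( \max_{k \leq n} |S_k - \sigma B_k| > C' \log n \Big) \leq C'' n^{-\theta} \]
for any prescribed $\theta$, provided $C'$ is chosen large enough depending on $\theta$ and the increment distribution. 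The idea is that, on the good event $G_n = \{\max_{k\leq n}|S_k-\sigma B_k| \leq C'\log n\}$, the event $\{\underline S_n \geq -\kappa\log n,\ S_n \geq 0\}$ is sandwiched between two Brownian events of the same type, namely $\{\underline B_n \geq -\underline\kappa\log n,\ B_n \geq -(C'/\sigma)\log n\}$ and $\{\underline B_n \geq -\overline\kappa\log n,\ B_n \geq 0\}$ after adjusting the constants. Similarly the integrand $(g(n)\star(S_n^+)^\alpha)e^{-\gamma_n S_n}$ is, on $G_n$, comparable up to multiplicative constants to $(g(n)\star(\sigma B_n^+)^\alpha)e^{-\gamma_n \sigma B_n}$: the exponential factor changes by at most $e^{|\gamma_n| C'\log n} = e^{n^{-\delta+o(1)}} = 1+o(1)$ since $\delta>0$, and the polynomial factor $(S_n^+)^\alpha$ differs from $(\sigma B_n^+)^\alpha$ by a lower-order correction when $S_n$ (equivalently $B_n$) is of order $\sqrt n \gg \log n$, which is where the mass lives — near $\{B_n \approx 0\}$ both are bounded by $(C'\log n)^\alpha$ anyway, and that region contributes only $O((\log n)^{\alpha+1} n^{-3/2})$.

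The main steps, in order, are: (1) state the KMT coupling with the exponential-tail error and fix $\theta$ large (say $\theta > 3/2$); (2) on the complement $G_n^c$, bound the contribution of both the random-walk and the Brownian expectation crudely — using Cauchy--Schwarz as in the proof of Proposition~\ref{lemma:upper_reduction_to_random_walk}, the integrand has an $L^2(\Q)$ (resp.\ $L^2(\p_0)$) norm that is at most polynomial in $n$ (the factor $e^{-\gamma_n S_n}$ has bounded second moment when $\gamma_n \leq 0$, and when $\gamma_n = n^{-\delta}>0$ it is $\leq 1$ on $\{S_n\geq0\}$; the factor $(g(n)\star(S_n^+)^\alpha)$ is at most $g(n)\leq n^{3/2}$), so together with $\p(G_n^c)\leq C n^{-\theta}$ this piece is $O(n^{3/2-\theta/2})$, absorbed into the $O(n^{-(1\wedge 2\delta)+o(1)})$ error after choosing $\theta$ large; (3) on $G_n$, perform the two-sided sandwiching of both the indicator event and the integrand described above, producing the upper and lower Brownian bounds with constants $\overline\kappa = \kappa + C'/\sigma + 1$ and $\underline\kappa = \kappa - C'/\sigma$ (shrinking $\kappa_0$ if necessary so $\underline\kappa \geq 0$, or simply replacing $-\underline\kappa\log n$ by $0$ and noting the Brownian event is still non-degenerate); (4) handle the near-zero region $\{0 \leq B_n \leq (C'/\sigma)\log n\}$ separately — on it the integrand is $O((\log n)^\alpha)$ and the probability of $\{\underline B_n \geq -\overline\kappa\log n,\ 0\leq B_n \leq C\log n\}$ is $O((\log n)^{\alpha+2} n^{-3/2})$ by the reflection principle / local CLT for Brownian motion conditioned to stay positive, again absorbed into the error term.

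I expect the main obstacle to be step (3), specifically matching the \emph{polynomial} prefactor $(S_n^+)^\alpha$ versus $(\sigma B_n)^\alpha$ cleanly with only \emph{multiplicative} constants $c,C$ rather than additive errors: where $S_n$ and $\sigma B_n$ are both $\Theta(\sqrt n)$ we have $(S_n^+)^\alpha / (\sigma B_n^+)^\alpha = (1 + O(\log n/\sqrt n))^\alpha = 1+o(1)$, which is fine, but one must be careful that the $\star$ ($\wedge$ or $\vee$) with $g(n) = n^{\alpha/2}$ does not spoil monotonicity — for $\wedge$ a smaller $S_n$ gives a smaller integrand and for $\vee$ a larger one, so the direction of the inequality in the sandwich must be tracked consistently with whether $S_k \leq \sigma B_k + C'\log n$ or $\geq \sigma B_k - C'\log n$ is being used, and this is why the upper bound and lower bound get \emph{different} constants $\overline\kappa, \underline\kappa$. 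Everything else is routine: KMT is quoted as a black box, the tail bounds are Cauchy--Schwarz plus exponential-moment estimates, and the small-$B_n$ contribution is a standard conditioned-random-walk (or Brownian) estimate of the type already used via \cite[Lemma A.1]{AidekonShi_weak} and \cite{Bol76}.
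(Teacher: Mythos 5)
Your proposal follows essentially the same route as the paper's proof: quote the KMT strong approximation with a polynomially small exceptional probability, kill the bad event by Cauchy--Schwarz using the exponential moments of $S_1$ (choosing the KMT exponent large enough to beat the polynomial size $g(n)\leq n^{3/2}$ of the integrand), and on the good event sandwich both the indicator and the integrand between Brownian analogues with shifted constants $\underline\kappa,\overline\kappa$, treating the near-zero window $|B_n|\lesssim \log n$ separately since there the additive $O(\log n)$ coupling error is not a multiplicative $1+o(1)$ perturbation of $(S_n^+)^\alpha$; that window contributes $O(g(n)\,n^{-3/2+o(1)})=O(n^{-(1\wedge 2\delta)+o(1)})$, exactly the stated error term. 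The one detail to fix is the remark about "shrinking $\kappa_0$": to keep $\underline\kappa\geq 0$ one must take $\kappa_0$ \emph{large} enough (the paper sets $\kappa_0=c_\rho/\sigma$), though your fallback of replacing $-\underline\kappa\log n$ by $0$ also works for the lower bound.
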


\begin{proof}
Let $(S_t)_{t\geq 0}$ denote the piecewise constant approximation of $(S_n)_{n \geq 0}$,
defined by $S_t = S_{\lfloor t \rfloor }$. Since the increments of the random walk have exponential moments,
the Koml\'os-Major-Tusn\'ady theory~\cite{KMT2}
provides a coupling of $(S_t)_{t \in [0,n]}$ and a standard Brownian motion
$(B_t)_{t \in [0,n]}$ such that for any $\rho > 0$, there
exists a constant $c_\rho > 0$ (depending on $\rho$ and the distribution
of $S_1$) satisfying
\[ \p \Big\{ \sup_{s \in [0,n]} | S_s - \sigma B_s| \geq c_\rho \log n \Big\} \leq n^{-\rho} .\]
This is an easy extension of the original result, see e.g.\ the proof
of Thm.~2.6. in~\cite{AD11}.
Denote by $E = \{ \sup_{s \in [0,n]} | S_s - \sigma B_s| \leq c_\rho \log n\}$.
It will be convenient to choose $\rho = 4 \alpha$ and
especially for the lower bound set $\kappa_0 = \frac{c_\rho}{\sigma}$.
From now on we will assume that  $\kappa \geq \kappa_0$.

{\bf Step 1.} \emph{Upper bound on the event $E$}.
On the event $E$ the coupling works well and
we can replace $(S_t)_{t \in [0,n]}$ by $(B_t)_{t \in [0,n]}$ in the following sense
\[ \bal (g(n) & \star (S_n^+)^\alpha)  e^{-\gamma_n S_n} \1_{\{ \underline S_n \geq -\kappa \log n,
 S_n \geq 0 \}} \\
& \leq
(g(n) \star (\sigma B_n^+ + c_\rho \log n)^\alpha) e^{-\gamma_n \sigma B_n + c_\rho |\gamma_n|\log n} \1_{\{ \underline B_n \geq -\overline \kappa \log n,
\sigma B_n \geq - c_\rho \log n \}} ,
\eal
\]
where $\overline \kappa := \frac{\kappa + c_\rho}{\sigma}$.
Now, note that $|\gamma_n| \log n \ra 0$ as $n \ra \infty$, so that we can bound
$e^{c_\rho|\gamma_n|\log n}$ by a constant and further we can bound
the sum $(B_n^+ + c_\rho \log n)^\alpha \leq 2^\alpha( B_n^+ \vee c_\rho \log n)$. Hence, we find that
\be{eqn:upper_bound_on_RW} \bal\E  \big[&(g(n)  \star (S_n^+)^\alpha)  e^{-\gamma_n S_n} \1_{\{ \underline S_n \geq -\kappa \log n,
 S_n \geq 0 \}} \1_E \big]\\
& \leq C 2^\alpha \E \big[(g(n) \star  ((\sigma B_n^+)^\alpha \vee (c_\rho \log n)^\alpha)) e^{-\gamma_n \sigma B_n} \1_{\{ \underline B_n \geq -\overline \kappa \log n,
 \sigma B_n \geq - c_\rho \log n \}} \big]
\eal \ee
Now if $\star = \vee$, then $g(n) \star  ((\sigma B_n^+)^\alpha \vee (c_\rho \log n)^\alpha)
= g(n) \star  (\sigma B_n^+)^\alpha$ and on the other hand if $\star = \wedge$, we have to estimate
\[\bal \E [(g(n) & \wedge  ((\sigma B_n^+)^\alpha \vee (c_\rho \log n)^\alpha)) e^{-\gamma_n B_n} \1_{\{ \underline B_n \geq -\overline \kappa \log n ,\,
\sigma  B_n \geq - c_\rho \log n \}} ] \\
& \leq \E [(g(n)  \wedge  (\sigma B_n^+)^\alpha ) e^{-\gamma_n\sigma B_n} \1_{\{ \underline B_n \geq -\overline \kappa \log n ,\,
\sigma  B_n \geq  c_\rho \log n \}} ] \\
& \quad + \E [  (c_\rho \log n)^\alpha  e^{-\gamma_n\sigma B_n} \1_{\{ \underline B_n \geq - \overline \kappa \log n, - c_\rho \log n\leq \sigma B_n \leq c_\rho \log n  \}} ]
\eal \]
We now claim that the second summand in the previous display is of order $o(n^{-1})$.
Indeed,
\[\bal \E [  (c_\rho \log n)^\alpha & e^{-\gamma_n \sigma B_n} \1_{\{ \underline B_n \geq - \overline \kappa \log n, - c_\rho \log n\leq \sigma B_n \leq c_\rho \log n  \}} ] \\
& \leq
(c_\rho \log n)^\alpha  e^{|\gamma_n| c_\rho \log n} \p \{ \underline B_n \geq -\overline \kappa \log n, - c_\rho \log n\leq \sigma B_n \leq c_\rho \log n  \}
\\ & \leq C n^{-\frac{3}{2} + o(1)} 
\eal \]
where the last bound follows from a standard Brownian calculation using
for example the explicit density of maximum and final position (see
e.g.\ the proof of Lemma~\ref{lemma:functional_Brownian_motion}).

Hence, we can summarize the two possible choices for $\star$ and conclude from~(\ref{eqn:upper_bound_on_RW})
that
\[ \bal \E  \big[&(g(n)  \star (S_n^+)^\alpha)  e^{-\gamma_n S_n} \1_{\{ \underline S_n \geq -\kappa \log n,
 S_n \geq 0 \}} \1_E \big] \\
& \leq C \, \E [(g(n)  \star  (\sigma B_n^+)^\alpha ) e^{-\gamma_n \sigma B_n} \1_{\{ \underline B_n \geq -\overline \kappa \log n, \,  \sigma B_n \geq - c_\rho \log n \}} ] + O(n^{-\frac{3}{2} + o(1)}).
\eal
\]
This is almost of the right form for the main term in the statement of the lemma
(where the $\sigma$ in front of $B_n^+$ can be absorbed into the constants).
Thus it remains to show that we can replace the indicator $\sigma B_n \geq - c_\rho \log n$ by that of $B_n \geq 0$ to obtain the correct upper bound on the event $E$.

Here, it suffices to show that the following expression is of order $O(n^{-2(\frac{1}{2} \wedge \delta) + o(1) })$,
\[ \bal \E [(g(n)  & \star  (\sigma B_n^+)^\alpha ) e^{-\gamma_n \sigma B_n} \1_{\{ \underline B_n \geq - \overline \log n, \,  - c_\rho \log n \leq \sigma B_n \leq 0  \}} ] \\
& \leq \E [ g(n)   e^{|\gamma_n| c_\rho \log n } \1_{\{ \underline B_n \geq - \overline \kappa \log n, \,  - c_\rho \log n \leq \sigma B_n \leq 0  \}} ] \\
& \leq C g(n) \p \{ \underline B_n \geq -\overline \kappa \log n, \,  - c_\rho \log n \leq \sigma B_n \leq 0  \}
\\
& \leq C g(n) n^{-\frac{3}{2} + o(1)} ,
\eal
\]
where the last step follows from a standard Brownian calculation.
However, if $|\delta| \geq \frac{1}{2}$, then $g(n) = n^\frac{1}{2}$, so that the latter expression
is of order $n^{- 1 + o(1)}$, whereas if $\delta \in (0,\frac{1}{2})$, then $g(n) = n^{\frac{3}{2} - 2\delta}$,
so that the expression is of order $n^{- 2 \delta + o(1)}$ as claimed. This last
step completes the proof of the upper bound on the event $E$.

{\bf Step 2.} \emph{Upper bound on the event $E^c$.}
In this scenario, we can estimate using Cauchy-Schwarz
\[ \bal\E  \big[(g(n)  \star (S_n^+)^\alpha) & e^{-\gamma_n S_n} \1_{\{ \underline S_n \geq -\kappa \log n,
 S_n \geq 0 \}} \1_{E^c} \big]\\
& \leq \E  \big[(g(n)  \vee (S_n^+)^\alpha)^2
  e^{2n^{-\frac{1}{2}} S_n}]^{\frac{1}{2}}
\p(E^c)^{\frac{1}{2}} , \\
\eal 	
\]
where we also used in the last step that if $\gamma_n = - n^\delta$, we only consider the case $\delta \geq \frac{1}{2}$
so that $- \gamma_n S_n \leq n^{-\frac{1}{2}} S_n$, while if $\gamma_n \geq 0$ this bound holds trivially since $S_n \geq 0$.  Using that $g(n) \leq n^{\frac{\alpha}{2}}$, we have that this
expression can be bounded from above by
\be{eqn:estimate_on_E_c} n^{\alpha}\ \E [ \big[(1  \vee (n^{-\frac{1}{2}}S_n^+)^\alpha)^2
  e^{2n^{-\frac{1}{2}} S_n}]^{\frac{1}{2}}
\p(E^c)^{\frac{1}{2}}  \ee
Now, we combine the weak convergence
of $n^{-\frac{1}{2}}S_n$ to $\sigma B_1$ with a standard uniform integrability bound
that follows easily from $S_1$ having exponential moments
to
deduce that
\[ \E [ \big[(1  \vee (n^{-\frac{1}{2}}S_n^+)^\alpha)^2
  e^{2n^{-\frac{1}{2}} S_n}] \ra \E \Big[  ( 1 \vee (\sigma B_1^+)^\alpha)^2
e^{2 \sigma B_1} \Big] . \]
Hence, if we combine this observation with the estimate
$\p(E^c) \leq n^{-\rho}$ we have the following bound
on~(\ref{eqn:estimate_on_E_c}):
\[ n^\alpha \,  \E [ \big[(1  \vee (n^{-\frac{1}{2}}S_n^+)^\alpha)^2
  e^{2n^{-\frac{1}{2}} S_n}]^{\frac{1}{2}}
\p(E^c)^{\frac{1}{2}}
\leq C n^{\alpha- \frac{1}{2} \rho}.  \]
Since we chose $\rho = 4\alpha$ the latter expression is of order
$n^{-\alpha} \leq n^{-1}$ (since $\alpha \geq 1$) as claimed.

A \emph{lower bound} simply follows by interchanging the roles of
random walk and Brownian motion and replacing standard
Brownian calculations by standard random walk calculations, see e.g.~\cite[Lemma A.1]{AidekonShi_weak}. Moreover, we then need to replace the role of $\overline \kappa$
by $\kappa$ and that of $\kappa$ by a suitable $\underline \kappa$. In particular,
we will choose $\underline \kappa := \sigma \kappa - c_\rho$, which is non-negative
if $\kappa \geq \kappa_0 := c_\rho / \sigma$.
\end{proof}

\begin{lemma}\label{lemma:functional_Brownian_motion} Under the assumptions of
Proposition~\ref{prop:random_walk_expression_bounded}, for any $\kappa \geq 0$,
and all $n$ sufficiently large,
\[ c\leq \E_0 \Big[ (g(n) \star  B_n^\alpha) e^{- \gamma_n \sigma B_n } \1_{\{ \underline B_n\geq - \kappa \log n, B_n \geq 0\}}\Big] \leq C \log n . \]
 \end{lemma}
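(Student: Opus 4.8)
The plan is to turn the expectation into an explicit two-dimensional Gaussian integral via the reflection principle, integrate out the running minimum, and then read off the two-sided bound by rescaling $B_n$ to its natural scale. Recall that for $a\le 0\le x$ the pair $(\underline B_n,B_n)$ has joint density $p_n(a,x)=\frac{2(x-2a)}{\sqrt{2\pi n^3}}\,e^{-(x-2a)^2/(2n)}$, so that with $\star\in\{\wedge,\vee\}$,
\[
I_n:=\E_0\Big[(g(n)\star B_n^\alpha)\,e^{-\gamma_n\sigma B_n}\1_{\{\underline B_n\ge-\kappa\log n,\,B_n\ge0\}}\Big]
=\int_{-\kappa\log n}^0\!\!\int_0^\infty(g(n)\star x^\alpha)\,e^{-\gamma_n\sigma x}\,p_n(a,x)\,dx\,da .
\]
First I would integrate out $a$. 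Since $|a|\le\kappa\log n=o(\sqrt n)$, on the range of integration $e^{-(x-2a)^2/(2n)}=e^{-x^2/(2n)}(1+o(1))$ uniformly in $x$ up to polynomial scale (the larger $x$ contributing a negligible, super-polynomially small tail), while $\int_{-\kappa\log n}^0(x-2a)\,da=\kappa\log n\,(x+\kappa\log n)$. This yields
\[
I_n\;\asymp\;\frac{\log n}{n^{3/2}}\int_0^\infty(g(n)\star x^\alpha)\,e^{-\gamma_n\sigma x}\,(x+\kappa\log n)\,e^{-x^2/(2n)}\,dx ,
\]
which already produces the single factor $\log n$ in the statement: it is exactly the length of the window in which $\underline B_n$ is forced to lie, and it is the reason a direct Brownian rescaling of the whole integral would degenerate, so one must integrate $a$ out before rescaling $x$.

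Next I would evaluate the remaining one-dimensional integral by substituting $x=h(n)v$, where $h(n)$ is the effective scale of $B_n$: $h(n)=\sqrt n$ when $\delta\ge\tfrac12$ (the exponential tilt is then merely an $O(1)$ correction, since $|\gamma_n h(n)|=n^{1/2-\delta}\le1$) and $h(n)=n^\delta$ when $0<\delta<\tfrac12$ (the tilt then dominates the Gaussian, as $n^\delta\ll n^{1-\delta}$ and $h(n)^2/n=n^{2\delta-1}\to0$). In both cases the definition of $\alpha$ gives $h(n)^\alpha=g(n)$ and, crucially, $g(n)\,h(n)^2=n^{3/2}$. The substitution turns the integral into
\[
g(n)\,h(n)^2\int_0^\infty(1\star v^\alpha)\,(v+o(1))\,e^{-\gamma_n h(n)\sigma v}\,e^{-h(n)^2v^2/(2n)}\,dv ,
\]
and here $|\gamma_n h(n)|\le1$ throughout, so the $v$-integrand is dominated, uniformly in $n$, by $(1+v^\alpha)(v+1)\,e^{\sigma v-v^2/2}$ when $\delta\ge\tfrac12$ and by $(1+v^\alpha)(v+1)\,e^{-\sigma v}$ when $\delta<\tfrac12$ (both integrable), and is bounded below by a positive constant on $v\in[1,2]$. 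Hence the $v$-integral is of constant order and $I_n\asymp\frac{\log n}{n^{3/2}}\,g(n)\,h(n)^2=\log n$, which gives both $I_n\le C\log n$ and $I_n\ge c$ for $n$ large.

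To make these bounds rigorous I would, for the upper bound, simply use $g(n)\star x^\alpha\le g(n)+x^\alpha$ and split accordingly; for the lower bound restrict the $x$-integral to $[h(n),2h(n)]$ — where $g(n)\star x^\alpha\ge g(n)$, $e^{-\gamma_n\sigma x}\ge e^{-2\sigma}$ and $e^{-x^2/(2n)}$ is bounded below — while keeping the full $a$-range. The only place the various cases ($\delta\ge\tfrac12$ or $\delta<\tfrac12$, the sign of $\gamma_n$, $\star\in\{\wedge,\vee\}$) genuinely enter is the choice of the scale $h(n)$ and the resulting identity $g(n)\,h(n)^2=n^{3/2}$, which is forced by the definition of $\alpha$ and is exactly what makes the answer of order $\log n$ in every case. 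The main — and rather mild — obstacle is therefore getting this rescaling right, in particular recognising that in the near-critical regime $\delta<\tfrac12$ the endpoint $B_n$ lives on scale $n^\delta$ rather than $\sqrt n$; the remaining $\wedge$/$\vee$ and sign bookkeeping is routine, especially since only the crude two-sided bound $c\le I_n\le C\log n$ is needed rather than a precise asymptotic.
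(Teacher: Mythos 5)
Your proof is correct and follows essentially the same route as the paper's: write the expectation as an explicit double integral using the joint density of the Brownian motion and its running minimum (the paper reflects and works with the running maximum instead), integrate out the barrier variable to produce the single factor $\log n$, and then rescale the endpoint by its natural scale so that $g(n)h(n)^2 = n^{3/2}$ turns the remaining integral into an $n$-independent constant. The only difference is cosmetic: you unify the cases through the single scale $h(n)\in\{\sqrt n,\,n^{\delta}\}$, whereas the paper carries out the same rescaling separately for $\delta\ge \tfrac12$ and $\delta<\tfrac12$ and for the upper and lower bounds.
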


\begin{proof}
We use the explicit formula for the joint density of $B_t$ and its running maximum
$\overline B_t = \sup_{0 \leq s \leq t} B_s$, see e.g.\!~\cite[Thm. 3.7.3]{Shreve04},
which states that
$(B_t, \overline B_t)$ has for fixed $t > 0$ a joint density with respect to
$2$-dimensional Lebesgue measure given by
\[ f(x,m) = \frac{2(2m - x)}{t\sqrt{2\pi t}} e^{-\frac{(2m-x)^2}{2t}}  ,\quad \mbox{for }
x \leq m, m > 0. \]
Thus, we can explicitly calculate the functional of the Brownian motion
and its minimum
by first reflecting the Brownian motion as
\be{all_in_BM} \bal \E_0 \Big[ (g(t) & \star  B_t^\alpha) e^{- \gamma_t \sigma B_t } \1_{\{ \underline B_t \geq - \kappa \log t, B_t \geq 0\}}\Big] \\
& = \E_0 [ (g(t) \star (-B_t)^\alpha)  e^{ \gamma_t \sigma B_t } \1_{\{ \overline B_t \leq  \kappa \log t, B_t \leq 0\}} ] \\
& = \frac{2}{\sqrt{2\pi}} t^{-\frac{3}{2}} \int_0^{\kappa\log t}   \int_{-\infty}^0 (g(t) \star (-x)^\alpha) e^{\gamma_t \sigma x} (2m- x)
e^{-\frac{(2m-x)^2}{2t}} dx \,dm\, .\eal\ee

We will show lower and upper bound separately and also distinguish
the case of a large or a small perturbation.

{\it Upper bound in the case $|\gamma_t| \leq t^\frac{1}{2}$}.
In this case, $g(t) = t^{\frac{1}{2}}$ and $\alpha = 1$,
so we can bound
the expression in~(\ref{all_in_BM}) by
\[ \bal C t^{-\frac{3}{2}}(\kappa\log t) \int_0^\infty &(t^\frac{1}{2} \vee x) e^{t^\frac{1}{2} \sigma x} (2\kappa\log t + x) e^{-\frac{x^2}{2t}} d x \\
& \leq C (\log t) \big(1+ \tfrac{2\kappa\log t}{t^\frac{1}{2}}\big)\int_0^\infty (1\vee x)^2 e^{\sigma x - \frac{x^2}{2}} dx \, ,\eal \]
which is bounded by $C \log t$ for some (different to above) constant $C$.

{\it Upper bound in the case $\gamma_t = t^{-\delta}$ for $\delta \in (0,\frac{1}{2})$}.
Note that here we have defined $g(t) = t^{\frac{3}{2} - 2\delta}$
and that $\alpha$ is chosen so that $t^{\delta\alpha} = g(t)$,
therefore we can bound~(\ref{all_in_BM}) by
\[ \bal C t^{-\frac{3}{2}}(\kappa\log t) & \int_0^\infty (g(t) \vee x^\alpha) (2\kappa\log t + x) e^{-t^{-\delta} x} d x \\
& \leq C t^{-\frac{3}{2}+ 2\delta} g(t)  (\log t)\big(1+ \tfrac{2\kappa\log t}{t^\delta}\big) \int_0^\infty (1 \vee x^\alpha )^2  e^{-x} d x
\,,\eal \]
so that by our choice of $g(t)$, the latter is bounded by $C \log t$.

{\it Lower bound in the case $|\gamma_t|\leq t^{\frac{1}{2}}$}.
Here, we have chosen $g(t) = t^\frac{1}{2}$ and $\alpha = 1$. We can lower
bound the expression in~(\ref{all_in_BM}) by
\[\bal c t^{-\frac{3}{2}} \int_0^\infty (g(t) \wedge x) x & e^{-\sigma t^{-\frac{1}{2}}x}
e^{- \frac{1}{2} t^{-\frac{1}{2}}(2\kappa \log t
 + x)^2} dx \\
& \geq c \int_0^\infty (1 \wedge x) x e^{-\sigma x} e^{- 2\kappa t^{-\frac{1}{2}} \log t  - x^2} dx ,
\eal \]
where we used the inequality $(x+y)^2 \leq 2 (x^2 + y^2)$. This
is expression is bounded from below by an absolute constant.

{\it Lower bound in the case $\gamma_t = t^{-\delta}, \delta \in (0,\frac{1}{2})$}.
Here, we have defined $g(t) = t^{\frac{3}{2} - 2\delta}$ and $\alpha$ is chosen
so that $t^{\delta\alpha} = g(t)$. Then, we
can similarly to above find a lower bound on the integral in~(\ref{all_in_BM})
\[\bal c t^{-\frac{3}{2}} \int_0^\infty (g(t) \wedge x^\alpha) x & e^{-\sigma t^{-\delta}x}
e^{- \frac{1}{2} t^{-\frac{1}{2}}(2\kappa \log t
 + x)^2} dx  \\
& \geq
c \int_0^\infty (1 \wedge x^\alpha) x  e^{-\sigma x}
e^{-  t^{\delta -\frac{1}{2}} (2\kappa \log t)^2
 - t^{\delta - \frac{1}{2}} x} dx ,
\eal \]
which, by dominated convergence, is bounded below by an absolute constant.
\end{proof}

\begin{proof}[Proof of Proposition~\ref{prop:random_walk_expression_bounded}]
 The proof now follows by combining the previous two Lemmas~\ref{lemma:KMT_coupling}
and~\ref{lemma:functional_Brownian_motion}.
\end{proof}

\appendix
\section{Fractional Moment Bounds to Asymptotics \label{sec:moments2asymp}}
	
In this appendix we show how the fractional moment bounds obtained in Theorem~\ref{thm:fractional_moments} imply the asymptotics in the main Theorem~\ref{thm:main}. The arguments are fairly standard and in a variation are also used in~\cite{HS09}.

\begin{lemma}[Upper bounds] Write $W_n^{\cdot, \delta} = W_n^{\pm, \delta}$. Let $(a_n)_{n \in \N}$ be a sequence of real numbers such that $|a_n| \to \infty$ as $n \to \infty$, and suppose that for every $\gamma \in (0,1)$ we have
\begin{align*}
\E[(W_n^{\cdot, \delta})^{\gamma}] = e^{\gamma a_n(1+ o(1))}.
\end{align*}
Then $W_n^{\cdot, \delta} \leq e^{a_n (1 + o(1))}$ in probability, as $n \to \infty$.
Moreover, if for any $\eps > 0$, $\sum_{n \geq 1} e^{- \eps |a_n|} < \infty$, then
$W_n^{\cdot, \delta} \leq e^{a_n (1 + o(1))}$ almost surely.
\end{lemma}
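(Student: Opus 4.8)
The plan is to apply Markov's inequality to a single fixed fractional power of $W_n^{\cdot,\delta}$ and then, for the almost sure statement, upgrade the resulting tail estimate through the Borel--Cantelli lemma. Everything is phrased in terms of $|a_n|$ so that the bookkeeping is the same whether $a_n\to+\infty$ or $a_n\to-\infty$; recall that ``$W_n^{\cdot,\delta}\leq e^{a_n(1+o(1))}$'' means $\log W_n^{\cdot,\delta}-a_n\leq \eps_n|a_n|$ for some $\eps_n\downarrow 0$ (deterministic for the in-probability statement, $\omega$-dependent for the almost sure one).

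First I would fix one value $\gamma\in(0,1)$ (only one is needed, not the whole family) and, for an arbitrary $\eps>0$, estimate
\begin{align*}
\p\big(\log W_n^{\cdot,\delta}>a_n+\eps|a_n|\big)
&=\p\big((W_n^{\cdot,\delta})^{\gamma}>e^{\gamma(a_n+\eps|a_n|)}\big)
\leq \frac{\E[(W_n^{\cdot,\delta})^{\gamma}]}{e^{\gamma(a_n+\eps|a_n|)}}\\
&=\exp\big\{\gamma a_n o(1)-\gamma\eps|a_n|\big\}
=\exp\big\{\gamma|a_n|(o(1)-\eps)\big\},
\end{align*}
where the first line uses positivity of $W_n^{\cdot,\delta}$ and the hypothesis $\E[(W_n^{\cdot,\delta})^{\gamma}]=e^{\gamma a_n(1+o(1))}$, and the last step uses $a_no(1)=o(|a_n|)$. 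Since $|a_n|\to\infty$ and $o(1)-\eps\to-\eps<0$, this probability tends to $0$. As $\eps>0$ was arbitrary, $\log W_n^{\cdot,\delta}\leq a_n+o(|a_n|)$ in probability, which is the first assertion.

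For the almost sure statement I would keep $\gamma$ and $\eps$ fixed: for all $n$ large the $o(1)$ above is at most $\eps/2$, so $\p(\log W_n^{\cdot,\delta}>a_n+\eps|a_n|)\leq e^{-\frac{\gamma\eps}{2}|a_n|}$ eventually. The assumption that $\sum_{n\geq 1}e^{-\eps'|a_n|}<\infty$ for every $\eps'>0$, applied with $\eps'=\gamma\eps/2$, makes the right-hand side summable, so by Borel--Cantelli there is a full-probability event $\Omega_\eps$ on which $\log W_n^{\cdot,\delta}\leq a_n+\eps|a_n|$ for all sufficiently large $n$. Intersecting the events $\Omega_{1/k}$, $k\in\N$, and running the standard diagonal extraction produces, on a full-probability event, a deterministic sequence $\eps_n\downarrow 0$ with $\log W_n^{\cdot,\delta}\leq a_n+\eps_n|a_n|$ eventually, i.e.\ $W_n^{\cdot,\delta}\leq e^{a_n(1+o(1))}$ almost surely.

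There is no real obstacle here: the argument is the classical ``fractional moment plus Markov plus Borel--Cantelli'' scheme, identical to the one used in \cite{HS09}. The only points requiring a little care are the sign of the inequalities when $a_n$ changes sign (handled by working with $|a_n|$ throughout) and the passage from the countable family of $\eps=1/k$ bounds to a single $o(1)$ statement (the routine diagonal argument). One could reasonably compress the whole proof to a sentence invoking these standard steps.
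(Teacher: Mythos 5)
Your proof is correct and follows essentially the same route as the paper's: apply Markov/Chebyshev to a fractional power of $W_n^{\cdot,\delta}$, use the moment hypothesis to get a tail bound of the form $e^{-c|a_n|}$, and invoke Borel--Cantelli (with the summability hypothesis) for the almost sure version. Your observations that a single $\gamma$ suffices and that the $\eps=1/k$ bounds are combined by a routine diagonal argument are accurate refinements of what the paper leaves implicit.
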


\begin{rem} This lemma shows how to deduce the upper bounds in Theorem~\ref{thm:main}
 from Theorem~\ref{thm:fractional_moments}.
We take
\[ a_n = \begin{cases}  - \frac{1}{2}\log n & \mbox{ for $\delta \geq 1/2$,} \\
                     \frac{1}{2} n^{1-2\delta} \beta_c^2 \la''(\beta_c) & \mbox{ for $W_n^{\cdot, \delta} = W_n^{-,\delta}, 0 < \delta < 1/2$,} \\
                     (2 \delta - \frac{3}{2}) \log n & \mbox{ for $W_n^{\cdot, \delta} = W_n^{+,\delta}, 0 < \delta < 1/2$.}
         \end{cases}
\]
In particular, the lemma shows that in the case of large, negative perturbations
the upper bound holds almost surely.
\end{rem}

\begin{proof}
Fix $\epsilon > 0$ and let $\gamma \in (0,1)$. Then by assumption $\E[(W_n^{\cdot, \delta})^{\gamma}] \leq e^{\gamma a_n + \epsilon \gamma |a_n| /2}$ for all $n$ sufficiently large. Then by Chebyshev's inequality
\begin{align*}
\p(W_n^{\cdot, \delta} > e^{a_n + \epsilon |a_n|}) &\leq e^{-\gamma a_n - \epsilon \gamma |a_n|} \E[(W_n^{\cdot, \delta})^{\gamma}] \\
& \leq e^{-\epsilon \gamma |a_n|/2}
\end{align*}
Thus by the assumption on the $|a_n|$ we have $W_n^{\cdot, \delta} \leq e^{a_n(1 + o(1))}$ in probability. The second part of the statement follows from Borel-Cantelli.
\end{proof}

\begin{lemma}[Lower bounds] Let $W_n^{\cdot, \delta} = W_n^{\pm, \delta}$. Let $(a_n)_{n \in \N}$ be a sequence with $(\log n)^\frac{1}{2} \ll |a_n| \ll n$. Assume that for all $\gamma \in (0,1)$ we have
 \[ \E[(W_n^{\cdot, \delta})^\gamma ]  = e^{\gamma a_n(1 + o(1))} .\]
Then almost surely
\[ W_n^{\cdot, \delta}  \geq e^{a_n(1+o(1))} . \]
\end{lemma}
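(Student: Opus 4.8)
The plan is to run the classical two-step passage from fractional-moment bounds to almost-sure bounds used in~\cite{HS09}: a Paley--Zygmund argument to obtain a lower bound in probability, followed by a self-similar decomposition of the tree to boost it to an almost-sure statement. Throughout write $Y_n = W_n^{\cdot,\delta}$.

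\textbf{Step 1: a lower bound in probability.} First I would show that for every $\eps > 0$,
\[
\p\big(Y_n \geq e^{a_n - \eps|a_n|}\big) \;\geq\; e^{-o(|a_n|)} \qquad (n \to \infty).
\]
This follows from H\"older's inequality applied to $\E[Y_n^{\gamma_1}\1_{\{Y_n \geq t\}}]$ for two exponents $0 < \gamma_1 < \gamma_2 < 1$:
\[
\E[Y_n^{\gamma_1}] \;\leq\; t^{\gamma_1} + \E[Y_n^{\gamma_2}]^{\gamma_1/\gamma_2}\, \p(Y_n \geq t)^{1-\gamma_1/\gamma_2}.
\]
Taking $t = e^{a_n - \eps|a_n|}$, the hypothesis $\E[Y_n^{\gamma}] = e^{\gamma a_n(1+o(1))}$ makes $t^{\gamma_1}$ negligible against $\E[Y_n^{\gamma_1}]$ (because $|a_n| \to \infty$), and rearranging gives the claim; the surviving factor $e^{-o(|a_n|)}$ comes from the non-uniformity in $\gamma$ of the $o(1)$ in Theorem~\ref{thm:fractional_moments}.

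\textbf{Step 2: from probability to almost sure.} Here I would exploit the branching structure. Fix $\eps > 0$, put $m = m_n$ growing like a small multiple of $|a_n|$ (so $m_n \ll n$), and decompose over generation $m$:
\[
Y_n \;\geq\; \sum_{|u| = m} e^{-(1 \pm n^{-\delta}) V(u)}\, Y^{(u)}_{n-m},
\]
where, conditionally on $\calW_m$, the $Y^{(u)}_{n-m}$ are independent and, up to a harmless $(1+o(1))$ factor coming from $m_n \ll n$, each is distributed like $W_{n-m}^{\cdot,\delta}$, so Step 1 applies to them with $a_{n-m} = a_n(1+o(1))$. Let $\calA_n$ be the event that at least $e^{c|a_n|}$ of the level-$m$ vertices $u$ satisfy $0 \leq V(u) \leq \tfrac{\eps}{4}|a_n|$ --- a large number, since $\tfrac{\eps}{4}|a_n| \gg \sqrt{m_n}$ sits far above the typical position at that generation and the expected count of such vertices is $e^{\Theta(|a_n|)}$. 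On $\calA_n$, if even one of the corresponding $Y^{(u)}_{n-m}$ exceeds $e^{a_{n-m} - (\eps/4)|a_{n-m}|}$ then $Y_n \geq e^{a_n - \eps|a_n|}$ (after choosing the margins carefully), so by conditional independence and Step 1,
\[
\p\big(Y_n < e^{a_n - \eps|a_n|}\big) \;\leq\; \E\big[\1_{\calA_n}\,(1 - e^{-o(|a_n|)})^{e^{c|a_n|}}\big] + \p(\calA_n^c) \;\leq\; \exp\!\big(-e^{(c - o(1))|a_n|}\big) + \p(\calA_n^c).
\]
The first term is doubly exponentially small in $|a_n|$, hence summable in $n$ for any $|a_n| \to \infty$. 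For the second term one needs a lower-deviation estimate for the number of near-minimal particles of the branching random walk $V$ at generation $m_n$; its rate is such that $\sum_n \p(\calA_n^c) < \infty$ precisely when $|a_n| \gg (\log n)^{1/2}$, which is exactly the hypothesis. Borel--Cantelli then gives $Y_n \geq e^{a_n - \eps|a_n|}$ for all large $n$ almost surely, and letting $\eps \downarrow 0$ through a countable set yields $Y_n \geq e^{a_n(1+o(1))}$ a.s.

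\textbf{The hard part.} The delicate step is the control of $\p(\calA_n^c)$: showing that, with the required probability, $V$ has at least $e^{c|a_n|}$ particles at generation $m_n \asymp |a_n|$ below the level $\tfrac{\eps}{4}|a_n|$. This is the only genuinely non-routine probabilistic input; it can be obtained either from a truncated second-moment computation for $V$ or from lower-deviation bounds for the additive martingale $\sum_{|u|=m} e^{-sV(u)}$ in its uniformly integrable regime, and it is this estimate that calibrates the threshold $|a_n| \gg (\log n)^{1/2}$. The remaining points --- that the shifted partition functions $Y^{(u)}_{n-m}$ inherit the fractional-moment asymptotics of $W_{n-m}^{\cdot,\delta}$ when $m_n \ll n$, and that a measurable choice of the low-lying level-$m_n$ vertices makes the conditional independence from the branching property applicable --- are routine bookkeeping.
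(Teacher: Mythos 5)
Your Step 1 and the overall architecture (a polynomial-type lower bound in probability, then a decomposition at an early generation plus conditional independence and Borel--Cantelli) match the paper's strategy, but Step 2 contains a genuine gap at exactly the point you flag as ``the hard part''. Your argument hinges on the event $\calA_n$ that at least $e^{c|a_n|}$ vertices at generation $m_n\asymp|a_n|$ satisfy $0\le V(u)\le\tfrac{\eps}{4}|a_n|$, and you need $\sum_n\p(\calA_n^c)<\infty$. This is not a routine fact: because $\beta_c$ is critical, the first moment of such a count is $e^{\Theta(\eps)|a_n|}$ but the plain second moment is far larger than the square of the first (correlations along the tree), so one would need a truncated second-moment/barrier computation, and even then the resulting failure probability is typically only $e^{-c m_n}=e^{-c|a_n|}$. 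That is summable only when $|a_n|\gg\log n$, which would \emph{exclude} the main applications of the lemma ($a_n=-\tfrac12\log n$ and $a_n=(2\delta-\tfrac32)\log n$). Your assertion that this estimate ``calibrates the threshold $|a_n|\gg(\log n)^{1/2}$'' is unsupported and, I believe, not correct: in the paper that hypothesis enters only to make $\exp\{-e^{\eps|a_n|}\}$ summable (for which even $|a_n|\gg\log\log n$ would do), not through any lower-deviation estimate for the branching random walk.

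The paper avoids this difficulty entirely, and you should too. Instead of selecting low-lying vertices, decompose at generation $\tau_n=\lceil 2\eps|a_n|/\log 2\rceil$ and bound the prefactor of \emph{every} subtree uniformly by $\exp\{-(1\pm n^{-\delta})\max_{|w|=\tau_n}V(w)\}$. Since $\tau_n^{-1}\max_{|w|=\tau_n}V(w)$ converges a.s.\ to a constant, this costs only a deterministic factor $e^{-C^*\eps|a_n|(1+o(1))}$, which is absorbed into the $(1+o(1))$ in the exponent after letting $\eps\downarrow 0$. One then uses all $2^{\tau_n}$ i.i.d.\ subtrees: if $\sum_{|w|=\tau_n}Y_{n-\tau_n}(w)\le e^{a_n-\eps|a_n|}$ then every summand is, so the probability is at most $\bigl(1-e^{-\eps|a_n|}\bigr)^{2^{\tau_n}}\le\exp\{-e^{\eps|a_n|}\}$, which is doubly exponentially small and summable under the stated hypothesis. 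The only probabilistic inputs are the Paley--Zygmund (or your H\"older) bound and the law of large numbers for the rightmost particle --- no control on the number of near-minimal particles is needed. As written, your proposal does not constitute a proof of the lemma in the stated generality.
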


The lower bounds of Theorem \ref{thm:main} are therefore derived from this lemma and Theorem \ref{thm:fractional_moments} using the same sequence $a_n$ as in the last remark. Note, however, that in this case the lower bounds are almost sure rather than in probability.

\begin{proof} Let $\eps > 0$. By assumption, for any $\gamma \in (0,\frac{1}{2})$ we
have that $\E[ (W_n^{\cdot, \delta})^\gamma] \geq e^{\gamma a_n - \frac{\eps}{4}\gamma |a_n|}$
and $\E[ (W_n^{\cdot, \delta})^{2\gamma} ] \leq e^{2\gamma a_n + \frac{\eps}{4}|a_n|}$, for all $n$ sufficiently large.
By the Paley-Zygmund inequality, we have that
\begin{align}
\p \left(  W_n^{\cdot, \delta} > e^{a_n-\eps|a_n|}  \right) &\geq \Big( 1 - \frac{e^{\gamma ( a_n- \eps|a_n|)}}{\E[ (W_n^{\cdot, \delta})^\gamma]} \Big)^2 \frac{\E[(W_n^{\cdot, \delta})^\gamma]^2}{\E[ (W_n^{\cdot, \delta})^{2\gamma}]} \notag \\
 &\geq \big( 1 -  e^{-3\eps\gamma|a_n|/4} \big)^2 e^{-\frac{3\eps}{4}|a_n|} \geq e^{-\eps|a_n|} \, , \label{rough_bound}
\end{align}
for all $n$ sufficiently large. Now
define $\tau_n = \lceil\frac{2\eps|a_n|}{\log 2}\rceil$
so that $\tau_n < n$ for all $n$ sufficiently large. Then
\begin{align*}
W_n^{\pm, \delta} &= \sum_{|w| = \tau_n} e^{-(1 \pm n^{-\delta})V(w)} \!\!\!\! \sum_{\substack{v \in T(w) \\ |v| = n - \tau_n}} e^{-(1\pm n^{-\delta})(V(v) - V(w))} \\
&\geq \exp \big\{ -(1 \pm n^{-\delta}) \max_{|w| = \tau_n} V(w) \big \} \sum_{|w|=\tau_n} \!\! \sum_{\substack{v \in T(w) \\ |v| = n - \tau_n}} e^{-(1 \pm n^{-\delta})(V(v) - V(w))}.
\end{align*}
Call the rightmost sum $Y_{n-\tau_n}(w)$. Then the above implies the estimate
\begin{align*}
\bal \p \Big( W_n^{\pm, \delta} \leq e^{a_n - \eps |a_n|} \exp\{ -(1 \pm n^{-\delta}) \max_{|w|=\tau_n} V(w) \} \Big) & \leq \p \Big( \sum_{|w| = \tau_n} Y_{n-\tau_n}(w) \leq e^{a_n - \eps|a_n|} \Big) \\
 & \leq \p \left( Y_{n-\tau_n} \leq e^{a_n - \eps|a_n|}  \right)^{2^{\tau_n}} . \eal
\end{align*}
Equation \eqref{rough_bound} shows that this expression is bounded by $\exp \{ - e^{ \eps|a_n|}\}$.
Therefore, by the assumption that $|a_n| \gg (\log n)^{1/2}$ the probabilities
are summable and so by Borel-Cantelli we have that with probability one
\begin{align}\label{eqn:exp_lower_bound}
Y_n \geq e^{a_n - \eps|a_n|} \exp \big \{ -(1 \pm n^{-\delta}) \max_{|w|=\tau_n} V(w) \big	\} \,,
\end{align}
for $n$ sufficiently large. However it is well known  that there is an explicit constant $C > 0$ such that
$\frac{1}{\tau_n } \max_{|v| =\tau_n } V(v) \ra C$
with probability one (the max is the position of the rightmost particle in the system of branching random walks), so that
$$\exp \big \{ -(1 \pm n^{-\delta}) \max_{|w|=\tau_n} V(w) \big	\} = e^{C^*\eps |a_n| (1 + o(1))}$$ for some $C^* > 0$ (not depending on $\eps$). Hence with probability one $Y_n \geq e^{a_n(1+o(1))}$.
\end{proof}

\bibliographystyle{alpha}
\bibliography{critical_window}

\begin{thebibliography}{ABBS11}

\bibitem[ABBS11]{ABBS11}
E.~A{\"i}d{\'e}kon, J.~Berestycki, {\'E}.~Brunet, and Z.~Shi.
\newblock The branching brownian motion seen from its tip.
\newblock {\em arXiv:1104.3738}, 2011.

\bibitem[ABK11]{ABK3}
L.-P. Arguin, A.~Bovier, and N.~Kistler.
\newblock The extremal process of branching brownian motion.
\newblock {\em arXiv:1103.2322}, 2011.

\bibitem[ABK12]{ABK2}
L.-P. Arguin, A.~Bovier, and N.~Kistler.
\newblock Poissonian statistics in the extremal process of branching brownian
  motion.
\newblock {\em To appear in Ann. Appl. Probab.}, 2012.

\bibitem[AD12]{AD11}
F.~Aurzada and S.~Dereich.
\newblock Universality of the asymptotics of the one-sided exit problem for
  integrated processes.
\newblock {\em To appear in Ann. Inst. Henri Poincar{\'e} Probab. Stat.}, 2012.

\bibitem[AKQ12]{AKQ12}
T.~Alberts, K.~Khanin, and J.~Quastel.
\newblock The intermediate disorder regime for directed polymers in dimension
  1+1.
\newblock {\em arXiv:1202.4398v1 [math.PR]}, 2012.

\bibitem[AS10]{AidekonShi_weak}
E.~A{\"{\i}}d{\'e}kon and Z.~Shi.
\newblock Weak convergence for the minimal position in a branching random walk:
  a simple proof.
\newblock {\em Period. Math. Hungar.}, 61(1-2):43--54, 2010.

\bibitem[AS11]{AidekonShi_ratio}
E.~{A{\"i}d{\'e}kon} and Z.~{Shi}.
\newblock The {S}eneta-{H}eyde scaling for the branching random walk.
\newblock {\em arXiv:1102.0217}, 2011.

\bibitem[Big77]{Big77}
J.~D. Biggins.
\newblock Martingale convergence in the branching random walk.
\newblock {\em J. Appl. Probability}, 14(1):25--37, 1977.

\bibitem[BK04]{BK04}
A.~Bovier and I.~Kurkova.
\newblock Derrida's generalized random energy models. {II}. {M}odels with
  continuous hierarchies.
\newblock {\em Ann. Inst. H. Poincar\'e Probab. Statist.}, 40(4):481--495,
  2004.

\bibitem[Bol76]{Bol76}
E.~Bolthausen.
\newblock On a functional central limit theorem for random walks conditioned to
  stay positive.
\newblock {\em Ann. Probability}, 4(3):480--485, 1976.

\bibitem[BPP93]{BPP93}
E.~Buffet, A.~Patrick, and J.~V. Pul{\'e}.
\newblock Directed polymers on trees: a martingale approach.
\newblock {\em J. Phys. A}, 26(8):1823--1834, 1993.

\bibitem[BRV12]{BRV12}
J.~Barral, R.~Rhodes, and V.~Vargas.
\newblock Limiting laws of supercritical branching random walks.
\newblock {\em arXiv:1203.5445}, 2012.

\bibitem[DS88]{DS88}
B.~Derrida and H.~Spohn.
\newblock Polymers on disordered trees, spin glasses, and traveling waves.
\newblock {\em J. Statist. Phys.}, 51(5-6):817--840, 1988.

\bibitem[HS09]{HS09}
Y.~Hu and Z.~Shi.
\newblock Minimal position and critical martingale convergence in branching
  random walks, and directed polymers on disordered trees.
\newblock {\em Ann. Probab.}, 37(2):742--789, 2009.

\bibitem[JW11]{JW11}
T.~Johnson and E.~C. Waymire.
\newblock Tree polymers in the infinite volume limit at critical strong
  disorder.
\newblock {\em J. Appl. Probab.}, 48(3):885--891, 2011.

\bibitem[KMT76]{KMT2}
J.~Koml{\'o}s, P.~Major, and G.~Tusn{\'a}dy.
\newblock An approximation of partial sums of independent {RV}'s, and the
  sample {DF}. {II}.
\newblock {\em Z. Wahrscheinlichkeitstheorie und Verw. Gebiete}, 34(1):33--58,
  1976.

\bibitem[Koz76]{K76}
M.~V. Kozlov.
\newblock The asymptotic behavior of the probability of non-extinction of
  critical branching processes in a random environment.
\newblock {\em Teor. Verojatnost. i Primenen.}, 21(4):813--825, 1976.

\bibitem[KP76]{KP76}
J.-P. Kahane and J.~Peyri{\`e}re.
\newblock Sur certaines martingales de {B}enoit {M}andelbrot.
\newblock {\em Advances in Math.}, 22(2):131--145, 1976.

\bibitem[Mad11]{Ma11}
T.~Madaule.
\newblock Convergence in law for the branching random walk seen from its tip.
\newblock {\em arXiv:1107.2543}, 2011.

\bibitem[MO08]{MO08}
P.~M{\"o}rters and M.~Ortgiese.
\newblock Minimal supporting subtrees for the free energy of polymers on
  disordered trees.
\newblock {\em J. Math. Phys.}, 49(12):125203, 21, 2008.

\bibitem[Shr04]{Shreve04}
S.~E. Shreve.
\newblock {\em Stochastic calculus for finance. {II}}.
\newblock Springer Finance. Springer-Verlag, New York, 2004.
\newblock Continuous-time models.

\end{thebibliography}

\end{document}